\numberwithin{equation}{section}
\newtheorem{proposition}{Proposition}[section]
\newtheorem{theorem}{Theorem}[section]
\newtheorem{corollary}{Corollary}[section]
\newtheorem{lemma}{Lemma}[section]
\newtheorem{definition}{Definition}[section]
\newtheorem{remark}{Remark}[section]
\newtheorem{example}{Example}[section]
\newcommand{\WeylCh}{\mathcal{W}}
\newcommand{\diff}{\mathrm{d}}
\newcommand{\A}{\boldsymbol{a}}
\renewcommand{\AA}{\boldsymbol{A}}
\newcommand{\B}{\boldsymbol{b}}
\newcommand{\BB}{\boldsymbol{B}}
\newcommand{\Bbar}{\overline{\B}}
\renewcommand{\L}{\boldsymbol{L}}
\newcommand{\X}{\boldsymbol{x}}
\newcommand{\XX}{\boldsymbol{X}}
\newcommand{\Y}{\boldsymbol{y}}
\newcommand{\Z}{\boldsymbol{z}}
\newcommand{\ZZ}{\boldsymbol{Z}}
\newcommand{\U}{\boldsymbol{u}}
\newcommand{\Zbar}{\overline{\Z}}
\newcommand{\Xbar}{\overline{\X}}
\newcommand{\shlF}{\overline{F}}
\newcommand{\sqwF}{\mathbb{f}}
\newcommand{\sqwFBK}{\mathbb{F}}
\title{Orthogonality of spin $q$-Whittaker polynomials}
\author[M. Mucciconi]{Matteo Mucciconi}
\address[M. Mucciconi]{ 
 Department of Mathematics, National University of Singapore,
 S17, 10 Lower Kent Ridge Road, 119076, Singapore. }
\email{matteomucciconi@gmail.com}
\date{}
\begin{document}

\begin{abstract}
    The inhomogeneous spin $q$-Whittaker polynomials are a family of symmetric polynomials which generalize the Macdonald polynomials at $t=0$. In this paper we prove that they are orthogonal with respect to a variant of the Sklyanin measure on the $n$ dimensional torus and as a result they form a basis of the space of symmetric polynomials in $n$ variables. Instrumental to the proof are inhomogeneous eigenrelations, which partially generalize those of Macdonald polynomials. We also consider several special cases of the inhomogeneous spin $q$-Whittaker polynomials, which include variants of symmetric Grothendieck polynomials or spin Whittaker functions.
\end{abstract}

\maketitle

\tableofcontents

\section{Introduction}

\subsection{Context}

The spin $q$-Whittaker polynomials are a family of symmetric polynomials that can be viewed as a multi-parameter generalization of the $q$-Whittaker functions, which themselves arise as a specialization of the well-known Macdonald polynomials. They were introduced by Borodin and Wheeler \cite{BorodinWheelerSpinq} in connection with integrable lattice models and have been progressively generalized in \cite{MucciconiPetrov2020,Borodin_Korotkikh_Inhom}. In particular, they admit a combinatorial definition as partition functions of integrable vertex models, with Boltzmann weights given (in the most general form) by matrix elements of certain intertwining operators between tensor products of representations of the affine quantum group $U_q'(\widehat{\mathfrak{sl}}_n)$ \cite{Korotkikh2024}. The Yang--Baxter-type relations satisfied by the vertex weights give rise to many remarkable properties of these polynomials, including Cauchy identities, Pieri rules, and integral representations. Such symmetries connect the spin $q$-Whittaker polynomials to solvable particle systems \cite{BufetovMucciconiPetrov2018,MucciconiPetrov2020}, of which they describe the joint probability distribution; see also \cite{Korotkikh_Hidden}. Recently, Korotkikh \cite{Korotkikh2024} provided a characterization of the inhomogeneous spin $q$-Whittaker polynomials, proving that they are the unique symmetric polynomials satisfying a certain family of vanishing conditions; such characterization gives an alternative definition that bypasses the lattice model construction.

In this paper, we prove that the inhomogeneous spin $q$-Whittaker polynomials are self-orthogonal with respect to a scalar product given by an explicit measure on the $n$-dimensional complex torus; see \Cref{thm:orthogonality inhom sqW}. This scalar product is a multi-parameter extension of the ``torus scalar product'' that makes the (classical) $q$-Whittaker polynomials orthogonal \cite{Macdonald1995}, and its density can be viewed as a $q$-deformation of the Sklyanin measure \cite{Sklyanin_The_quantum_toda_Chain}. Our result, conjectured in a special case in \cite{MucciconiPetrov2020}, is in our view rather striking: while symmetries and Cauchy identities follow relatively directly from the underlying Yang--Baxter equation, more subtle properties such as linear independence or orthogonality typically demand different arguments.

The spin $q$-Whittaker polynomials are closely related to another family of symmetric functions, the spin Hall--Littlewood functions \cite{deGierWheeler2016,Borodin2014vertex,BorodinPetrov2016inhom}. Both families admit integrable-lattice-model definitions and satisfy variants of the Cauchy identities \cite{BorodinWheelerSpinq,Borodin_Korotkikh_Inhom}. Moreover, the spin Hall--Littlewood functions are themselves orthogonal under a similar torus scalar product; see \Cref{prop:orthogonality shl}. This orthogonality is deeply connected to the fact that the spin Hall--Littlewood functions form a complete set of Bethe Ansatz eigenfunctions for various self-adjoint operators, including the XXZ spin chain Hamiltonian \cite{BCPS2014}. However, it remains unclear whether the orthogonality of the spin $q$-Whittaker polynomials (which we prove in \Cref{thm:orthogonality inhom sqW}) is of the same nature as that of their spin Hall--Littlewood counterparts.

A classical special case of our orthogonality result is the orthogonality of the Schur polynomials, viewed as the characters of the unitary group $U(n)$; see, e.g., \cite{Bump_Lie_Groups}. Another notable specialization is the orthogonality of the (Archimedean) $GL(n)$ Whittaker functions \cite{Jacquet1967,Kostant_Whittaker}, which is closely connected to integrable Toda lattice theories and can be seen as a manifestation of the Harish-Chandra Plancherel theorem \cite{semenov1994quantization,Kharchev_2001}. A number of $q$-deformations of these classical results have been achieved in \cite{Ruijsenaars_relativistic,Etingof1999,GLO2010,GerasimovLebedevOblezin2011}; see also the more recent survey \cite{Semenov-Tian-Shansky_2023}. Nevertheless, to our knowledge no alternative proof for the orthogonality of $q$-Whittaker case has appeared in the literature. In Macdonald’s original treatment \cite{Macdonald1995}, the “torus” orthogonality property follows quite directly from the eigenvalue relations satisfied by Macdonald polynomials, making the derivation relatively straightforward. In the case of spin $q$-Whittaker polynomials, we establish analogous eigenrelations below (see Section~\ref{sec:eigenrelations}), but these alone do not suffice to characterize the polynomials: indeed, the eigenvalues depend only on the first and last parts of the indexing partitions. Thus the orthogonality of spin $q$-Whittaker polynomials does \emph{not} simply reduce to these eigenrelations, and additional ideas are required. One may speculate that a $t$-deformation of the spin $q$-Whittaker theory could potentially reconcile these issues by bringing it closer to the full Macdonald framework, although no such general theory yet exists.

\subsection{Inhomogeneous spin $q$-Whittaker polynomials} \label{subs:inhom sqW}
In this paper we will deal with multivariate and multi-parametric functions. To denote lists of variables and parameters, we will use boldface font notation as
\begin{equation}
    \Z_N=(z_1,z_2,\dots,z_N).
\end{equation}
For instance for a function of $N$ variables $f(z_1,\dots,z_N)$ we will use the shorthand $f(\Z_N)$. We will also use the convention that
\begin{equation}
    \overline{z}= z^{-1} 
    \qquad
    \text{and}
    \qquad
    \Zbar_N = (\overline{z}_1,\dots,\overline{z}_N) =  (z_1^{-1},\dots,z_N^{-1})
\end{equation}
For a sequence of parameters $\Z_N$ we define the shift operator $\tau$, which drops the first element of the sequence, as
\begin{equation}
    \tau \Z_N = (z_2,\dots,z_N)
\end{equation}
and naturally, for any $k\in \{1,\dots,N-1\}$, we set
\begin{equation} \label{eq:tau k}
    \tau^k \Z_N = (z_{k+1},\dots, z_N).
\end{equation}
At times it will be convenient to not specify the length of a sequence, which we may think as infinite, in which case we drop the subscript from the notation $\Z=(z_1,z_2,\dots)$. Throughout the text, the lists of complex parameters $\A,\B$ will be called \emph{inhomogeneity parameters} and will always be assumed to satisfy
\begin{equation} \label{eq:conditions parameters}
    |a_i|,|b_i| <1 \qquad \text{for all } i\ge 1.
\end{equation}
The conditions in \eqref{eq:conditions parameters} could be relaxed in various results of this paper; however, for the sake of a cleaner presentation, we have opted not to pursue the most general form.

\medskip

The families of symmetric function with which we deal are often labelled by integer partitions, whose set is denoted by $\mathbb{Y}$. We also define $\mathbb{Y}_n$ to be the subset of integer partitions of length at most $n$. The next definition uses the notion of $q$-Pochhammer symbols and interlacing partitions which we recall below in \Cref{subs:notation}.

\begin{definition}
    For $\lambda \in \mathbb{Y}_n$ and $\mu \in \mathbb{Y}_{n-1}$, define the skew spin $q$-Whittaker polynomial ({\' a} la Borodin-Korotkikh \cite{Borodin_Korotkikh_Inhom})
    \begin{equation} \label{eq:skew sqWBK}
            \sqwFBK_{\lambda/\mu} (x| \A_{n}, \B_{n}) = \mathbf{1}_{\mu \preceq \lambda} x^{|\lambda|-|\mu|} \prod_{r = 1}^n \frac{(a_r/x;q)_{\lambda_r-\mu_r} (x b_r;q)_{\mu_r-\lambda_{r+1}} (q;q)_{\lambda_r-\lambda_{r+1}} }{(q;q)_{\lambda_r-\mu_r} (q;q)_{\mu_r-\lambda_{r+1}} (a_{r} b_r;q)_{\lambda_r-\lambda_{r+1}} }.
    \end{equation}
    We define the $n$-variate spin $q$-Whittaker polynomials through branching rules
    \begin{equation} \label{eq:branching rule}
        \sqwFBK_{\lambda} (\X_n|\A_n,\B_n) = \sum_{\mu \preceq \lambda} \sqwFBK_\mu ( \X_{n-1} | \tau \A_n , \B_{n-1} ) \sqwFBK_{\lambda / \mu} (x_n|\A_n,\B_n).
    \end{equation}    
\end{definition}

\begin{remark}
    The polynomials $\sqwFBK_{\lambda} (\X_n|\A_n,\B_n)$ are symmetric in the variables $\X_n$ as proven in \cite{Borodin_Korotkikh_Inhom}; see \Cref{rem:matching notation sqw} to match the above definition with that used in the paper \cite{Borodin_Korotkikh_Inhom}.
\end{remark}

\begin{remark} \label{rem:skew sqWBK}
    The polynomials $\sqwFBK_\lambda$ also have an $n$-variate skew variant, $\sqwFBK_{\lambda/\mu}(\X_n|\A_n,\B_n)$, which can be defined adapting the branching rule \eqref{eq:branching rule}; see \cite{Borodin_Korotkikh_Inhom}. However, since the main results of this paper do not apply to them, we do not discuss them further.    
\end{remark}

In this paper, rather than the Borodin-Korotkikh polynomials $\sqwFBK_\lambda$, we will work with a variant of the spin $q$-Whittaker polynomials defined next.

\begin{definition} \label{def:sqW}
    For $\lambda \in \mathbb{Y}_n$ and $\mu \in \mathbb{Y}_{n-1}$, define
    \begin{equation}
        \sqwF_{\lambda/\mu}(x|\A_{n-1}, \B_{n-1}) = \sqwFBK_{\lambda/\mu}(x|\A_n, \B_n) \big|_{a_n=0}
    \end{equation}    
    and
    \begin{equation}
        \sqwF_\lambda(\X_n|\A_{n-1}, \B_{n-1}) = \sqwFBK_\lambda(\X_n|\A_n, \B_n) \big|_{a_n=0}.
    \end{equation}
\end{definition}

\begin{remark}
    Notice that $\sqwFBK_\lambda(\X_n|\A_n, \B_n)\big|_{a_n=0}$, as a function of $\B_n$ only depends of $b_1,\dots,b_{n-1}$, justifying the notation adopted for the functions $\sqwF$. This is because the $n$-th term in the product in the right hand side of \eqref{eq:skew sqWBK} is
    \begin{equation}
        \frac{(a_n/x;q)_{\lambda_n-\mu_n} (x b_n;q)_{\mu_n-\lambda_{n+1}} (q;q)_{\lambda_n-\lambda_{n+1}} }{(q;q)_{\lambda_n-\mu_n} (q;q)_{\mu_n-\lambda_{n+1}} (a_n b_n;q)_{\lambda_n-\lambda_{n+1}} } = \frac{(a_n/x;q)_{\lambda_n} }{ (a_n b_n;q)_{\lambda_n} },
    \end{equation}
    since, by assumptions, $\mu_n=\lambda_{n+1}=0$.
\end{remark}

At first glance, the \Cref{def:sqW} of the functions $\sqwF$ might seem irrelevant. However, setting the inhomogeneity parameter $a_n$ to zero endows the $n$-variate spin $q$-Whittaker polynomials with additional symmetries and desirable properties. In this paper, we collect several such properties, including the \emph{shift property} (\Cref{prop:shift property}), the \emph{reverse symmetry} (\Cref{prop:reverse symmetry}), and the eigenrelations (\Cref{thm:eigenrelation sqW lambda n}, \Cref{thm:eigenrelation sqW lambda 1}).
We will also see, in \Cref{prop:relation sqW and sqWBK}, that one can recover the polynomials $\sqwFBK$ as a special case of $\sqwF$ as
\begin{equation}
    \sqwFBK_\lambda(\X_n|\A_n, \B_n) = \sqwF_\lambda(\X_{n+1}|\A_n,\B_n) \big |_{x_{n+1}=0},
\end{equation}
so neither family of polynomials $\{ \sqwF_\lambda (\X_n|\A_{n-1},\B_{n-1}) \}$, $\{ \sqwFBK_\lambda (\X_n| \A_n,\B_n) \}$ is more general than the other if we allow the parameter $n$ counting the number of variables to vary.

\subsection{Main results}
Let $\mathbb{T}$ denote the complex circle $\{ z \in \mathbb{C} : |z|=1 \}$. For $\Z_n\in \mathbb{T}^n$ and parameters $\A_{n-1},\B_{n-1}$, as in \eqref{eq:conditions parameters} we define the function
\begin{equation} \label{eq: delta A B}
    \Delta (\Z_n|\A_{n-1},\B_{n-1}) = \frac{ H_q( \Zbar_n  ; \A_{n-1}) H_q( \Z_n ; \B_{n-1} ) \prod_{k=1}^{n-1} (a_k b_k;q)_\infty  }{ H_q( \A_{n-1} ; \B_{n-1} ) } \Delta (\Z_n),
\end{equation}
where
\begin{equation} \label{eq:H}
    H_q(\X_N; \Y_M) = \prod_{i=1}^N \prod_{j=1}^M \frac{1}{(x_i y_j;q)_\infty}
\end{equation}
and
\begin{equation} \label{eq:Delta}
    \Delta(\Z_n) = \frac{1}{n!} \prod_{1 \le i \neq j \le n} (z_i/z_j;q)_\infty \prod_{i=1}^n \frac{1}{2 \pi \mathrm{i} z_i}.
\end{equation}
The function $\Delta(\Z_n|\A_{n-1}, \B_{n-1})$ is the density function of a measure on the $n$-dimensional torus and we denote the corresponding scalar product as
\begin{equation} \label{eq:sqW scalar product}
    \llangle f \,|\, g \rrangle_n = \int_{\mathbb{T}^n} f(\Z_n) g(\Zbar_n) \Delta (\Z_n|\A_{n-1},\B_{n-1}) \mathrm{d} z_1 \cdots \mathrm{d} z_n.
\end{equation}
With a slight abuse of notation we will at times keep the dependence on the variables $\Z_n$ of functions $f,g$ as
\begin{equation}
    \llangle f(\Z_n) \,|\, g(\Z_n) \rrangle_n = \llangle f \,|\, g \rrangle_n.
\end{equation}
Since the torus integration variables will always be denoted by $\Z$, we hope that this causes no confusion.

\medskip

Our main result states that the spin $q$-Whittaker polynomials $\sqwF$ are self-orthogonal with respect to the scalar product $\llangle\,| \, \rrangle$.
\begin{theorem} \label{thm:orthogonality inhom sqW}
    Fix $n \in \mathbb{N}$. For all $\lambda,\mu \in \mathbb{Y}_n$, we have
    \begin{equation} \label{eq:orthogonality sqW}
        \left\llangle \sqwF_\lambda(\Z_n|\A_{n-1},\B_{n-1}) \middle| \sqwF_\mu (\Z_n| \B_{n-1} , \A_{n-1} ) \right\rrangle_n = \mathbf{1}_{\lambda=\mu} \mathsf{c}_n(\lambda | \A_{n-1}, \B_{n-1} ) ,
    \end{equation}
    where
    \begin{equation} \label{eq:c AB}
        \mathsf{c}_n(\lambda | \A_{n-1}, \B_{n-1} ) = \prod_{k=1}^{n-1} \frac{(a_k b_k;q)_\infty}{(q;q)_\infty} \frac{(q;q)_{\lambda_k-\lambda_{k+1}}}{(a_k b_k;q)_{\lambda_k-\lambda_{k+1}} }.
    \end{equation}
\end{theorem}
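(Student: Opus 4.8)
The plan is to mimic Macdonald's strategy for the torus orthogonality of the classical $q$-Whittaker polynomials, but to compensate for the fact that the eigenrelations available here (\Cref{thm:eigenrelation sqW lambda n}, \Cref{thm:eigenrelation sqW lambda 1}) only see the first and last parts $\lambda_1,\lambda_n$ of the partition. First I would set up an induction on the number of variables $n$. The base case $n=1$ is a direct computation: $\sqwF_\lambda(z_1)=z_1^{\lambda_1}$, the density $\Delta(z_1\mid\varnothing,\varnothing)$ reduces to $\tfrac{1}{2\pi\mathrm{i}z_1}$, and $\llangle z_1^{\lambda_1}\mid z_1^{-\mu_1}\rrangle_1=\mathbf 1_{\lambda_1=\mu_1}$, matching $\mathsf c_1\equiv 1$. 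For the inductive step the key device is the branching rule \eqref{eq:branching rule}: writing $\sqwF_\lambda(\Z_n)=\sum_{\nu\preceq\lambda}\sqwF_\nu(\Z_{n-1}\mid\tau\A,\B)\,\sqwF_{\lambda/\nu}(z_n\mid\A,\B)$ and similarly for $\sqwF_\mu$ with swapped parameters, I would try to integrate out the variable $z_n$ first (or, using the reverse symmetry \Cref{prop:reverse symmetry}, the variable $z_1$), reducing the $n$-dimensional integral to an $(n-1)$-dimensional one to which the inductive hypothesis applies.

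The technical heart is therefore a ``one-variable integration'' identity: one must show that integrating the product of two skew factors $\sqwF_{\lambda/\nu}(z_n\mid\A,\B)$ and $\sqwF_{\mu/\kappa}(\bar z_n\mid\B,\A)$ against the appropriate ratio of density functions $\Delta(\Z_n\mid\cdots)/\Delta(\Z_{n-1}\mid\cdots)$ — essentially a product of the $H_q$ factors involving $z_n$ together with the Vandermonde cross-terms $\prod_{i<n}(z_i/z_n;q)_\infty(z_n/z_i;q)_\infty$ — collapses, after contour integration, to something proportional to $\mathbf 1_{\nu=\kappa}$ times an explicit constant. This is where the Cauchy-type identities and Pieri rules for the spin $q$-Whittaker polynomials (coming from the Yang--Baxter equation) should enter: the residue expansion of the $z_n$-contour integral over the complex circle picks up poles at $z_n$ equal to (shifts of) the $a_k$'s or $z_i$'s, and resumming these residues should reproduce a skew Cauchy identity. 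An alternative, and perhaps cleaner, route is to avoid iterating one variable at a time and instead invoke the $n$-variable (skew) Cauchy identity for $\sqwF$ together with the eigenrelations: expand $\llangle\sqwF_\lambda\mid\sqwF_\mu\rrangle$ by the eigenrelation for $\lambda_1$ and $\lambda_n$, deduce orthogonality for partitions with distinct $(\lambda_1,\lambda_n)$, and then handle the remaining degeneracy — partitions sharing the same first and last part — by a separate argument (for instance, peeling off a column/row and inducting on $|\lambda|$, or using the shift property \Cref{prop:shift property} to relate $\lambda$ and $\lambda-(1^n)$).

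The main obstacle, flagged already in the introduction, is exactly this degeneracy: the eigenrelations alone cannot separate, say, $\lambda=(3,1)$ from $\lambda=(3,2,\dots,2,1)$-type partitions with the same extremes, so the proof cannot be the near-triviality it is in Macdonald's book. I expect the resolution to require the full strength of the branching/Cauchy structure — most plausibly a self-contained evaluation of the single-variable integral above, proved by residue calculus and a $q$-binomial (or $q$-Gauss ${}_2\phi_1$) summation — rather than any soft argument. A secondary technical point is bookkeeping the parameter shifts: the density on the left carries $(\A_{n-1},\B_{n-1})$ while on the right the roles of $\A$ and $\B$ are interchanged, and under the branching rule one variable's worth of parameters gets shifted by $\tau$; one must check that the constant $\mathsf c_n(\lambda\mid\A,\B)$ factorizes compatibly, i.e. that $\mathsf c_n(\lambda\mid\A_{n-1},\B_{n-1})$ equals $\mathsf c_{n-1}(\nu\mid\tau\A_{n-1},\B_{n-2})$ times the $k=n-1$ factor produced by the $z_n$-integration, which the product form \eqref{eq:c AB} makes believable but which still needs to be verified line by line. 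Convergence and the legitimacy of interchanging sum and integral on the torus (guaranteed by $|a_i|,|b_i|<1$) should be remarked on but present no real difficulty.
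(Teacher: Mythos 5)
You correctly isolate several of the right ingredients: the adjointness of the conjugated $q$-difference operators, the eigenrelations for $\lambda_1$ and $\lambda_n$, the resulting partial orthogonality when $\lambda_1\neq\mu_1$ or $\lambda_n\neq\mu_n$ (this is precisely \Cref{lem:adjoint} and \Cref{lem:prelim orthogonality}), the shift property to handle partitions of length $n$ at the very end, and of course the central obstacle — the eigenrelations cannot separate partitions sharing the same first and last parts. But your two proposed ways of closing that gap are not what the paper does, and neither one works as stated. The ``one-variable integration'' route is obstructed precisely by the term $\prod_{1\le i\neq j\le n}(z_i/z_j;q)_\infty$ in $\Delta(\Z_n)$, which couples all variables; you acknowledge the cross-terms but do not explain how the $z_n$-integral then collapses, and there is no reason to expect a clean $q$-Gauss-type evaluation (the classical $q$-Whittaker torus orthogonality is not proved this way in Macdonald either). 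The ``peel off a column and induct on $|\lambda|$'' alternative does not touch the real degeneracy: after reducing length-$n$ partitions to length-$(n-1)$ ones via the shift property, one is still stuck with pairs like $\lambda=(3,1,0)$ and $\mu=(3,2,0)$ that share extremes, and no row/column peeling or shift helps.

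What the paper actually does to break the degeneracy is a global linear-algebra argument built on the Cauchy identities rather than a local integration. Using the Cauchy identity for the pair $(\sqwF,\sqwFBK^*)$, the dual Cauchy identity with spin Hall--Littlewood functions, and the orthogonality of $q$-Whittaker polynomials, the paper proves (\Cref{lem:evaluation scalar product kernels}, \Cref{lem:integral rep sqW}) an integral representation of $\sqwFBK_\lambda(\Y_m|\B_m,\A_m)$ as a scalar product against a Cauchy-type kernel, hence an expansion
\begin{equation*}
\sqwFBK_\lambda(\Y_m|\B_m,\A_m)=\sum_{\nu}\beta_{\lambda,\nu}\,\sqwFBK_\nu(\Y_m|\B_m,\A_m),\qquad \beta_{\lambda,\nu}\propto \left\llangle \sqwF_\nu\,\middle|\,\sqwF_\lambda\right\rrangle_n,
\end{equation*}
which, after the partial orthogonality of \Cref{lem:prelim orthogonality}, is a \emph{finite} sum over $\nu\in\mathrm{Box}(\lambda_1,m)$ (\Cref{lem:finite expansion inhom sqw}). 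In parallel, \Cref{lem:expansion monomial inhom sqw} records that $\sqwFBK_\lambda$ expands in monomial symmetric polynomials with coefficients $\alpha_{\lambda,\mu}$ analytic in the parameters and reducing at $a_i=b_i=0$ to the triangular $q$-Whittaker-to-monomial change of basis. This produces the finite matrix identity $\alpha=\beta\alpha$; since $\alpha$ is invertible at $a=b=0$ and each entry is analytic, $\alpha$ stays invertible on a neighborhood, forcing $\beta=\mathrm{Id}$ there, and analytic continuation extends $\beta=\mathrm{Id}$ to all $|a_i|,|b_i|<1$. That gives orthogonality for $\lambda,\mu$ of length $\le n-1$; the shift property then handles length $n$. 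This matrix-plus-analyticity step is the genuinely new idea your proposal is missing, and without it the degeneracy remains unresolved.
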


Immediate important consequences of \Cref{thm:orthogonality inhom sqW} are given next. In the following we denote by $\mathsf{Sym}_n$ the linear space of symmetric polynomials in $n$ variables.

\begin{theorem} \label{thm:sqW are a basis}
    Fix $n \in \mathbb{N}$. The family of symmetric polynomials $\{ \sqwF_\lambda(\X_n|\A_{n-1},\B_{n-1}) \}_{\lambda \in \mathbb{Y}_n}$ is a linear basis of $\mathsf{Sym}_n$.
\end{theorem}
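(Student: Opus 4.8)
The plan is to deduce both halves of the statement — linear independence and spanning — from \Cref{thm:orthogonality inhom sqW}, together with one combinatorial input about the monomial support of $\sqwF_\lambda$; this is essentially the route used to pass from ``torus'' orthogonality to a basis in Macdonald's treatment.

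\textbf{Linear independence.} First I would note that the constants $\mathsf{c}_n(\lambda|\A_{n-1},\B_{n-1})$ in \eqref{eq:c AB} are all nonzero: by \eqref{eq:conditions parameters} each of the symbols $(a_kb_k;q)_\infty$, $(q;q)_\infty$, $(q;q)_{\lambda_k-\lambda_{k+1}}$, $(a_kb_k;q)_{\lambda_k-\lambda_{k+1}}$ is a (convergent) product of factors $1-w$ with $|w|<1$, hence does not vanish. Next, $\llangle\,\cdot\,|\,\cdot\,\rrangle_n$ of \eqref{eq:sqW scalar product} is a bona fide bilinear form on $\mathsf{Sym}_n\times\mathsf{Sym}_n$, the density $\Delta(\Z_n|\A_{n-1},\B_{n-1})$ being continuous on the compact torus $\mathbb{T}^n$ (the products defining $H_q$ converge with denominators bounded away from $0$, and $\prod_{i\neq j}(z_i/z_j;q)_\infty$ is continuous). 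Hence if $\sum_{\lambda\in\mathbb{Y}_n}c_\lambda\,\sqwF_\lambda(\X_n|\A_{n-1},\B_{n-1})=0$ with only finitely many nonzero $c_\lambda$, pairing with $\sqwF_\mu(\Z_n|\B_{n-1},\A_{n-1})$ and using \Cref{thm:orthogonality inhom sqW} gives $c_\mu\,\mathsf{c}_n(\mu|\A_{n-1},\B_{n-1})=0$, so all $c_\mu=0$. Thus $\{\sqwF_\lambda(\X_n|\A_{n-1},\B_{n-1})\}_{\lambda\in\mathbb{Y}_n}$ is linearly independent in $\mathsf{Sym}_n$.

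\textbf{Spanning.} The crucial point is that, although $\sqwF_\lambda$ is inhomogeneous, it uses each individual variable only to degree at most $\lambda_1$; equivalently, its monomial support lies in the $n\times\lambda_1$ box. Granting this, fix $N\ge 0$ and let $W_N\subseteq\mathsf{Sym}_n$ be the span of $\{m_\nu:\nu\in\mathbb{Y}_n,\ \nu_1\le N\}$ — the symmetric polynomials in which every variable has degree $\le N$ — a finite-dimensional space with $\dim W_N=\#\{\nu\in\mathbb{Y}_n:\nu_1\le N\}$. By the box bound, $\sqwF_\lambda(\X_n|\A_{n-1},\B_{n-1})\in W_N$ for every $\lambda$ with $\lambda_1\le N$, and the number of such $\lambda$ is exactly $\dim W_N$; being linearly independent, these polynomials form a basis of $W_N$, so in particular every $m_\nu$ with $\nu_1\le N$ lies in their span. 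Since $N$ is arbitrary and $\{m_\nu\}_{\nu\in\mathbb{Y}_n}$ is a basis of $\mathsf{Sym}_n$, the $\sqwF_\lambda$ span $\mathsf{Sym}_n$; together with linear independence this is the theorem.

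\textbf{The main obstacle} is the box bound $\deg_{x_i}\sqwF_\lambda(\X_n|\A_{n-1},\B_{n-1})\le\lambda_1$. I would prove it by induction on $n$ via the branching rule for $\sqwF$ inherited from \eqref{eq:branching rule} under the specialization of \Cref{def:sqW}. The base case and the one-variable skew weights are read off \eqref{eq:skew sqWBK}: after multiplying by the prefactor $x^{|\lambda|-|\mu|}$ and using $\mu\preceq\lambda$, the factor $\prod_r(a_r/x;q)_{\lambda_r-\mu_r}$ contributes only nonpositive powers of $x$ while $\prod_r(xb_r;q)_{\mu_r-\lambda_{r+1}}$ contributes only nonnegative ones, so $\sqwFBK_{\lambda/\mu}(x|\A_n,\B_n)$ is a genuine polynomial in $x$ whose degree telescopes to $\sum_r(\lambda_r-\mu_r)+\sum_r(\mu_r-\lambda_{r+1})=\lambda_1$, and this persists after setting $a_n=0$. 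Then along any interlacing chain $\varnothing\preceq\lambda^{(1)}\preceq\cdots\preceq\lambda^{(n)}=\lambda$ the variable $x_k$ enters $\sqwF_\lambda$ only through the skew weight indexed by $\lambda^{(k)}/\lambda^{(k-1)}$, to degree $\le(\lambda^{(k)})_1\le\lambda_1$, since first parts are nondecreasing along the chain. This step is genuinely the crux: unlike in the homogeneous $q$-Whittaker case, $\sqwF_\lambda$ is not graded and can contain monomials of total degree well beyond $|\lambda|$, so no dominance- or degree-triangularity against the $m_\nu$ is available, and the $\lambda_1$-box bound is exactly what replaces it in the dimension count.
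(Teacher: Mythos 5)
Your proof is correct, and it coincides with what the paper presents as an ``alternative proof'' (credited in a footnote to Travis Scrimshaw): linear independence is read off \Cref{thm:orthogonality inhom sqW}, and spanning follows from the box bound $\deg_{x_i}\sqwF_\lambda(\X_n|\A_{n-1},\B_{n-1})\le\lambda_1$ together with a dimension count over the finite-dimensional spaces $\mathrm{Span}\{m_\nu\}_{\nu\in\mathrm{Box}(L,n)}$. The paper's \emph{primary} route to spanning is different: it is \Cref{lem:sqw generate}, which deduces directly from the Pieri rule \eqref{eq:pieri rule} that every product $e_\nu(\X_n)$ of elementary symmetric polynomials, and hence all of $\mathsf{Sym}_n$, lies in the span of the spin $q$-Whittaker polynomials. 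That route avoids the box bound and the counting argument, at the cost of invoking the Pieri structure rather than only the branching rule. Within the alternative route, your treatment of the box bound is slightly more self-contained than the paper's: you prove $\deg_{x_k}\sqwF_\lambda\le\lambda_1$ directly, by telescoping the degree of the single-variable skew factor $\sqwF_{\lambda/\mu}(x|\A_{n-1},\B_{n-1})$ to $\lambda_1$ and noting that first parts are nondecreasing along an interlacing chain, whereas the paper cites \Cref{lem:expansion monomial inhom sqw} (which is stated for $\sqwFBK$) together with the shift property of \Cref{prop:shift property}. Both arguments are sound; your preliminary checks — nonvanishing of $\mathsf{c}_n(\lambda|\A_{n-1},\B_{n-1})$ and well-definedness of the pairing on $\mathbb{T}^n$ — are elementary and correct, though the paper does not spell them out.
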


\begin{theorem} \label{thm:sqWBK are a basis}
    Fix $n \in \mathbb{N}$. The family of symmetric polynomials $\{ \sqwFBK_\lambda(\X_n|\A_{n},\B_{n}) \}_{\lambda \in \mathbb{Y}_n}$ is a linear basis of $\mathsf{Sym}_n$.
\end{theorem}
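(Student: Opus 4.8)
The plan is to deduce \Cref{thm:sqWBK are a basis} from \Cref{thm:sqW are a basis} together with the relation
\begin{equation} \label{eq:recover sqWBK from sqW}
    \sqwFBK_\lambda(\X_n|\A_n,\B_n) = \sqwF_\lambda(\X_{n+1}|\A_n,\B_n)\big|_{x_{n+1}=0},
\end{equation}
which is \Cref{prop:relation sqW and sqWBK} in the excerpt. First I would observe that $\dim \mathsf{Sym}_n = |\mathbb{Y}_n|$, so it suffices to prove that the family $\{\sqwFBK_\lambda(\X_n|\A_n,\B_n)\}_{\lambda\in\mathbb{Y}_n}$ is linearly independent. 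Suppose we have a finite linear relation $\sum_{\lambda\in\mathbb{Y}_n} c_\lambda\,\sqwFBK_\lambda(\X_n|\A_n,\B_n) = 0$ in $\mathsf{Sym}_n$. Applying \eqref{eq:recover sqWBK from sqW}, this says that the symmetric polynomial $F(\X_{n+1}) := \sum_\lambda c_\lambda\,\sqwF_\lambda(\X_{n+1}|\A_n,\B_n)$ in $n+1$ variables vanishes on the hyperplane $x_{n+1}=0$, i.e. $F(\X_{n+1})$ is divisible by $x_{n+1}$, and by symmetry divisible by $x_1\cdots x_{n+1}$.

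The key step is then to rule out such a relation. Since $\{\sqwF_\lambda(\X_{n+1}|\A_n,\B_n)\}_{\lambda\in\mathbb{Y}_{n+1}}$ is a basis of $\mathsf{Sym}_{n+1}$ by \Cref{thm:sqW are a basis}, I would expand $x_1\cdots x_{n+1}\cdot G(\X_{n+1})$ for arbitrary $G$ in this basis; but more directly, the cleanest route is to use the branching rule \eqref{eq:branching rule}: the skew polynomial $\sqwF_{\lambda/\mu}(x|\cdots)$ at $x=0$ forces $\lambda=\mu$ (the factor $x^{|\lambda|-|\mu|}$ vanishes unless $|\lambda|=|\mu|$, hence $\lambda=\mu$ since $\mu\preceq\lambda$), and the resulting coefficient is $1$. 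Thus $\sqwF_\lambda(\X_{n+1}|\A_n,\B_n)\big|_{x_{n+1}=0} = \sqwF_\lambda(\X_n|\tau\A_{n+1}\text{-type args})$ reproduces exactly $\sqwFBK_\lambda(\X_n|\A_n,\B_n)$ when $\lambda\in\mathbb{Y}_n$, and vanishes identically when $\lambda$ has length exactly $n+1$. Consequently, restricting $F$ to $x_{n+1}=0$ gives $\sum_{\lambda\in\mathbb{Y}_n} c_\lambda\,\sqwFBK_\lambda(\X_n|\A_n,\B_n)$, which by hypothesis is $0$; but this restriction map, composed with the expansion in the $\sqwF$-basis of $\mathsf{Sym}_{n+1}$, shows $c_\lambda = 0$ for all $\lambda\in\mathbb{Y}_n$, because those $c_\lambda$ are precisely the coordinates of $F$ in the basis vectors that survive the restriction.

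I expect the main (and only real) obstacle to be bookkeeping: making sure that setting $x_{n+1}=0$ in $\sqwF_\lambda(\X_{n+1}|\A_n,\B_n)$ genuinely yields $\sqwFBK_\lambda$ for $\lambda\in\mathbb{Y}_n$ and the zero polynomial for partitions of length $n+1$, so that the linear combination collapses correctly and no cancellation between the two sets of indices can occur. This is exactly the content of \Cref{prop:relation sqW and sqWBK} (which I am entitled to assume), combined with the vanishing of $\sqwF_\lambda$ for $\ell(\lambda)=n+1$ at $x_{n+1}=0$ that follows from the $x^{|\lambda|-|\mu|}$ prefactor in \eqref{eq:skew sqWBK}. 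Once that is in place, linear independence of the $\sqwFBK_\lambda$ follows from that of the $\sqwF_\lambda$, and a dimension count upgrades independence to "basis", completing the proof.
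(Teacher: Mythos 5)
Your proposal follows essentially the same strategy as the paper: lift the putative relation to $n+1$ variables via $\Cref{prop:relation sqW and sqWBK}$, observe that the resulting $F(\X_{n+1})$ is divisible by $x_1\cdots x_{n+1}$, and use the $\sqwF$-basis of $\mathsf{Sym}_{n+1}$ together with the shift property to reach a contradiction. Two points deserve attention, however.

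First, the step where you pivot from ``expand $x_1\cdots x_{n+1}\cdot G(\X_{n+1})$ in the $\sqwF$-basis'' to the ``cleaner'' restriction argument actually drops the ingredient that closes the loop. Knowing that $\sqwF_\lambda\big|_{x_{n+1}=0}$ vanishes for $\ell(\lambda)=n+1$ only gives the inclusion $\mathrm{Span}\{\sqwF_\lambda:\ell(\lambda)=n+1\}\subseteq\ker R$ where $R$ is restriction to $x_{n+1}=0$; what you need is the \emph{reverse} inclusion, i.e.\ that every element of $\ker R=x_1\cdots x_{n+1}\,\mathsf{Sym}_{n+1}$ lies in $\mathrm{Span}\{\sqwF_\lambda:\ell(\lambda)=n+1\}$. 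That is exactly what the shift property \eqref{eq:shift property} applied to a basis expansion of $\tilde F=F/(x_1\cdots x_{n+1})$ provides, and it is how the paper concludes: $F$ would then have two $\sqwF$-expansions, one supported on $\ell(\lambda)\le n$ and one on $\ell(\lambda)=n+1$, forcing both to vanish. As written, ``those $c_\lambda$ are precisely the coordinates of $F$ in the basis vectors that survive the restriction'' is circular; a linear map can annihilate a combination of basis vectors that individually map to nonzero elements.

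Second, the assertion ``$\dim\mathsf{Sym}_n=|\mathbb{Y}_n|$, so it suffices to prove linear independence'' is not valid as stated, since both quantities are infinite and a cardinality match gives nothing. To upgrade independence to ``basis'' you need either the graded/filtered comparison (via \Cref{lem:expansion monomial inhom sqw}, each $\sqwFBK_\lambda$ with $\lambda\in\mathrm{Box}(L,n)$ lies in the finite-dimensional space $\mathrm{Span}\{m_\mu\}_{\mu\in\mathrm{Box}(L,n)}$ of the same dimension, so linear independence in each box implies spanning) or a direct spanning argument; the paper establishes spanning via the Pieri rule in \Cref{lem:sqw generate}. Either route is available, but it must be made explicit.
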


\begin{remark}
    In \cite{Korotkikh2024}, the polynomials $\sqwFBK_\lambda(\X_n|\A_{n},\B_{n})$ were shown to be the the unique family of symmetric polynomials satisfying certain vanishing conditions. As a result they were proven to be a basis of the space
    \begin{equation}
        \mathrm{Span} \left\{ G_\mu(\X_n) \right\}_{\mu \in \mathbb{Y}_n},
        \qquad
        \text{with}
        \qquad
        G_\mu(\X_n) = \prod_{i=1}^n \prod_{r \ge 1} \frac{(x_i b_r ;q)_{\mu_r-\mu_{r+1} } }{ (a_i b_r ;q)_{\mu_r-\mu_{r+1} } }.
    \end{equation}
    Such result would give an alternative proof of \Cref{thm:sqW are a basis} and  \Cref{thm:sqWBK are a basis} (but not of \Cref{thm:orthogonality inhom sqW}).    
\end{remark}

In \Cref{sec:special cases} we consider special cases of the spin $q$-Whittaker polynomials and the corresponding specialization of \Cref{thm:orthogonality inhom sqW}. Setting $q=a_1=\cdots=a_{n-1}=0$, the polynomials $\sqwF$ become an inhomogeneous variant of the symmetric Grothendieck polynomials, which arise in the study of K-theory of Grassmannians \cite{Lascoux_Schutzemberger_Grothendieck,Fomin_Kirillov_Grothendieck,Buch_Grothendieck}. In this case the orthogonality relation \eqref{eq:orthogonality sqW} turns into a torus scalar product orthonormality for families of symmetric Grothendieck polynomials, which appears to be new; see \Cref{thm:orthogonality Grothendieck polys}.

\subsection{Layout of the paper} In \Cref{sec:symmetric functions} we define the families of symmetric functions that we use throughout the paper, stating some of their (known or new) properties. In \Cref{sec:eigenrelations} we prove that the spin $q$-Whittaker polynomials are eigenvectors of certain $q$-difference operators. In \Cref{sec:proofs}, using results of the previous Sections, we give the proofs of \Cref{thm:orthogonality inhom sqW}, \Cref{thm:sqW are a basis} and \Cref{thm:sqWBK are a basis}. In \Cref{sec:special cases} we consider three particular cases of the spin $q$-Whittaker polynomials, the symmetric Grothendieck polynomials, the interpolation $q$-Whittaker polynomials and the spin Whittaker functions, specializing the result of \Cref{thm:orthogonality inhom sqW}. We conclude in \Cref{sec:conclusion}, addressing the original motivation for \Cref{thm:orthogonality inhom sqW} and connections with integrable probabilistic models.

\subsection{Notation} \label{subs:notation}

We have already defined the set of integer partitions $\mathbb{Y}$ and the set of integer partitions with length at most $n$ denoted by $\mathbb{Y}_n$. A partition $\lambda \in \mathbb{Y}_n$ can be thought as a list $\lambda=\lambda_1 \ge \cdots \ge \lambda_n \ge 0$ or as a list of arbitrary length appending a number of $0$'s after the $n$-th entry. We will denote
\begin{equation}
    |\lambda| = \lambda_1 +\lambda_2 + \cdots.
\end{equation}
The interlacing relation $\preceq$ between partitions is defined as
\begin{equation}
    \mu \preceq \lambda \qquad \Leftrightarrow  \qquad \lambda_i \ge \mu_i \ge \lambda_{i+1} \qquad \text{for all }i \ge 1.
\end{equation}
If $\lambda \in \mathbb{Y}$, its transposed partition is denoted by $\lambda'$ and it is defined as $\lambda_i'=\# \{ j : \lambda_j \ge i \}$. We will also use the notion of \emph{boxed partitions}, which are partitions $\lambda$ with bounded length and bounded first row. 
For any $m,n \in \mathbb{Z}_{\ge 0}$ we set
\begin{equation} \label{eq:box}
    \mathrm{Box}(\ell,m) = \left\{ \lambda \in \mathbb{Y} : \lambda_1 \le \ell \, \text{and } \, \ell(\lambda) \le m \right\}.
\end{equation}
In this paper, integer partitions are (totally) ordered according to the reverse lexicographic order; thus, we write $\mu \le \lambda$ if and only if $\mu=\lambda$ or if the first non zero difference $\lambda_i - \mu_i$ is positive. 

\medskip

Throughout, $q$ is a complex parameter such that $|q|<1$. The $q$-Pochhammer symbols are defined as
\begin{equation}
    (z;q)_n = 
    \begin{cases}
        \prod_{i=0}^{n-1} (1-zq^i) \qquad & \text{for } n > 0,
        \\
        1 \qquad & \text{for } n = 0,
        \\
        \prod_{i=n}^{-1} (1-zq^i)^{-1} \qquad & \text{for } n < 0
    \end{cases}
\end{equation}
and the above definition extends to the case when $n=+\infty$.

\subsection{Acknowledgements} I am grateful to Alexei Borodin, Sergei Korotkikh, Leonid Petrov and Travis Scrimshaw for comments on a preliminary version of the draft.

\section{Symmetric functions} \label{sec:symmetric functions}

In the text, at time, we will use the Macdonald polynomials \cite[VI]{Macdonald1995} commonly denoted by $P_{\lambda}(\X_n;q,t)$ and $Q_{\lambda}(\X_n;q,t)$, which depend on a set of $n$ variables $\X_n$ and two parameters $t,q$ such that $|t|,|q|<1$. Setting $t=0$ we recover the $q$-Whittaker polynomials $P_\lambda(\X_n;q,0)$ and $Q_\lambda(\X_n;q,0)$. Setting $q=0$ we recover the Hall-Littlwood polynomials $P_\lambda(\X_n;0,t)$ and $Q_\lambda(\X_n;0,t)$, although below we will still use $q$ in place of $t$ to denote their deformation parameter.

\subsection{Properties of inhomogeneous spin $q$-Whittaker polynomials}

In \Cref{subs:inhom sqW} we defined the two families of spin $q$-Whittaker polynomials $\sqwFBK$ and $\sqwF$. Since in the text we will make use of results in literature, let us compare our definition of the functions $\sqwFBK$ and $\sqwF$ with those of \cite{Borodin_Korotkikh_Inhom,Korotkikh2024}.

\begin{remark} \label{rem:matching notation sqw}
    The functions $\sqwFBK$ correspond to those in \cite{Korotkikh2024} , which we denote here as $\mathbb{F}^{\mathrm{Kor}}$, by
    \begin{equation}
        \sqwFBK_\lambda(\X_n|\A_n,\Bbar_n) = \left( \prod_{r \ge 1} b_{r}^{\lambda_{r+1}} \frac{ (q;q)_{\lambda_r -\lambda_{r+1}} }{ (a_r b_r;q)_{\lambda_r -\lambda_{r+1}} } \right) \mathbb{F}^{\mathrm{Kor}}_\lambda (\X_n|(a_0,a_1,\dots), (b_0,b_1,\dots) ) \big|_{b_0=1}.
    \end{equation}
    They can also be matched with those in \cite{Borodin_Korotkikh_Inhom}, which we denote here as $\mathbb{F}^{\mathrm{BK}}$, by
    \begin{equation}
        \sqwFBK_\lambda(\X_n|\A_n,\Bbar_n) \big|_{a_i=\xi_i s_i , \, b_i= \xi_i/s_i }  =  (-1)^{|\lambda|}\left( \prod_{i\ge 1} \xi_{i}^{\lambda_{i+1}} \right) \mathbb{F}_\lambda^{\mathrm{BK}} (\X_n | \Xi,\mathsf{S}) \big|_{\xi_0=1}.
    \end{equation}
\end{remark}

As stated in \Cref{subs:inhom sqW}, one can recover the family of polynomials $\sqwFBK$ as a special case of the family $\sqwF$ as prescribed in the following proposition.

\begin{proposition} \label{prop:relation sqW and sqWBK}
    We have
    \begin{equation} \label{eq:from sqW to sqWBK}
        \sqwFBK_\lambda(\X_n|\A_n, \B_n) = \sqwF_\lambda(\X_{n+1}|\A_n,\B_n) \big |_{x_{i}=0},
    \end{equation}
    for any $i=1,\dots,n+1$.
\end{proposition}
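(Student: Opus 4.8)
The claim is that $\sqwF_\lambda(\X_{n+1}|\A_n,\B_n)\big|_{x_i=0}$ equals $\sqwFBK_\lambda(\X_n|\A_n,\B_n)$, independently of which variable $x_i$ is set to zero. Since $\sqwF_\lambda(\X_{n+1}|\A_n,\B_n)$ is symmetric in $\X_{n+1}$ (being the specialization at $a_{n+1}=0$ of $\sqwFBK_\lambda(\X_{n+1}|\A_{n+1},\B_{n+1})$, which is symmetric), the specialization is the same regardless of $i$; so it suffices to treat $i=n+1$, i.e. to show $\sqwFBK_\lambda(\X_{n+1}|\A_{n+1},\B_{n+1})\big|_{a_{n+1}=0,\,x_{n+1}=0}=\sqwFBK_\lambda(\X_n|\A_n,\B_n)$. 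The plan is to compare the two sides via the branching rule \eqref{eq:branching rule} and to observe that setting $x_{n+1}=0$ collapses the final skew factor to an indicator forcing $\mu=\lambda$ on the $(n+1)$-st level.

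\textbf{Key steps.} First, unfold one step of the branching rule \eqref{eq:branching rule} applied to $\sqwFBK_\lambda(\X_{n+1}|\A_{n+1},\B_{n+1})$, writing it as $\sum_{\mu\preceq\lambda}\sqwFBK_\mu(\X_n|\tau\A_{n+1},\B_n)\,\sqwFBK_{\lambda/\mu}(x_{n+1}|\A_{n+1},\B_{n+1})$, where here $\lambda\in\mathbb{Y}_{n+1}$ and $\mu\in\mathbb{Y}_n$. Second, examine the skew factor $\sqwFBK_{\lambda/\mu}(x_{n+1}|\A_{n+1},\B_{n+1})$ at $x_{n+1}=0$: from \eqref{eq:skew sqWBK}, the prefactor $x^{|\lambda|-|\mu|}$ vanishes unless $|\lambda|=|\mu|$, which combined with $\mu\preceq\lambda$ (and $\lambda,\mu$ having lengths $n+1$ and $n$ respectively) forces $\mu_r=\lambda_r$ for $r=1,\dots,n$ and $\lambda_{n+1}=0$; for this unique surviving $\mu$ one checks $\sqwFBK_{\lambda/\lambda}(0|\A_{n+1},\B_{n+1})=1$ (all Pochhammer factors become $(\cdot;q)_0=1$). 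Thus only the term $\mu=(\lambda_1,\dots,\lambda_n)=:\lambda^{(n)}$ survives, giving $\sqwFBK_\lambda(\X_{n+1}|\A_{n+1},\B_{n+1})\big|_{x_{n+1}=0}=\sqwFBK_{\lambda^{(n)}}(\X_n|\tau\A_{n+1},\B_n)$. Third, recall $\sqwF_\lambda$ is obtained by the further specialization $a_{n+1}=0$; since the surviving term $\sqwFBK_{\lambda^{(n)}}(\X_n|\tau\A_{n+1},\B_n)$ does not involve $a_{n+1}$, setting $a_{n+1}=0$ changes nothing, and $\tau\A_{n+1}=(a_2,\dots,a_{n+1})$ plays the role of an $n$-tuple of inhomogeneities — but I actually want $\A_n=(a_1,\dots,a_n)$ on the right-hand side, so I must be careful with the indexing convention in the branching rule. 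Re-reading \eqref{eq:branching rule}, the shift $\tau$ is applied to the $\A$ list, so the correct bookkeeping is to peel off the last variable $x_{n+1}$ while the $\A$-list used for the top level is $\A_{n+1}$ and the sublevel uses $\tau\A_{n+1}$; iterating/re-indexing (or equivalently noting that $\sqwFBK_\lambda(\X_n|\A_n,\B_n)$ for $\lambda\in\mathbb{Y}_n$ depends on the $\A$-parameters only through $a_1,\dots,a_n$ while the branching inserts them in shifted order) one matches the parameter lists. The cleanest route is to prove the identity for all $n$ simultaneously by induction, using the branching rule on both sides and the $x_{n+1}=0$ collapse above.

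\textbf{Main obstacle.} The only genuinely delicate point is reconciling the parameter conventions: in the branching rule \eqref{eq:branching rule} the inhomogeneities $\A$ are consumed ``from the front'' via $\tau$, whereas \Cref{def:sqW} removes the \emph{last} inhomogeneity $a_n$; one must check that setting $x_i=0$ (say $i=n+1$) and then $a_{n+1}=0$ indeed produces exactly $\sqwFBK_\lambda(\X_n|\A_n,\B_n)$ with $\A_n=(a_1,\dots,a_n)$ and not some reindexed or truncated list. This is a matter of carefully tracking indices through one application of \eqref{eq:branching rule}; once the $x_{n+1}=0$ collapse to the diagonal $\mu=\lambda^{(n)}$ is established, no further analytic content is required, and the symmetry of $\sqwF_\lambda$ in $\X_{n+1}$ (already recorded in the excerpt) immediately upgrades the $i=n+1$ case to arbitrary $i$.
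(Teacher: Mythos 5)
There is a genuine gap in the key step. You claim that in the branching sum
\begin{equation*}
\sqwF_\lambda(\X_{n+1}|\A_n,\B_n)\big|_{x_{n+1}=0}
=\sum_{\mu\preceq\lambda}\sqwF_\mu(\X_n|\tau\A_n,\B_{n-1})\,\sqwF_{\lambda/\mu}(0|\A_n,\B_n),
\end{equation*}
the specialization $x_{n+1}=0$ kills every term except $\mu=(\lambda_1,\dots,\lambda_n)$, because of the prefactor $x^{|\lambda|-|\mu|}$ in \eqref{eq:skew sqWBK}. This is false: you have overlooked that the $q$-Pochhammer factors $(a_r/x;q)_{\lambda_r-\mu_r}$ are themselves \emph{singular} at $x=0$, with leading behavior $(-a_r/x)^{\lambda_r-\mu_r}q^{\binom{\lambda_r-\mu_r}{2}}$. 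Summing the exponents, the overall prefactor $x^{|\lambda|-|\mu|}\prod_r(a_r/x;q)_{\lambda_r-\mu_r}$ tends to a finite, generically \emph{nonzero} constant as $x\to 0$, for every interlacing $\mu$. A concrete counterexample: for $\lambda=(2,1)$, $\mu=(1)$, the skew factor equals $\frac{(x-a_1)(x-a_2)}{(1-a_1b_1)(1-a_2b_2)}$, which at $x=0$ equals $\frac{a_1a_2}{(1-a_1b_1)(1-a_2b_2)}\neq 0$; for $\mu=(2)$ one gets $\frac{(1-xb_1)(x-a_2)}{(1-a_1b_1)(1-a_2b_2)}$, also nonzero at $x=0$. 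So the branching sum does not collapse when you set $x_{n+1}=0$; instead all terms contribute, and what remains is a nontrivial identity (a $q$-binomial-type resummation) that your argument does not supply. Only in the special case where the relevant $a_r$'s are zero would the prefactor alone force the collapse; in general it does not.

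The paper avoids this entirely by choosing the \emph{other} end of the symmetry: it reduces to $i=1$, not $i=n+1$. Since the branching rule \eqref{eq:branching rule} always peels off the \emph{last} variable $x_{n+1}$, setting $x_1=0$ lands inside the inner factor $\sqwF_\mu(\X_n|\tau\A_n,\B_{n-1})$, which by the inductive hypothesis equals $\sqwFBK_\mu(x_2,\dots,x_n|\tau\A_n,\B_{n-1})$ for $\ell(\mu)\le n-1$ and vanishes otherwise; the skew factor $\sqwF_{\lambda/\mu}(x_{n+1}|\A_n,\B_n)$ is then untouched, equals $\sqwFBK_{\lambda/\mu}(x_{n+1}|\A_n,\B_n)$ because $\lambda_{n+1}=0$, and the remaining sum is literally the branching rule for $\sqwFBK_\lambda(x_2,\dots,x_{n+1}|\A_n,\B_n)$. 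This also dissolves the parameter-indexing difficulty you flagged as the "main obstacle": it never arises, because the specialization and the $\tau$-shift act on disjoint parts of the decomposition. To repair your argument, replace $i=n+1$ by $i=1$ and run the induction as above; as written, the collapse you invoke does not happen.
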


\begin{proof}
    It suffices to prove \eqref{eq:from sqW to sqWBK} for $i=1$, since the polynomial $\sqwF_\lambda(\X_{n+1}|\A_n,\B_n)$ is symmetric in the $\X_{n+1}$ variables.
    Notice that we can isolate the factors depending on the inhomogeneities $a_n$ (and $b_n$) in the expression of the skew spin $q$-Whittaker polynomial \eqref{eq:skew sqWBK} as
    \begin{equation}
        \sqwFBK_{\lambda / \mu}(x| \A_{n}, \B_{n}) = \sqwF_{\lambda / \mu}(x| \A_{n-1}, \B_{n-1}) \frac{(a_n/x;q)_{\lambda_n}}{(a_n b_n;q)_{\lambda_n}}
    \end{equation}
    and therefore the branching factors $\sqwFBK_{\lambda / \mu}(x| \A_{n}, \B_{n})$ and $\sqwF_{\lambda / \mu}(x| \A_{n-1}, \B_{n-1})$ are equal whenever $\lambda_n=0$. Moreover notice that
    \begin{equation}
        \sqwFBK_{\lambda / \mu}(x| \A_{n}, \B_{n}) = \sqwF_{\lambda / \mu}(x| \A_{n-1}, \B_{n-1}) = 0 \qquad \text{if } \mu_{n-1} = 0, \text{ and } \lambda_n>0,
    \end{equation}
    which is due to the presence of the term $1/(q;q)_{\mu_{n-1}-\lambda_{n}}$ in \eqref{eq:skew sqWBK}. These observations allow us to prove \eqref{eq:from sqW to sqWBK} by induction. For $n=1$, we have
    \begin{equation} \label{eq:one variable sqW}
        \sqwF_{(k)}(x_1|\varnothing, \varnothing) = x_1^k, 
    \end{equation}
    which implies 
    \begin{equation}
        \sqwF_{(k,0)}(0,x_2|a_1, b_1) = x_2^k \frac{(a_1/x_2;q)_k}{(a_1 b_1;q)_k} = \sqwFBK_{(k)}(x_2|a_1, b_1),
    \end{equation}
    verifying \eqref{eq:from sqW to sqWBK}. By inductive hypothesis, assume that 
    \begin{equation}
         \sqwF_\lambda(\X_{n}|\A_{n-1},\B_{n-1}) \big |_{x_{1}=0} = 
         \begin{cases} 
            \sqwFBK_\lambda(x_2,\dots,x_n|\A_{n-1}, \B_{n-1}) \qquad & \text{if } \ell(\lambda) \le n-1,
            \\
            0 \qquad & \text{otherwise}.
         \end{cases}
    \end{equation}
    Then, for all $\lambda \in \mathbb{Y}_n$
    \begin{equation}
        \begin{split}
            \sqwF_\lambda(\X_{n+1}|\A_n,\B_n) \big |_{x_{1}=0} & = \sum_{ \mu \in \mathbb{Y}_{n-1} } \sqwF_\mu(\X_{n}|\tau\A_n,\B_{n-1}) \big |_{x_{1}=0} \sqwF_{\lambda/\mu}(x_{n+1}|\A_n,\B_n)
            \\
            & = \sum_{\mu} \sqwFBK_\mu(x_2,\dots,x_n|\tau\A_n,\B_{n-1}) \sqwFBK_{\lambda/\mu}(x_{n+1}|\A_n,\B_n)
            \\
            & = \sqwFBK_\lambda(x_2,\dots,x_{n+1}|\A_{n},\B_n),
        \end{split}
    \end{equation}
    completing the proof.
\end{proof}

In the following proposition, we state an important structural property of the spin $q$-Whittaker polynomials $\sqwF$, which is \emph{not} satisfied by the family $\sqwFBK$ and which in the context of symmetric functions is commonly referred to as \emph{shift property}. The shift property is a key ingredient, for instance, in the proof of the orthogonality of \Cref{thm:orthogonality inhom sqW}. 

\begin{proposition} \label{prop:shift property}
    The spin $q$-Whittaker polynomials satisfy the shift property: for any $\lambda$ such that $\lambda_n>0$, we have
    \begin{equation} \label{eq:shift property}
        \sqwF_{(\lambda_1,\dots, \lambda_n)}(\X_n|\A_{n-1},\B_{n-1}) = (x_1 \cdots x_n)  \sqwF_{(\lambda_1-1,\dots, \lambda_n-1)} (\X_n|\A_{n-1},\B_{n-1}).
    \end{equation}
\end{proposition}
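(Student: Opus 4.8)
The plan is to prove the shift property by induction on $n$, using the branching rule \eqref{eq:branching rule} together with the explicit form of the skew functions $\sqwF_{\lambda/\mu}$ inherited from \eqref{eq:skew sqWBK} with $a_n=0$. The base case $n=1$ is immediate from \eqref{eq:one variable sqW}, which gives $\sqwF_{(k)}(x_1|\varnothing,\varnothing)=x_1^k$, so that $\sqwF_{(k)}(x_1)=x_1\,\sqwF_{(k-1)}(x_1)$ whenever $k>0$. For the inductive step, I would expand both sides of \eqref{eq:shift property} via the branching rule and try to match terms on the sum over interlacing partitions $\mu\preceq\lambda$.

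The key observation is the following combinatorial fact about interlacing partitions: when $\lambda_n>0$, a partition $\mu\in\mathbb{Y}_{n-1}$ satisfies $\mu\preceq\lambda$ if and only if $\mu_{n-1}\geq\lambda_n>0$, hence $\mu$ has full length $n-1$ with all parts positive; moreover the map $\mu\mapsto\mu^- := (\mu_1-1,\dots,\mu_{n-1}-1)$ is a bijection between $\{\mu\in\mathbb{Y}_{n-1}:\mu\preceq\lambda\}$ and $\{\nu\in\mathbb{Y}_{n-1}:\nu\preceq\lambda^-\}$, where $\lambda^-=(\lambda_1-1,\dots,\lambda_n-1)$. I would then invoke the inductive hypothesis to rewrite $\sqwF_\mu(\X_{n-1}|\tau\A_n,\B_{n-1}) = (x_1\cdots x_{n-1})\,\sqwF_{\mu^-}(\X_{n-1}|\tau\A_n,\B_{n-1})$, which applies precisely because $\mu_{n-1}>0$. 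It remains to verify the single-variable identity
\begin{equation} \label{eq:shift-skew-step}
    \sqwF_{\lambda/\mu}(x_n|\A_{n-1},\B_{n-1}) = x_n\,\sqwF_{\lambda^-/\mu^-}(x_n|\A_{n-1},\B_{n-1}),
\end{equation}
under the hypotheses $\mu\preceq\lambda$, $\lambda_n>0$, $\mu_{n-1}>0$. This is a direct check on \eqref{eq:skew sqWBK} (with $a_n=0$): the power of $x_n$ is $|\lambda|-|\mu|$ on the left versus $|\lambda^-|-|\mu^-|=(|\lambda|-1)-(|\mu|-1)=|\lambda|-|\mu|$ on the right — wait, that already matches, so the extra $x_n$ must come from elsewhere; more carefully, $|\lambda|-|\mu| = |\lambda^-|-|\mu^-|+1$ since $\lambda$ has $n$ positive parts but $\mu$ has only $n-1$, so shifting removes $n$ from $|\lambda|$ and $n-1$ from $|\mu|$, giving a net $-1$ and hence the prefactor $x_n$. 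All the $q$-Pochhammer factors in \eqref{eq:skew sqWBK} depend only on the \emph{differences} $\lambda_r-\mu_r$, $\mu_r-\lambda_{r+1}$, $\lambda_r-\lambda_{r+1}$, which are manifestly invariant under $\lambda\mapsto\lambda^-$, $\mu\mapsto\mu^-$ (with the conventions $\lambda_{n+1}=0$ becoming $\lambda^-_{n+1}=0$ and $\mu_n=0$ becoming $\mu^-_n=0$ — one must be slightly careful that the index-$n$ factor $(a_n/x;q)_{\lambda_n}/(a_nb_n;q)_{\lambda_n}$ is harmless since $a_n=0$ makes it $1$). Assembling these pieces, the branching rule for $\sqwF_{\lambda^-}(\X_n)$ emerges with an overall factor $(x_1\cdots x_n)$, completing the induction.

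The main obstacle I anticipate is the bookkeeping at the ``boundary'' indices: one must check carefully that the interlacing constraints, the vanishing indicator $\mathbf{1}_{\mu\preceq\lambda}$, and the terminal Pochhammer symbols behave correctly under the simultaneous shift of $\lambda$ and $\mu$, in particular that no spurious terms appear or disappear in the sum when the hypothesis $\lambda_n>0$ forces $\mu_{n-1}>0$. This is precisely where the hypothesis $\lambda_n>0$ is used and where the family $\sqwFBK$ fails the property (since there $\lambda_{n+1}$ would play the role of $\lambda_n$ and need not be positive). Everything else reduces to the elementary identity \eqref{eq:shift-skew-step}, which is a routine manipulation of $q$-Pochhammer symbols depending only on consecutive differences.
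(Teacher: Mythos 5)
Your proposal is correct and takes essentially the same approach as the paper: the paper also observes the shift identity for the single-variable skew factors (valid when $\lambda_n,\mu_{n-1}>0$, which is automatic under the hypothesis $\lambda_n>0$) and then propagates it through the branching rule. Your write-up just makes the ``iterating across the branching expansion'' step explicit as an induction on $n$, including the bijection $\mu\mapsto\mu^-$ on interlacing partitions and the degree count $|\lambda|-|\mu| = |\lambda^-|-|\mu^-|+1$.
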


\begin{proof}
    Notice that the branching factor $\sqwF_{\lambda/\mu}(x|\A_{n-1},\B_{n-1})$ satisfies the shift property
    \begin{equation} \label{eq: shift property branching}
        \sqwF_{\lambda/\mu}(x|\A_{n-1},\B_{n-1}) = x \, \sqwF_{(\lambda_1 -1 ,\dots, \lambda_n -1 )/(\mu_1-1 ,\dots, \mu_{n-1}-1)}(x|\A_{n-1},\B_{n-1}) 
    \end{equation}
    for all pairs of partitions $\lambda,\mu$ such that $\lambda_n,\mu_{n-1}>0$. Iterating the equality \eqref{eq: shift property branching} across all terms of the branching expansion \eqref{eq:branching rule}, we prove \eqref{eq:shift property}.
\end{proof}

\begin{example}
    The shift property does not hold for the polynomials $\sqwFBK(\X_n|\A_n, \B_n)$, which can be seen by simply analyzing the case $n=1$: for any $k\ge 0$, we have
    \begin{equation}
        \sqwFBK_{(k)}(x|a_1,b_1) = x^{k} \frac{(a_1/x;q)_{k} }{ (a_{1} b_1;q)_{k} }.
    \end{equation}
    On the other hand the one-variable polynomials $\sqwF_{(k)}(x|\varnothing,\varnothing)$ were given in \eqref{eq:one variable sqW}.
\end{example}

\begin{remark}
    In the case $q=a_1=b_1=\cdots=0$, the spin $q$-Whittaker polynomials $\sqwF_\lambda$ reduce to the Schur polynomials $s_\lambda(\X_n)$. In this case the shift property reflects the fact that tensoring an irreducible polynomial representation of $GL(n,\mathbb{C})$ with the 1-dimensional determinant representation shifts its highest weight by $(1,\dots,1)$.
\end{remark}

\begin{remark}
    Notice that \Cref{prop:relation sqW and sqWBK} and \Cref{prop:shift property} imply that $\sqwFBK_\lambda (\X_n|\A_n,\B_n)= 0$, whenever $\ell(\lambda)=n+1$, but they do not imply that $\sqwFBK_\lambda (\X_n|\A_n,\B_n)= 0$ if $\ell(\lambda)\le n$.
\end{remark}

The shift property \eqref{eq:shift property} can be used to extend the spin $q$-Whittaker polynomials $\sqwF_\lambda$ to Laurent polynomials labeled by signatures 
\begin{equation}
    \mathrm{Sig}_{n}=\{\lambda \in \mathbb{Z}^n \, : \, \lambda_1\ge \cdots \ge \lambda_n \},
\end{equation} 
through the relation
    \begin{equation} \label{eq:sqW for signatures}
        \sqwF_\lambda(\X_n|\A_{n-1},\B_{n-1}) = (x_1 \cdots x_n)^{\lambda_n} \sqwF_{(\lambda_1-\lambda_n, \dots , \lambda_{n-1} - \lambda_n , 0)}(\X_n|\A_{n-1},\B_{n-1}).
    \end{equation}
    Moreover, for any pair of signatures $\lambda\in \mathrm{Sig}_n$, $\mu \in \mathrm{Sig}_{n-1}$, the single variable skew spin $q$-Whittaker (Laurent) polynomials can be defined as 
    \begin{equation} \label{eq:skew sqW}
        \sqwF_{\lambda/\mu} (x| \A_{n-1}, \B_{n-1}) = \mathbf{1}_{\mu \preceq \lambda} x^{|\lambda|-|\mu|} \prod_{r = 1}^{n-1} \frac{(a_r/x;q)_{\lambda_r-\mu_r} (x b_r;q)_{\mu_r-\lambda_{r+1}} (q;q)_{\lambda_r-\lambda_{r+1}} }{(q;q)_{\lambda_r-\mu_r} (q;q)_{\mu_r-\lambda_{r+1}} (a_{r} b_r;q)_{\lambda_r-\lambda_{r+1}} }
    \end{equation}
    and it is clear that the Laurent polynomials $\sqwF_{\lambda}$ inherit the branching rule
    \begin{equation}
        \sqwF_{\lambda} (\X_n|\A_{n-1},\B_{n-1}) = \sum_{ \substack{  \mu \in \mathrm{Sig}_{n-1} \\
        \mu \preceq \lambda  } }\sqwF_\mu ( \X_{n-1} | \tau \A_{n-1} , \B_{n-2} ) \sqwF_{\lambda / \mu} (x_n|\A_{n-1},\B_{n-1}).
    \end{equation}
    Above, the interlacing relation between signatures $\mu , \lambda$ of length respectively $n-1$ and $n$ is defined as
    \begin{equation}
        \mu \preceq \lambda \qquad \Leftrightarrow \qquad \lambda_i \ge \mu_i \ge \lambda_{i+1} \qquad \text{for all } i=1,\dots,n-1
    \end{equation}
    and symbol $|\lambda|$ is interpreted as
    \begin{equation}
        |\lambda| = \lambda_1 + \cdots + \lambda_n.
    \end{equation}

\begin{remark}
    In light of \Cref{thm:orthogonality inhom sqW} and \Cref{thm:sqW are a basis}, the family $\{ \sqwF_\lambda(\X_n|\A_{n-1},\B_{n-1}) \}_{\lambda\in \mathrm{Sig}_n}$ forms an orthogonal basis of the linear space of symmetric Laurent polynomials in $n$ variables. Notice that such statement cannot be immediately extended to the polynomials $\sqwFBK$, since there is no automatic way of extending them to Laurent polynomials labeled by signatures.
\end{remark}

Extending the definition of spin $q$-Whittaker polynomials to arbitrary signatures as in \eqref{eq:sqW for signatures}, we gain the following symmetry.

\begin{proposition} \label{prop:reverse symmetry}
    For every signature $\lambda=(\lambda_1 \ge \cdots \ge \lambda_n)$ we have
    \begin{equation} \label{eq:reverse symmetry}
        \sqwF_\lambda(\X_n|\A_{n-1},\B_{n-1}) = \sqwF_{\lambda^{\mathrm{rev}}}(\Xbar_n|\B_{n-1}^{\leftarrow},\A_{n-1}^{\leftarrow}),
    \end{equation}
    where
    \begin{equation}
        \lambda^{\mathrm{rev}} = (-\lambda_n \ge \cdots \ge -\lambda_1),
        \qquad
        \A_{n-1}^\leftarrow = (a_{n-1},\dots,a_1), \qquad
        \B_{n-1}^\leftarrow = (b_{n-1},\dots,b_1).
    \end{equation}
\end{proposition}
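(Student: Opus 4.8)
The plan is to prove \eqref{eq:reverse symmetry} by induction on $n$, propagating the identity through the branching rule. The base case $n=1$ is immediate: a signature of length one is $\lambda=(\lambda_1)$, and by \eqref{eq:sqW for signatures} and \eqref{eq:one variable sqW} we have $\sqwF_{(\lambda_1)}(x|\varnothing,\varnothing)=x^{\lambda_1}$; meanwhile $\lambda^{\mathrm{rev}}=(-\lambda_1)$ and $\sqwF_{(-\lambda_1)}(\overline{x}|\varnothing,\varnothing)=\overline{x}^{-\lambda_1}=x^{\lambda_1}$, so both sides agree. For the inductive step, I would expand the left-hand side via \eqref{eq:branching rule} in the Laurent/signature form,
\begin{equation*}
    \sqwF_\lambda(\X_n|\A_{n-1},\B_{n-1}) = \sum_{\mu \preceq \lambda} \sqwF_\mu(\X_{n-1}|\tau\A_{n-1},\B_{n-2}) \, \sqwF_{\lambda/\mu}(x_n|\A_{n-1},\B_{n-1}),
\end{equation*}
apply the inductive hypothesis to each $\sqwF_\mu(\X_{n-1}|\tau\A_{n-1},\B_{n-2})$, and then match the resulting sum with the branching expansion of the right-hand side of \eqref{eq:reverse symmetry}.

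The heart of the argument is therefore a one-variable skew identity: I claim that for $\lambda\in\mathrm{Sig}_n$ and $\mu\in\mathrm{Sig}_{n-1}$ with $\mu\preceq\lambda$,
\begin{equation*}
    \sqwF_{\lambda/\mu}(x|\A_{n-1},\B_{n-1}) = \sqwF_{\mu^{\mathrm{rev}}/\lambda^{\mathrm{rev}}}(\overline{x}|\B_{n-1}^{\leftarrow},\A_{n-1}^{\leftarrow}),
\end{equation*}
where on the right the skew polynomial is the one indexed by a signature of length $n$ over one of length $n-1$ (note $\lambda^{\mathrm{rev}}$ has length $n$, $\mu^{\mathrm{rev}}$ has length $n-1$, and indeed $\lambda^{\mathrm{rev}}\preceq\mu^{\mathrm{rev}}$ precisely when $\mu\preceq\lambda$). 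This is a direct computation from the explicit product formula \eqref{eq:skew sqW}: writing out the right-hand side, the index pairs $(\lambda_r-\mu_r)$ and $(\mu_r-\lambda_{r+1})$ get reflected and relabeled $r\mapsto n-r$, the prefactor $x^{|\lambda|-|\mu|}$ becomes $\overline{x}^{|\mu^{\mathrm{rev}}|-|\lambda^{\mathrm{rev}}|}=x^{|\lambda|-|\mu|}$, and each Pochhammer factor $(a_r/x;q)_k$ pairs with a corresponding $(\overline{x} \, b_r;q)_k$-type factor after swapping the roles of $\A$ and $\B$; the denominators $(a_rb_r;q)_{\lambda_r-\lambda_{r+1}}$ are manifestly symmetric under $\A\leftrightarrow\B$ together with the reversal. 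One has to be slightly careful that the "edge" factors (those at $r=1$ and $r=n-1$, involving $\lambda_1$, $\lambda_n$, $\mu_1$) match correctly after reindexing, and that $a_n$ is set to zero consistently on both sides in the convention of \Cref{def:sqW}, but no genuine difficulty arises.

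Once the skew identity is in hand, the inductive step is bookkeeping: under the substitution $\mu\mapsto\mu^{\mathrm{rev}}$ the sum over $\{\mu\in\mathrm{Sig}_{n-1}:\mu\preceq\lambda\}$ becomes a sum over $\{\nu\in\mathrm{Sig}_{n-1}:\lambda^{\mathrm{rev}}\preceq\nu\}$, which is exactly the index set appearing in the branching rule for $\sqwF_{\lambda^{\mathrm{rev}}}(\Xbar_n|\B_{n-1}^{\leftarrow},\A_{n-1}^{\leftarrow})$; and the inductive hypothesis identifies $\sqwF_\mu(\X_{n-1}|\tau\A_{n-1},\B_{n-2})$ with $\sqwF_{\mu^{\mathrm{rev}}}(\overline{x}_1,\dots,\overline{x}_{n-1}|(\B_{n-2})^{\leftarrow},(\tau\A_{n-1})^{\leftarrow})$, where $(\tau\A_{n-1})^{\leftarrow}=(a_{n-1},\dots,a_2)$ and $(\B_{n-2})^{\leftarrow}=(b_{n-2},\dots,b_1)$ are precisely the parameter lists that appear one level down in the branching of the right-hand side. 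Collecting these, both expansions agree term by term. The main obstacle, such as it is, lies in getting all the index shifts and the parameter reversals $\tau$, $(\cdot)^{\leftarrow}$ to line up simultaneously at the level of the skew factors; this is purely a matter of careful notation rather than a conceptual difficulty, and I would present the skew identity as a separate displayed lemma-like computation before assembling the induction.
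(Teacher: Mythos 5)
Your proof is correct and follows essentially the same route as the paper: a one-variable skew identity verified directly from the explicit product formula \eqref{eq:skew sqW}, followed by induction on $n$ through the branching rule. One labeling slip to fix: the skew lemma should read $\sqwF_{\lambda/\mu}(x|\A_{n-1},\B_{n-1}) = \sqwF_{\lambda^{\mathrm{rev}}/\mu^{\mathrm{rev}}}(\overline{x}|\B_{n-1}^{\leftarrow},\A_{n-1}^{\leftarrow})$ with $\lambda^{\mathrm{rev}}$ (length $n$) on top, hence $\mu^{\mathrm{rev}}\preceq\lambda^{\mathrm{rev}}$ and the reindexed sum runs over $\{\nu : \nu\preceq\lambda^{\mathrm{rev}}\}$ — your verbal description already states this correctly, so it is purely a notational correction in the displayed formula and the adjacent inequalities.
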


\begin{proof}
    The single variable skew spin $q$-Whittaker (Laurent) polynomials satisfy the relation
    \begin{equation}
        \sqwF_{\lambda/\mu}(x|\A_{n-1},\B_{n-1}) = \sqwF_{\lambda^{\mathrm{rev}} / \mu^{\mathrm{rev}}}(\overline{x}|\B_{n-1}^{\leftarrow},\A_{n-1}^{\leftarrow}),
    \end{equation}
    as it can be readily verified from \eqref{eq:skew sqW}. 
    For instance, in the case $n=3$, for a pair of signatures $\mu \preceq \lambda$, we have
    \begin{equation}
        \begin{split}
            &\sqwF_{\lambda/\mu} (x| \A_{2}, \B_{2}) 
            \\
            &= x^{|\lambda|-|\mu|} \frac{(a_1/x;q)_{\lambda_1-\mu_1} (x b_1;q)_{\mu_1-\lambda_{2}} (q;q)_{\lambda_1-\lambda_{2}} }{(q;q)_{\lambda_1-\mu_1} (q;q)_{\mu_1-\lambda_{2}} (a_{1} b_1;q)_{\lambda_1-\lambda_{2}} } \frac{(a_2/x;q)_{\lambda_2-\mu_2} (x b_2;q)_{\mu_2-\lambda_{3}} (q;q)_{\lambda_2-\lambda_{3}} }{(q;q)_{\lambda_2-\mu_2} (q;q)_{\mu_2-\lambda_{3}} (a_{2} b_2;q)_{\lambda_2-\lambda_{3}} }
            \\
            &
            = (\overline{x})^{-|\lambda|+|\mu|}  \frac{ (b_2/\overline{x};q)_{\mu_2-\lambda_{3}} (a_2 \overline{x};q)_{\lambda_2-\mu_2} (q;q)_{\lambda_2-\lambda_{3}} }{  (q;q)_{\mu_2-\lambda_{3}} (q;q)_{\lambda_2-\mu_2} (a_{2} b_2;q)_{\lambda_2-\lambda_{3}} } \frac{  (b_1/\overline{x};q)_{\mu_1-\lambda_{2}} (a_1 \overline{x};q)_{\lambda_1-\mu_1} (q;q)_{\lambda_1-\lambda_{2}} }{ (q;q)_{\mu_1-\lambda_{2}} (q;q)_{\lambda_1-\mu_1} (a_{1} b_1;q)_{\lambda_1-\lambda_{2}} }
            \\
            &
            = (\overline{x})^{|\lambda^{\mathrm{rev}}|-|\mu^{\mathrm{rev}}|}  \frac{ (b_2/\overline{x};q)_{ \lambda^{\mathrm{rev}}_{1} - \mu^{\mathrm{rev}}_1 } (a_2 \overline{x};q)_{ \mu^{\mathrm{rev}}_1 - \lambda^{\mathrm{rev}}_2 } (q;q)_{ \lambda^{\mathrm{rev}}_{1} - \lambda^{\mathrm{rev}}_2 } }{  (q;q)_{ \lambda^{\mathrm{rev}}_{1} - \mu^{\mathrm{rev}}_1 } (q;q)_{ \mu^{\mathrm{rev}}_1 - \lambda^{\mathrm{rev}}_2 } (a_{2} b_2;q)_{ \lambda^{\mathrm{rev}}_{1} - \lambda^{\mathrm{rev}}_2 } }
            \\
            & \qquad \qquad \qquad \qquad \qquad \qquad \times \frac{  (b_1/\overline{x};q)_{\lambda^{\mathrm{rev}}_{2} - \mu^{\mathrm{rev}}_2  } (a_1 \overline{x};q)_{ \mu^{\mathrm{rev}}_2 - \lambda^{\mathrm{rev}}_3} (q;q)_{\lambda^{\mathrm{rev}}_2-\lambda^{\mathrm{rev}}_{3}} }{ (q;q)_{\lambda^{\mathrm{rev}}_{2} - \mu^{\mathrm{rev}}_2  } (q;q)_{ \mu^{\mathrm{rev}}_2 - \lambda^{\mathrm{rev}}_3} (a_{1} b_1;q)_{\lambda^{\mathrm{rev}}_2-\lambda^{\mathrm{rev}}_{3}} }
            \\
            & = \sqwF_{\lambda^{\mathrm{rev}} / \mu^{\mathrm{rev}}}(\overline{x}|\B_{2}^{\leftarrow},\A_{2}^{\leftarrow}).
        \end{split}
    \end{equation}
    Assuming that \eqref{eq:reverse symmetry} holds for $(n-1)$-variate functions and using the branching rule we can inductively compute
    \begin{equation}
        \begin{split}
            \sqwF_{\lambda} (\X_n|\A_{n-1},\B_{n-1}) &= \sum_{\mu \preceq \lambda} \sqwF_\mu ( \X_{n-1} | \tau \A_{n-1} , \B_{n-2} ) \sqwF_{\lambda / \mu} (x_n|\A_{n-1},\B_{n-1})
            \\
            & = \sum_{\mu \preceq \lambda} \sqwF_{\mu^{\mathrm{rev}}} ( \Xbar_{n-1} | \B_{n-2}^{\leftarrow}, (a_{n-1}, \dots a_{2} ) ) \sqwF_{\lambda^{\mathrm{rev}} / \mu^{\mathrm{rev}}}(\overline{x}|\B_{n-1}^{\leftarrow},\A_{n-1}^{\leftarrow})
            \\
            & = \sum_{\mu \preceq \lambda} \sqwF_{\mu^{\mathrm{rev}}} ( \Xbar_{n-1} | \tau(\B_{n-1}^{\leftarrow}),  (a_{n-1}, \dots a_{2} ) ) \sqwF_{\lambda^{\mathrm{rev}} / \mu^{\mathrm{rev}}}(\overline{x}|\B_{n-1}^{\leftarrow},\A_{n-1}^{\leftarrow})
            \\
            & = \sqwF_{\lambda^{\mathrm{rev}}}(\Xbar_n|\B_{n-1}^{\leftarrow},\A_{n-1}^{\leftarrow}),
        \end{split}
    \end{equation}
    completing the proof.
\end{proof}

\subsection{Cauchy identities}

As proven in \cite{BorodinWheelerSpinq,MucciconiPetrov2020,Borodin_Korotkikh_Inhom,Korotkikh2024}, the spin $q$-Whittaker polynomials satisfy summation identities generalizing the Cauchy identities for Schur polynomials. To state them we need the following.

\begin{definition}
    The \emph{dual spin $q$-Whittaker polynomial} is
    \begin{equation} \label{eq:dual sqWBK}
        \sqwFBK^*_{\lambda} (\Y_m |\A_m, \B_m) = \psi_\lambda(\A_m, \B_m) \sqwFBK_{\lambda} (\Y_m|\A_m, \B_m),
    \end{equation}
    where
    \begin{equation} \label{eq:psi}
        \psi_\lambda(\A_m,\B_m) = \prod_{r = 1}^m \frac{(a_r b_r;q)_{\lambda_r - \lambda_{r+1}}}{(q;q)_{\lambda_r - \lambda_{r+1}}}.
    \end{equation}
    Notice that $\psi_\lambda(\A_m,\B_m) = \psi_\lambda(\B_m,\A_m)$.
\end{definition}

\begin{remark} \label{eq:limit sqW to qW}
    A particular case of the spin $q$-Whittaker polynomials are the $q$-Whittaker polynomials, which are obtained as
        \begin{equation} \label{eq:sqW at q=0}
            P_\lambda(\X_n;q,0)= \sqwFBK_\lambda(\X_n | \A_n,\B_n) \big|_{ a_i= b_i = 0 },
            \qquad
            Q_\lambda(\X_n;q,0)=   \sqwFBK_\lambda^*(\X_n | \B_n ,\A_n) \big|_{ a_i= b_i = 0 }.
        \end{equation}
\end{remark}

\begin{proposition}[\cite{Korotkikh2024}, Theorem B]
    The spin $q$-Whittaker polynomials satisfy the Cauchy identity
    \begin{equation} \label{eq:inhom CI sqWBK sqWBK}
        \sum_\lambda \sqwFBK_\lambda (\X_n| \A_n, \B_n) \sqwFBK_\lambda^* (\Y_m| \B_m , \A_m ) = \frac{H_q(\X_n;\Y_m) H_q(\A_n;\B_m)}{ H_q(\X_n;\B_m) H_q(\A_n;\Y_m) },
    \end{equation}
    where the function $H_q$ was defined in \eqref{eq:H}.
\end{proposition}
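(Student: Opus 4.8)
This is \cite[Theorem B]{Korotkikh2024}, so the most economical course is simply to cite it; nevertheless, let me sketch how one would prove it from the ingredients assembled above. The plan is to induct on $n$, the number of variables in $\X_n$, using the branching rule \eqref{eq:branching rule}, with the base case $n=0$ immediate: the left-hand side collapses to the single term $\lambda=\varnothing$, equal to $1$, and the right-hand side is an empty product. For the inductive step, substitute $\sqwFBK_\lambda(\X_n|\A_n,\B_n)=\sum_{\mu\preceq\lambda}\sqwFBK_\mu(\X_{n-1}|\tau\A_n,\B_{n-1})\,\sqwFBK_{\lambda/\mu}(x_n|\A_n,\B_n)$, exchange the order of summation, and reduce the claim to a \emph{single-variable skew Cauchy identity}: the inner sum $\sum_\lambda \sqwFBK_{\lambda/\mu}(x_n|\A_n,\B_n)\,\sqwFBK^*_\lambda(\Y_m|\B_m,\A_m)$ should equal $\prod_{j=1}^{m}\tfrac{(x_n b_j;q)_\infty}{(x_n y_j;q)_\infty}$ times an explicit ratio of $q$-Pochhammer symbols in the inhomogeneity parameters, multiplied by $\sqwFBK^*_\mu(\Y_m|\B_m,\A'_m)$ with the dual $a$-parameters shifted (from $(a_1,\dots,a_m)$ to $(a_2,\dots,a_{m+1})$). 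Feeding this back in and applying the inductive hypothesis to $\sum_\mu\sqwFBK_\mu(\X_{n-1}|\tau\A_n,\B_{n-1})\,\sqwFBK^*_\mu(\Y_m|\B_m,\A'_m)$ then reproduces the right-hand side of \eqref{eq:inhom CI sqWBK sqWBK}, the parameter shift being exactly the asymmetry already present in \eqref{eq:branching rule}.

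To prove the single-variable skew identity I would expand $\sqwFBK^*_\lambda(\Y_m|\B_m,\A_m)$ through the branching rule for the dual polynomials — using $\sqwFBK^*_{\lambda/\nu}=(\psi_\lambda/\psi_\nu)\,\sqwFBK_{\lambda/\nu}$ with $\psi$ as in \eqref{eq:psi} — so that the inner sum becomes a sum over a Gelfand--Tsetlin-type array interlacing $\mu$, $\lambda$, and the coordinates produced by the $\Y$-branching. Summing over the parts of $\lambda$ one coordinate at a time collapses, at each stage, to a terminating $q$-hypergeometric evaluation of the products of $q$-Pochhammer symbols appearing in \eqref{eq:skew sqWBK} and \eqref{eq:psi} — concretely a $q$-Chu--Vandermonde summation (or, in the cases with nontrivial upper and lower interlacing bounds, a $q$-Pfaff--Saalschütz identity). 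What remains is algebraic bookkeeping of the resulting $q$-Pochhammer factors.

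The step I expect to be the main obstacle is pinning down the precise shape of the single-variable skew identity — the shifted-parameter structure of its right-hand side, the behaviour when the index ranges $\{1,\dots,n\}$ and $\{1,\dots,m\}$ overlap, and the exact $q$-Pochhammer prefactor — and carrying out the nested summations with the $\psi$-weights without sign or indexing errors. A conceptually more transparent but machinery-heavy route is the integrable-vertex-model proof of \cite{Korotkikh2024,Borodin_Korotkikh_Inhom}: realize $\sqwFBK_\lambda$ and $\sqwFBK^*_\lambda$ as partition functions of lattice models glued along the boundary indexed by $\lambda$, and use the Yang--Baxter equation to commute the $m$ rows carrying the spectral parameters $\Y_m$ past the $n$ rows carrying $\X_n$; each row crossing produces a scalar factor of the form $\tfrac{(x_i b_j;q)_\infty(a_i y_j;q)_\infty}{(x_i y_j;q)_\infty(a_i b_j;q)_\infty}$, and after all $nm$ crossings the glued partition function factorizes into the right-hand side of \eqref{eq:inhom CI sqWBK sqWBK}. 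We content ourselves with citing \cite{Korotkikh2024}, as developing the vertex-model formalism lies outside the scope of this paper.
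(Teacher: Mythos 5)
The paper does not prove this proposition; it simply cites \cite{Korotkikh2024}, Theorem~B, and your decision to do the same is therefore aligned with the paper. However, your closing paragraph misrepresents what the cited source actually did: you describe an ``integrable-vertex-model proof of \cite{Korotkikh2024,Borodin_Korotkikh_Inhom}'' in which one commutes rows past each other using the Yang--Baxter equation, attributing that argument to the fully inhomogeneous case. The paper's own remark immediately after the proposition says the opposite. The YBE-based Cauchy identity argument only works in the \emph{homogeneous} specialization $a_1=a_2=\cdots=b_1=b_2=\cdots=-s$, as in the original \cite{BorodinWheelerSpinq}; in the fully inhomogeneous setting the Yang--Baxter route breaks down, and Korotkikh had to introduce genuinely new ``inhomogeneous intertwining relations'' (distinct from the YBE) to establish \eqref{eq:inhom CI sqWBK sqWBK}. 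Presenting the YBE route as the mechanism behind the general result is not just an omission of detail but an incorrect account of why this theorem is nontrivial.

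Your primary inductive sketch (branching in $\X_n$, reducing to a single-variable skew Cauchy identity with the $a$-parameters shifted from $(a_1,\dots,a_m)$ to $(a_2,\dots,a_{m+1})$, and then applying $q$-Chu--Vandermonde / $q$-Pfaff--Saalsch\"utz) correctly identifies the natural strategy and correctly reads off the shift structure forced by \eqref{eq:branching rule}. But you should flag that the step you call the ``main obstacle'' is exactly the step where the inhomogeneous theory departs from the homogeneous one: the needed single-variable skew Cauchy identity is not a routine $q$-hypergeometric evaluation, and the paper's remark signals that a direct term-by-term summation of this kind did not suffice — hence the inhomogeneous intertwining relations. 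As written, your sketch gives the impression that the obstacle is merely bookkeeping, when in fact it is where the essential new input of \cite{Korotkikh2024} enters.
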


Identity \eqref{eq:inhom CI sqWBK sqWBK} was proved in the original paper \cite{BorodinWheelerSpinq} in the homogeneous case $a_1=a_2=\cdots=b_1=b_2=\cdots=-s$ as a consequence of the Yang-Baxter equation for the six vertex model. Interestingly, the proof of the fully inhomogeneous Cauchy identity is more involved and it is not proven in the same way, but through inhomogeneous intertwining relations (which are different from the Yang-Baxter equation) discovered in \cite{Korotkikh2024}.

\begin{corollary}
    Fix $0<m<n$. We have
    \begin{equation} \label{eq:inhom CI sqW sqW}
        \sum_\lambda \sqwF_\lambda (\X_n| \A_{n-1}, \B_{n-1}) \sqwFBK_\lambda^* (\Y_m| \B_m , \A_m ) = \frac{H_q(\X_n;\Y_m) H_q(\A_{n-1};\B_m)}{ H_q(\X_n;\B_m) H_q(\A_{n-1};\Y_m) }.
    \end{equation}
\end{corollary}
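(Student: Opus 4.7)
The plan is to obtain \eqref{eq:inhom CI sqW sqW} as a direct specialization of the Cauchy identity \eqref{eq:inhom CI sqWBK sqWBK}, by setting $a_n = 0$ in the latter. The hypothesis $0 < m < n$ is what makes this specialization clean: since the inhomogeneity sequences $\A, \B$ are fixed (and $\A_k = (a_1,\dots,a_k)$ is a truncation), the condition $m < n$ guarantees that $a_n$ does not appear in the truncated list $\A_m = (a_1,\dots,a_m)$.

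First, I would start with \eqref{eq:inhom CI sqWBK sqWBK} in the form
\begin{equation*}
    \sum_\lambda \sqwFBK_\lambda (\X_n| \A_n, \B_n) \sqwFBK_\lambda^* (\Y_m| \B_m , \A_m ) = \frac{H_q(\X_n;\Y_m) H_q(\A_n;\B_m)}{ H_q(\X_n;\B_m) H_q(\A_n;\Y_m) },
\end{equation*}
and then restrict to $a_n = 0$ on both sides. On the left-hand side, \Cref{def:sqW} gives $\sqwFBK_\lambda(\X_n|\A_n,\B_n)\big|_{a_n=0} = \sqwF_\lambda(\X_n|\A_{n-1},\B_{n-1})$, while the factor $\sqwFBK_\lambda^*(\Y_m|\B_m,\A_m)$ is unaffected since $a_n$ does not appear in $\A_m$.

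On the right-hand side, the factors $H_q(\X_n;\Y_m)$ and $H_q(\X_n;\B_m)$ are unchanged. For the remaining factors, using the product form \eqref{eq:H}, setting $a_n = 0$ makes the $i=n$ row of the product trivial, so
\begin{equation*}
    H_q(\A_n;\B_m)\big|_{a_n=0} = H_q(\A_{n-1};\B_m), \qquad H_q(\A_n;\Y_m)\big|_{a_n=0} = H_q(\A_{n-1};\Y_m).
\end{equation*}
Combining these observations yields \eqref{eq:inhom CI sqW sqW}.

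There is no substantial obstacle here; the only thing to verify carefully is that the convention of indexing parameters by truncations of a single infinite list makes the $a_n = 0$ substitution interact correctly with the $\A_m$ appearing in the dual factor. Provided $m < n$ (as assumed), this is automatic and the identity follows in a single step.
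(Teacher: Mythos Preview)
Your proposal is correct and matches the paper's intended derivation: the corollary is stated immediately after \eqref{eq:inhom CI sqWBK sqWBK} with no separate proof, precisely because it follows by specializing $a_n=0$ as you describe. The hypothesis $m<n$ is exactly what ensures the dual factor $\sqwFBK_\lambda^*(\Y_m|\B_m,\A_m)$ and the relevant $H_q$ factors on the right are untouched by this substitution.
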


\subsection{Spin Hall-Littlewood Rational Functions}

Here we define a family of symmetric functions known in literature as \emph{inhomogeneous Spin Hall-Littlewood rational function} \cite{Borodin2014vertex,BorodinPetrov2016inhom}. They are important as they are self-orthogonal with respect to the scalar product \eqref{eq:sHL scalar product} defined below and satisfy dual Cauchy identities when paired with the spin $q$-Whittaker polynomials.

\begin{definition}
    Define the spin Hall Littlewood rational functions 
    \begin{equation} \label{eq:sHL symmetrized sum}
        \shlF_\lambda(\U_m|\A,\B) = \frac{(1-q)^m}{(q;q)_{m-\ell(\lambda)}} \psi_{\lambda'}(\A,\B) \sum_{\sigma \in S_m} \sigma \left( \prod_{1\le \alpha < \beta \le m} \frac{u_\alpha - q u_\beta}{u_\alpha - u_\beta} \prod_{i=1}^{\ell(\lambda)} \phi_{\lambda_i} (u_i|\A,\B) \right)
    \end{equation}
    with
    \begin{equation}
        \phi_k (u|\A,\B) = (-1)^k \frac{u}{1-a_k u} \prod_{j=1}^{k-1} \frac{ u - b_j }{1-a_j u}.
    \end{equation}
\end{definition}

Let us compare the definition \eqref{eq:sHL symmetrized sum} with others that have appeared in literature.

\begin{remark} \label{rem:matching notation shl}
    The functions $\shlF$ correspond to those in \cite{BorodinPetrov2016inhom} (denoted by $\mathsf{F}( \, \cdot \, | \Xi , \mathsf{S})$) as
    \begin{equation}
        \shlF_\lambda(\U_m|\A,\B)\big|_{a_j = s_j\xi_j, \, b_j=s_j/\xi_j} = \frac{ (-1)^{|\lambda|} }{(q;q)_{m-\ell(\lambda)} } \prod_{r\ge 1} \frac{(s_r^2;q)_{\lambda_r'-\lambda_{r+1}'} }{(q;q)_{\lambda_r'-\lambda_{r+1}'} } \left( \prod_{i\ge1} \frac{1}{\xi_i^{\lambda_{i+1}'} } \right)\mathsf{F}_\lambda(\U_m|\Xi,\mathsf{S})\big|_{s_0=0,\, \xi_0=1}.
    \end{equation}
    They can also be matched with those in \cite{Borodin_Korotkikh_Inhom} (denoted by $\widetilde{\mathsf{F}}$) as
    \begin{equation}
        \shlF_\lambda(\U_m|\A,\B)\big|_{a_i = s_i\xi_i, \, b_i=s_i/\xi_i} = (-1)^{|\lambda|} \left( \prod_{r\ge 1} \frac{1}{ \xi_r^{\lambda_{r+1}'}} \frac{(s_r^2;q)_{\lambda_r'-\lambda_{r+1}'} }{(q;q)_{\lambda_r'-\lambda_{r+1}'} }   \right) \widetilde{\mathsf{F}}_\lambda (\U_m|\Xi, \mathsf{S}). 
    \end{equation}
\end{remark}

\begin{remark}
    The functions $\shlF$ reduce to the Hall-Littlwood polynomials as
    \begin{equation}
        P_\lambda(\U_{m};0,q) =  (-1)^{|\lambda|} \shlF_\lambda( \U_m|\A,\B)\big|_{a_1=b_1=a_2=b_2\cdots=0}.
    \end{equation}
\end{remark}

\begin{definition}
    For rational functions $f,g:\mathbb{T}^m \to \mathbb{C}$, define the scalar product
    \begin{equation} \label{eq:sHL scalar product}
        \langle f(\Z_m) | g(\Z_m) \rangle_m' = \frac{1}{m!} \oint_{\mathbb{T}^m} \frac{\diff z_1}{2 \pi \mathrm{i} z_1 } \cdots \frac{\diff z_m}{2 \pi \mathrm{i} z_m } \prod_{1 \le \alpha \neq \beta \le m} \frac{z_\alpha - z_\beta}{z_\alpha - q z_\beta} f(\Z_m) g(\Zbar_m).
    \end{equation}
\end{definition}

\begin{proposition}[\cite{BorodinPetrov2016inhom}, Corollary 7.5] \label{prop:orthogonality shl}
    The functions $\shlF_\lambda$ satisfy the orthogonality
        \begin{equation} \label{eq:orthogonality shl}
            \langle \shlF_\lambda(\Z_m|\A,\B) | \shlF_\mu(\Z_m|\B,\A) \rangle_m' 
                = \mathbf{1}_{\lambda=\mu} \frac{ (1-q)^m}{(q;q)_{m-\ell(\lambda)}} \psi_{\lambda'}(\A,\B),
        \end{equation}
        where $\psi$ was defined in \eqref{eq:psi}.
\end{proposition}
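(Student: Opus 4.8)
The statement I aim to prove is the orthogonality of the spin Hall–Littlewood functions $\shlF_\lambda$ with respect to the torus scalar product $\langle\,\cdot\,|\,\cdot\,\rangle_m'$ in \eqref{eq:sHL scalar product}. Since the result is attributed to \cite{BorodinPetrov2016inhom}, the plan is really to recall how their proof goes and recast it in the present notation. The natural strategy uses the \emph{algebraic Bethe ansatz / $q$-boson} structure underlying the functions $\shlF_\lambda$: these functions are, up to the explicit prefactor $\tfrac{(1-q)^m}{(q;q)_{m-\ell(\lambda)}}\psi_{\lambda'}(\A,\B)$, the symmetrized partition functions built from the operators $B(u)$ of the higher-spin six-vertex model, and the orthogonality is an instance of the completeness/orthogonality of off-shell Bethe vectors.

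First I would reduce the claim to a contour-integral identity. Plugging the symmetrized-sum formula \eqref{eq:sHL symmetrized sum} for $\shlF_\lambda(\Z_m|\A,\B)$ and $\shlF_\mu(\Zbar_m|\B,\A)$ into the scalar product, one uses the $S_m\times S_m$ symmetry of the integrand together with the fact that the weight $\prod_{\alpha\neq\beta}\frac{z_\alpha-z_\beta}{z_\alpha-qz_\beta}$ is itself symmetric; a standard permutation-averaging argument (as in \cite{BCPS2014, BorodinPetrov2016inhom}) collapses the double sum over $S_m\times S_m$ to a single sum over $S_m$ acting on one of the two factors, at the cost of a combinatorial factor absorbing $\tfrac{1}{m!}$ and one copy of $(q;q)_{m-\ell(\lambda)}$. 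One is then left with evaluating, term by term, integrals of the shape $\oint \prod_i \frac{\diff z_i}{2\pi\mathrm i z_i}\,\prod_{\alpha<\beta}(\cdots)\,\prod_i \phi_{\lambda_i}(z_i|\A,\B)\,\phi_{\mu_i}(z_i^{-1}|\B,\A)$.

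The core computation is then the iterated residue expansion of these contour integrals. One integrates in the variables $z_1,z_2,\dots,z_m$ successively; because $|a_j|,|b_j|<1$ the poles inside $\mathbb{T}$ are controlled, and each residue produces a telescoping/nesting structure. The key mechanism — identical to the one that proves orthogonality of the $q$-demazure or spin Hall–Littlewood Bethe eigenfunctions — is that after symmetrization the only surviving contribution forces the support condition $\lambda=\mu$ (the partition $\lambda$ is recovered from the positions of the poles picked up), and on the diagonal $\lambda=\mu$ the nested residues evaluate to the explicit constant, whose $\A,\B$-dependence reassembles precisely into $\psi_{\lambda'}(\A,\B)$ via the identity $\phi_k(u)\phi_k(u^{-1})$ telescoping against the factors in $\phi$. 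I would organize this as an induction on $m$, peeling off the $z_m$-integral using the explicit form of $\phi_k(u|\A,\B)=(-1)^k\frac{u}{1-a_ku}\prod_{j<k}\frac{u-b_j}{1-a_ju}$, and invoking the inductive hypothesis on the remaining $m-1$ variables.

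\emph{Main obstacle.} The delicate point is the bookkeeping in the residue expansion: one must show that \emph{all} cross terms (those with $\lambda\neq\mu$, and the off-diagonal permutation terms) cancel, and that the diagonal terms combine — across the $S_m$ sum and across the nested residues — into exactly $\tfrac{(1-q)^m}{(q;q)_{m-\ell(\lambda)}}\psi_{\lambda'}(\A,\B)$ with the correct power of $(1-q)$ and the correct $(q;q)$-factor. This is precisely where \cite{BorodinPetrov2016inhom} deploy the full machinery of the $q$-boson / Yang–Baxter algebra (commutation relations of the $B(u)$ operators and the explicit form of the transfer-matrix eigenvalues); rather than reproduce that apparatus, I would cite Corollary 7.5 of \cite{BorodinPetrov2016inhom} directly, and merely verify that the normalization prefactors in our \eqref{eq:sHL symmetrized sum} and \eqref{eq:sHL scalar product} match theirs under the change of variables recorded in \Cref{rem:matching notation shl} — this matching is the only genuinely new (but routine) check required here.
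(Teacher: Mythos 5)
Your proposal is correct and takes essentially the same approach as the paper: the paper states this proposition purely by citation to Corollary 7.5 of \cite{BorodinPetrov2016inhom}, with no proof given, relying implicitly on the notation-matching dictionary recorded in \Cref{rem:matching notation shl}. Your concluding step — citing the reference directly and verifying the normalization prefactors transform correctly under that change of variables — is exactly what the paper does; your sketch of the underlying Bethe-ansatz/nested-residue mechanism is accurate background but not something the paper reproduces.
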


For the next proposition define 
\begin{equation} \label{eq:E}
    E(\X_n;\Y_m) = \prod_{i=1}^n \prod_{j=1}^m(1+ x_i y_j ).
\end{equation}

\begin{proposition}[\cite{Borodin_Korotkikh_Inhom}, Corollary 5.12]
     Fix $m,n\in \mathbb{N}$. We have
     \begin{equation} 
        \sum_{\lambda \in \mathrm{Box}(m,n)}  \shlF_{\lambda'} (\U_m|\A,\B)  \sqwFBK_\lambda(\X_n|\A,\B) = \frac{E(\X_n;-\U_m)}{E(\A_{n};-\U_m)},
    \end{equation}
    where $-\U_m=(-u_1,\dots , -u_m)$ and the $\mathrm{Box}(m,n)$ was defined in \eqref{eq:box}.
\end{proposition}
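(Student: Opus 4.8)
The plan is to prove the identity by the integrable lattice model method of \cite{Borodin_Korotkikh_Inhom}: realize both sides as one and the same partition function, computed in two different orders. Recall that $\sqwFBK_\lambda(\X_n|\A_n,\B_n)$ is, by construction, the partition function of a vertex model on $n$ horizontal lines, the $i$-th line carrying rapidity $x_i$ and the inhomogeneity pair $(a_i,b_i)$, with empty lower boundary and right boundary reading off $\lambda$; and that $\shlF_{\lambda'}(\U_m|\A,\B)$ is, up to the explicit prefactor $\psi_{\lambda'}(\A,\B)$ already present in \eqref{eq:sHL symmetrized sum}, the partition function of a ``conjugate'' vertex model on $m$ horizontal lines carrying $u_1,\dots,u_m$, whose local weights are governed by the same underlying $R$-matrix. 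Placing the two blocks in a single lattice and summing over the configuration on their common boundary produces the left-hand side $\sum_\lambda \shlF_{\lambda'}(\U_m|\A,\B)\,\sqwFBK_\lambda(\X_n|\A_n,\B_n)$. The restriction to $\mathrm{Box}(m,n)$ is automatic rather than imposed: the branching rule \eqref{eq:branching rule} forces $\sqwFBK_\lambda(\X_n|\A_n,\B_n)=0$ once $\ell(\lambda)>n$, while the prefactor $1/(q;q)_{m-\ell(\lambda')}$ in \eqref{eq:sHL symmetrized sum}, with $\ell(\lambda')=\lambda_1$, vanishes once $\lambda_1>m$, so the sum may as well run over all of $\mathbb{Y}$.

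The second evaluation is where the product on the right-hand side appears. Using the Yang--Baxter (RLL) relation that intertwines the spin $q$-Whittaker weights with the conjugate spin Hall--Littlewood weights, one drags the auxiliary $R$-matrices through the lattice until the two blocks disentangle completely and the configuration sum factorizes into $nm$ independent elementary crossings, one for each pair $(x_i,u_j)$. After the $\psi_{\lambda'}$-normalization has been absorbed and all the dependence on the $b_r$'s has telescoped away, each crossing contributes the scalar weight $\tfrac{1-x_iu_j}{1-a_iu_j}$, and the product over $1\le i\le n$, $1\le j\le m$ is exactly $\prod_{i,j}\tfrac{1-x_iu_j}{1-a_iu_j}=E(\X_n;-\U_m)/E(\A_n;-\U_m)$.

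A more hands-on, purely algebraic alternative is induction on $n$. One inserts the branching rule \eqref{eq:branching rule} for $\sqwFBK_\lambda(\X_n|\A_n,\B_n)$ into the left-hand side, exchanges the order of summation, and applies the induction hypothesis to the resulting $(n-1)$-variable sum; what remains is a single-variable identity that pairs the branching factor $\sqwFBK_{\lambda/\mu}(x_n|\A_n,\B_n)$ against the appropriate single-column recursion of the spin Hall--Littlewood functions and produces the missing factor $\prod_{j=1}^m\tfrac{1-x_nu_j}{1-a_1u_j}$, uniformly in $\mu$. The base case $n=0$ is the normalization $\shlF_{\varnothing}(\U_m|\A,\B)=1$, which follows from \eqref{eq:sHL symmetrized sum} and the classical $q$-identity $\sum_{\sigma\in S_m}\prod_{1\le\alpha<\beta\le m}\tfrac{u_{\sigma(\alpha)}-qu_{\sigma(\beta)}}{u_{\sigma(\alpha)}-u_{\sigma(\beta)}}=\tfrac{(q;q)_m}{(1-q)^m}$.

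In either route the crux is the same: establishing the elementary intertwining between the spin $q$-Whittaker and the conjugate spin Hall--Littlewood vertex weights (equivalently, the single-variable skew identity that makes the induction close) and controlling the normalization bookkeeping so that the $\psi_{\lambda'}$ prefactors and every $b_r$-dependence cancel out of the summed identity, leaving the clean two-parameter product $E(\X_n;-\U_m)/E(\A_n;-\U_m)$. The disappearance of the parameters $b_r$ from the answer, despite both $\shlF_{\lambda'}$ and $\sqwFBK_\lambda$ depending on them, is the feature any proof must explain, and is where I expect the main effort to go.
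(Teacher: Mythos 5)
The paper does not prove this proposition; it is imported verbatim with the bracketed citation [\cite{Borodin_Korotkikh_Inhom}, Corollary 5.12], so there is no in-paper argument against which your attempt can be compared. Your first route (realizing both sides as a single partition function, using Yang--Baxter/RLL intertwining between the two vertex-model blocks) is consistent in spirit with the strategy actually used in the cited reference; your second route (induction on $n$ via the branching rule) is also a legitimate plan, and the base case $n=0$, reducing to $\shlF_\varnothing(\U_m|\A,\B)=1$ via the classical symmetrization identity, is checked correctly.

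However, both routes stop exactly at the substantive step. You do not establish the single-row intertwining that disentangles the two vertex-model blocks and produces the factorized weight, and you do not prove the single-variable skew identity that would close the induction, namely that $\sum_{\lambda\succeq\mu}\shlF_{\lambda'}(\U_m|\A,\B)\,\sqwFBK_{\lambda/\mu}(x_n|\A_n,\B_n)$ collapses to $\prod_{j=1}^m\frac{1-x_nu_j}{1-a_1u_j}$ times a shifted $\shlF_{\mu'}$, uniformly in $\mu$. You concede this is ``where I expect the main effort to go,'' and indeed without that computation what you have is an outline, not a proof. The bookkeeping you do carry out is fine: the restriction to $\mathrm{Box}(m,n)$ is automatic (the indexing set is already $\mathbb{Y}_n$, and the $1/(q;q)_{m-\ell(\lambda')}$ prefactor kills the terms with $\lambda_1>m$, since $(q;q)_k^{-1}=0$ for $k<0$), and $\prod_{i,j}\frac{1-x_iu_j}{1-a_iu_j}$ does equal $E(\X_n;-\U_m)/E(\A_n;-\U_m)$, as does your identification of the ``missing factor'' per step of the induction after accounting for the shift $\A_n\mapsto\tau\A_n$ in the branching rule. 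To turn this into a proof you would need to supply the core intertwining/skew identity, which is exactly the content of the cited reference.
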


\begin{corollary}
     Fix $m,n\in \mathbb{N}$. We have
     \begin{equation} \label{eq:dual CI}
        \sum_{\lambda \in \mathrm{Box}(m,n) }  \shlF_{\lambda'} (\U_m|\A,\B)  \sqwF_\lambda(\X_n|\A_{n-1},\B_{n-1}) = \frac{E(\X_n;-\U_m)}{E(\A_{n-1};-\U_m)}.
    \end{equation}
\end{corollary}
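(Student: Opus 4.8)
The plan is to deduce the corollary directly from the preceding proposition,
\[
\sum_{\lambda \in \mathrm{Box}(m,n)}  \shlF_{\lambda'} (\U_m|\A,\B)  \sqwFBK_\lambda(\X_n|\A,\B) = \frac{E(\X_n;-\U_m)}{E(\A_{n};-\U_m)},
\]
by exploiting \Cref{prop:relation sqW and sqWBK}, which expresses $\sqwFBK_\lambda(\X_n|\A_n,\B_n)$ as the specialization $\sqwF_\lambda(\X_{n+1}|\A_n,\B_n)\big|_{x_{n+1}=0}$.

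First I would relabel the parameters: apply the proposition with $n$ replaced by $n+1$ and with the inhomogeneity lists truncated appropriately, so that the left-hand side reads $\sum_{\lambda \in \mathrm{Box}(m,n+1)} \shlF_{\lambda'}(\U_m|\A,\B)\,\sqwFBK_\lambda(\X_{n+1}|\A_{n+1},\B_{n+1})$ and the right-hand side is $E(\X_{n+1};-\U_m)/E(\A_{n+1};-\U_m)$. Then I would specialize $x_{n+1}=0$. On the right-hand side, $E(\X_{n+1};-\U_m)\big|_{x_{n+1}=0} = E(\X_n;-\U_m)$ directly from the definition \eqref{eq:E}, since the factor $\prod_j(1-x_{n+1}u_j)$ becomes $1$; similarly one should arrange the $\A$-list so that $E(\A_{n+1};-\U_m)$ becomes $E(\A_{n-1};-\U_m)$ after the relevant inhomogeneities are set to zero — concretely, one works with $\sqwF_\lambda(\X_n|\A_{n-1},\B_{n-1})$, i.e. with $a_n$ already set to $0$ in the $\sqwFBK$ identity, so that the denominator on the right is $E(\A_{n-1};-\U_m)$ as claimed. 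On the left-hand side, the specialization $x_{n+1}=0$ turns $\sqwFBK_\lambda(\X_{n+1}|\cdots)$ into $\sqwF_\lambda(\X_n|\A_{n-1},\B_{n-1})$ by \Cref{prop:relation sqW and sqWBK}, and crucially that proposition also tells us the term vanishes unless $\ell(\lambda)\le n$; hence the sum over $\mathrm{Box}(m,n+1)$ collapses to a sum over $\mathrm{Box}(m,n)$, matching the statement.

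The one point requiring care — and the likely main obstacle — is bookkeeping of the inhomogeneity parameters under the two operations ``set $a_n=0$'' (which relates $\sqwFBK$ to $\sqwF$) and ``shift/truncate the lists $\A,\B$'', making sure the $\shlF_{\lambda'}(\U_m|\A,\B)$ factor is left genuinely unchanged and that the right-hand side ends up with exactly $E(\A_{n-1};-\U_m)$ in the denominator rather than, say, $E(\A_n;-\U_m)$. Since $\shlF$ depends on a list $\A$ of unspecified (possibly infinite) length and only through its first few entries in any given term, and since setting $a_n=0$ in the $\sqwFBK$-identity is exactly the operation defining $\sqwF$, this is a matter of matching conventions rather than a substantive difficulty; once the indices are aligned, the corollary follows by taking the limit $x_{n+1}\to 0$ in the proposition. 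I would present it in two or three lines: state the specialization, invoke \Cref{prop:relation sqW and sqWBK} termwise (including the vanishing for $\ell(\lambda)=n+1$), and simplify $E(\X_{n+1};-\U_m)\big|_{x_{n+1}=0}$.
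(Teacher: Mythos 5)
Your route via rank $n+1$ and the specialization $x_{n+1}=0$ both overshoots the target and mis-cites \Cref{prop:relation sqW and sqWBK}. That proposition states
\[
\sqwF_\lambda(\X_{n+1}|\A_n,\B_n)\big|_{x_{n+1}=0} \;=\; \sqwFBK_\lambda(\X_n|\A_n,\B_n),
\]
i.e. specializing a variable to $0$ in $\sqwF$ produces $\sqwFBK$. It does \emph{not} say that $\sqwFBK_\lambda(\X_{n+1}|\A_{n+1},\B_{n+1})\big|_{x_{n+1}=0}$ equals $\sqwF_\lambda(\X_n|\A_{n-1},\B_{n-1})$, which is what you assert in the central step of your proof; in fact that equality is false (it would give $\sqwFBK_\lambda(\X_n|\A_n,\B_n)$ after also killing $a_{n+1}$, not the $\sqwF$ you want). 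You partially acknowledge this at the end when you say ``with $a_n$ already set to $0$ in the $\sqwFBK$ identity,'' but once you commit to setting $a_n=0$ the whole detour through $n+1$ variables becomes superfluous, and moreover your route would actually require setting \emph{both} $a_n=0$ and $a_{n+1}=0$ in addition to $x_{n+1}=0$ — a point your write-up leaves unresolved.

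The correct and essentially one-line argument is to use \Cref{def:sqW} directly, not \Cref{prop:relation sqW and sqWBK}: by definition $\sqwF_\lambda(\X_n|\A_{n-1},\B_{n-1}) = \sqwFBK_\lambda(\X_n|\A_n,\B_n)\big|_{a_n=0}$, so one simply sets $a_n=0$ in the Proposition at rank $n$. On the right-hand side $E(\A_n;-\U_m)\big|_{a_n=0} = E(\A_{n-1};-\U_m)$ because the factor $\prod_j(1-a_n u_j)$ becomes $1$, and the summation range $\mathrm{Box}(m,n)$ is untouched. The two identities you are trying to bridge differ \emph{only} by a parameter specialization, not by a change in the number of $\X$-variables, so \Cref{prop:relation sqW and sqWBK} is the wrong tool: it trades a variable for an inhomogeneity, whereas here you only want to drop an inhomogeneity.
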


\begin{definition}
    Let $\lambda \in \mathbb{Y}_{n+k}$ and $\mu \in \mathbb{Y}_n$. Define the skew spin Hall-Littlewood symmetric function as 
    \begin{equation}
        \shlF_{\lambda/\mu}(\U_k|\A,\B) = \frac{\left\langle \shlF_\lambda(\U_k,\Z_n | \A, \B) \middle| \shlF_\mu(\Z_n | \A, \B) \right\rangle_n'}{ \left\langle \shlF_\mu (\Z_n | \A, \B) | \shlF_\mu (\Z_n | \A, \B) \right\rangle_n' },
    \end{equation}
    where $\shlF_{\lambda/\mu}(\U_k, \Z_n|\A,\B)$ denotes the spin Hall-Littlewood rational function \eqref{eq:sHL symmetrized sum} in $k+n$ variables $(u_1,\dots, u_k,  z_1, \dots, z_n)$.
\end{definition}

\begin{proposition}[\cite{Borodin_Korotkikh_Inhom}, Corollary 5.11]
    The spin $q$-Whittaker polynomials satisfy the Pieri rule
    \begin{equation} \label{eq:pieri rule}
        \frac{E(\X_n;-u)}{E(\A_n;-u)} \sqwFBK_\mu(\X_n|\A_n,\B_n)=\sum_{\lambda} \shlF_{\lambda'/\mu'} (u|\A,\B) \sqwFBK_\lambda(\X_n|\A_n,\B_n).
    \end{equation}
\end{proposition}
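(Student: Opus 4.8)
The statement to prove is the Pieri rule
\begin{equation*}
    \frac{E(\X_n;-u)}{E(\A_n;-u)} \sqwFBK_\mu(\X_n|\A_n,\B_n)=\sum_{\lambda} \shlF_{\lambda'/\mu'} (u|\A,\B) \sqwFBK_\lambda(\X_n|\A_n,\B_n),
\end{equation*}
where the sum is over $\lambda$ with $\mu \subseteq \lambda$ and $\lambda/\mu$ a horizontal strip (a single-variable Pieri).

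The plan is to derive the Pieri rule from the dual Cauchy identity together with the orthogonality of the spin Hall–Littlewood functions, a standard dualization maneuver. First I would recall the dual Cauchy identity (stated just above as the Borodin–Korotkikh corollary, before its $a_n=0$ specialization):
\begin{equation*}
    \sum_{\lambda \in \mathrm{Box}(m,n)}  \shlF_{\lambda'} (\U_m|\A,\B)\, \sqwFBK_\lambda(\X_n|\A,\B) = \frac{E(\X_n;-\U_m)}{E(\A_{n};-\U_m)}.
\end{equation*}
Multiply both sides by $\sqwFBK_\mu(\X_n|\A_n,\B_n)$ and expand the product of two $\sqwFBK$'s — which is a symmetric polynomial in $\X_n$ of bounded degree — back in the $\sqwFBK$-basis; the coefficients of this expansion will, by definition of $\shlF_{\lambda/\mu}$ via the $\langle\cdot|\cdot\rangle'$ scalar product and the orthogonality of Proposition~\ref{prop:orthogonality shl}, turn out to be exactly $\shlF_{\lambda'/\mu'}(u|\A,\B)$. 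Concretely, one pairs the dual Cauchy identity (with $m$ auxiliary variables) against $\shlF_{\mu'}(\Z_n'|\A,\B)$-type functions in the $\langle\cdot|\cdot\rangle'$ scalar product, uses the defining formula for $\shlF_{\lambda/\mu}$ as a ratio of scalar products, and lets the number of auxiliary variables grow so that $E(\A_n;-\U_m)$ factors controllably. The key structural input is that $\{\sqwFBK_\lambda(\X_n|\A_n,\B_n)\}$ spans (Theorem~\ref{thm:sqWBK are a basis}), so the expansion is legitimate and the identification of coefficients is unambiguous.

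An alternative, perhaps cleaner route avoids the growing number of variables: take the dual Cauchy identity in $m$ variables $\U_m = (u, v_2,\dots,v_m)$, and also the Pieri-type relation $E(\X_n;-\U_m)/E(\A_n;-\U_m) = \big(E(\X_n;-u)/E(\A_n;-u)\big)\cdot E(\X_n;-(v_2,\dots,v_m))/E(\A_n;-(v_2,\dots,v_m))$. Apply the dual Cauchy identity in $m-1$ variables $(v_2,\dots,v_m)$ to the trailing factor, multiply by $E(\X_n;-u)/E(\A_n;-u)$, and compare coefficients of $\shlF_{\nu'}(v_2,\dots,v_m|\A,\B)$ on both sides. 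On one side this produces $\sum_\lambda \shlF_{\lambda'/\nu'}(u|\A,\B)\sqwFBK_\lambda(\X_n|\A_n,\B_n)$ after re-expanding the $m$-variable $\shlF$ via its own branching/skew structure (the definition of $\shlF_{\lambda/\mu}$ is tailored precisely so that $\shlF_{\lambda'}(u, \V_{m-1}|\A,\B) = \sum_{\nu} \shlF_{\lambda'/\nu'}(u|\A,\B)\shlF_{\nu'}(\V_{m-1}|\A,\B)$); on the other side it produces $\big(E(\X_n;-u)/E(\A_n;-u)\big)\sqwFBK_\nu(\X_n|\A_n,\B_n)$. Equating yields the claim. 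Linear independence of the $\shlF_{\nu'}$ in the auxiliary variables — which follows from their orthogonality, Proposition~\ref{prop:orthogonality shl} — is what licenses the coefficient comparison.

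The main obstacle I anticipate is bookkeeping the combinatorial constants: the normalizations $\psi_{\lambda'}$, the $(q;q)_{m-\ell(\lambda)}^{-1}$ prefactors in the definition of $\shlF$, and the box constraint $\lambda \in \mathrm{Box}(m,n)$ must all be reconciled so that the skew function $\shlF_{\lambda'/\mu'}(u|\A,\B)$ appearing in the final formula matches the one defined via the $\langle\cdot|\cdot\rangle'$ scalar product. In particular one must check that letting $m$ (equivalently the number of auxiliary variables) be large enough removes the box truncation without introducing spurious terms, and that the branching identity for $\shlF$ in the auxiliary variables holds with exactly the constants dictated by Definition of $\shlF_{\lambda/\mu}$ — this is where orthogonality (Proposition~\ref{prop:orthogonality shl}) does the real work, converting the pairing into the stated ratio of norms. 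Apart from this constant-tracking, the argument is purely formal manipulation of the two Cauchy identities and the definition of the skew spin Hall–Littlewood functions.
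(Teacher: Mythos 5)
The paper does not prove this statement: it is cited directly as \cite{Borodin_Korotkikh_Inhom}, Corollary 5.11, and no derivation is given in the text. So there is no internal proof to compare against. Taking your sketch on its own merits: your second (``cleaner'') route — splitting $\U_m=(u,\V_{m-1})$, applying the dual Cauchy identity on both sides, branching $\shlF_{\lambda'}$ in the auxiliary variables, and comparing coefficients of the linearly independent $\shlF_{\nu'}(\V_{m-1}|\A,\B)$ — is the standard dualization argument, and it is the right skeleton for a proof. Linear independence of the $\shlF$'s does follow from \Cref{prop:orthogonality shl}, and the box constraints do disappear once $m$ is chosen large enough relative to $\mu_1$, as you anticipate.

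The one genuine gap is the branching rule
$\shlF_{\lambda'}(u,\V_{m-1}\mid\A,\B)=\sum_{\nu}\shlF_{\lambda'/\nu'}(u\mid\A,\B)\,\shlF_{\nu'}(\V_{m-1}\mid\A,\B)$,
which you treat as built into the definition of $\shlF_{\lambda/\mu}$. It is not: the paper's definition of $\shlF_{\lambda/\mu}$ is a ratio of $\langle\cdot|\cdot\rangle'$ pairings in which \emph{both} slots carry parameters $(\A,\B)$, whereas \Cref{prop:orthogonality shl} diagonalizes the pairing of $\shlF(\cdot|\A,\B)$ against $\shlF(\cdot|\B,\A)$. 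So expanding $\shlF_\lambda(\U_k,\Z_n|\A,\B)$ in the basis $\{\shlF_\kappa(\Z_n|\A,\B)\}$ and pairing with $\shlF_\mu(\Z_n|\A,\B)$ does not pick out a single coefficient; the definition as written does not directly hand you a branching identity. To close this gap you would either need to establish the branching rule independently (it holds, but via the vertex-model construction of $\shlF$, not via the scalar product alone) or justify why the pairing in the skew definition is nonetheless biorthogonal in the needed sense. Once that branching rule is supplied, the rest of your argument goes through as a purely formal coefficient comparison.
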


\section{Eigenrelations} \label{sec:eigenrelations}

The main results of this section are \Cref{thm:eigenrelation sqW lambda n} and \Cref{thm:eigenrelation sqW lambda 1}, which give two eigenrelations for the spin $q$-Whittaker polynomials $\sqwF$. They represent fully inhomogeneous generalizations respectively of \cite[Theorem 3.9]{MucciconiPetrov2020} and \cite[Theorem 3.10]{MucciconiPetrov2020}. The proofs of these theorems will only be sketched, since the arguments can be adapted from those in \cite{MucciconiPetrov2020}.

\subsection{Eigenrelations involving $\lambda_n$}
For a set of $n$ variables $\X_n=(x_1,\dots,x_n)$ and for any $i\in \{1,\dots, n\}$, we denote by $T_{q,x_i}$ be the $q$-shift operator that acts on $n$-variate functions $f(\X_n)$ as
\begin{equation}
    T_{q,x_i} [f (\X_n)] = f(x_1,\dots,x_{i-1},q x_i , x_{i+1}, \dots, x_n). 
\end{equation}
We introduce the operator
\begin{equation}
    \mathfrak{D}_{\B_{n-1}} = \sum_{i=1}^n \prod_{r=1}^{n-1} (1-x_i b_r) \prod_{\substack{ j=1 \\ j\neq i}}^N \frac{1}{1-x_i/x_j} T_{q,x_i}.
\end{equation}
More compactly, one can express the action of $\mathfrak{D}_{\B_{n-1}}$ on an $n$-variate function $f(\X_n)$ as
\begin{equation} \label{eq:D B conjugation}
    \mathfrak{D}_{\B_{n-1}}[f (\X_n)]   = H_q(\X_n;\B_{n-1})^{-1} D_1 \left[ H_q( \X_n;\B_{n-1}) f (\X_n) \right] ,
\end{equation}
where $D_1$ is the $q$-Whittaker $q$-difference operator
\begin{equation}
    D_1 = \sum_{i=1}^n  \prod_{\substack{ j=1 \\ j\neq i}}^n \frac{1}{1-x_i/x_j} T_{q,x_i}.
\end{equation}

\begin{theorem} \label{thm:eigenrelation sqW lambda n}
    We have
        \begin{equation} \label{eq:eigenrelation sqW lambda n}
            \mathfrak{D}_{\B_{n-1}} \mathbb{F}_\lambda(\X_n|\A_{n-1}, \B_{n-1}) = q^{\lambda_N} \mathbb{F}_\lambda (\X_n|\A_{n-1}, \B_{n-1}).
        \end{equation}
\end{theorem}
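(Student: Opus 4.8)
The plan is to prove the eigenrelation by induction on the number of variables $n$, using the branching rule \eqref{eq:branching rule} that defines $\sqwF_\lambda$. The conjugated form \eqref{eq:D B conjugation} is the key structural observation: since $H_q(\X_n;\B_{n-1})^{-1}$ appears in the measure density and interacts cleanly with the Cauchy kernel, conjugating $\mathfrak{D}_{\B_{n-1}}$ to the bare $q$-Whittaker operator $D_1$ lets me separate the combinatorial content (the branching factors) from the analytic content (the action of $D_1$ on monomials / Cauchy kernels). First I would record the $n=1$ base case: here $\B_0=\varnothing$, $\mathfrak{D}_\varnothing = T_{q,x_1}$, and $\sqwF_{(k)}(x_1|\varnothing,\varnothing)=x_1^k$ by \eqref{eq:one variable sqW}, so $T_{q,x_1}[x_1^k]=q^k x_1^k$, matching $q^{\lambda_1}$ with $\lambda_1=\lambda_n=k$.

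For the inductive step I would follow the strategy of \cite[Theorem 3.9]{MucciconiPetrov2020}: write $\sqwF_\lambda(\X_n|\A_{n-1},\B_{n-1}) = \sum_{\mu\preceq\lambda}\sqwF_\mu(\X_{n-1}|\tau\A_{n-1},\B_{n-2})\,\sqwF_{\lambda/\mu}(x_n|\A_{n-1},\B_{n-1})$ and apply $\mathfrak{D}_{\B_{n-1}}$. The essential point is that the operator $\mathfrak{D}_{\B_{n-1}}$, being a sum over which variable $x_i$ gets $q$-shifted, nearly factorizes through the branching: the term where $x_n$ is shifted, combined with the terms where some $x_i$ with $i<n$ is shifted, should reorganize — via the interlacing sum and a telescoping/contour argument — so that the quantity $\lambda_n$ (which is $\mu_{n-1}$'s lower bound but ultimately only feels the bottom of $\lambda$) produces the eigenvalue. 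Concretely I expect to need an identity expressing $\sum_{\mu\preceq\lambda}$ (branching factor times the shifted inner polynomial) back in terms of $\sqwF_\lambda$, which is where the specific rational form of $\sqwF_{\lambda/\mu}(x_n|\A_{n-1},\B_{n-1})$ in \eqref{eq:skew sqW} enters; the factors $(1-x_i b_r)$ in $\mathfrak{D}_{\B_{n-1}}$ are exactly what is needed to absorb the $(x_n b_r;q)$ Pochhammers appearing in the branching weight. Alternatively, and perhaps more cleanly, I would move the $H_q$ conjugation inside: the Cauchy identity \eqref{eq:inhom CI sqW sqW} pairs $\sqwF_\lambda$ against $\sqwFBK^*_\lambda$ with kernel $H_q(\X_n;\Y_m)/H_q(\X_n;\B_m)\cdots$, and applying $D_1$ in the $\X_n$ variables to this generating function — where $D_1$ acts diagonally on the Cauchy kernel with a known eigenvalue depending on $\prod(1-x_i y_j)$-type data — should yield the eigenrelation termwise after extracting coefficients, using that $\{\sqwFBK^*_\lambda\}$ spans enough of $\mathsf{Sym}_m$.

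The main obstacle I anticipate is the bookkeeping in the inductive step: showing that the "cross terms" — where $\mathfrak{D}_{\B_{n-1}}$ shifts an $x_i$ with $i<n$ but the factor $\prod_{r=1}^{n-1}(1-x_i b_r)$ has one more $b$-parameter than the inner operator $\mathfrak{D}_{\B_{n-2}}$ carries — assemble correctly against the branching weights. This is precisely the point where passing from the homogeneous setting of \cite{MucciconiPetrov2020} to the fully inhomogeneous $\B_{n-1}$ requires care, since the extra inhomogeneity $b_{n-1}$ is shared between the branching factor $\sqwF_{\lambda/\mu}(x_n|\A_{n-1},\B_{n-1})$ and the operator, and one must check the residue/cancellation of the spurious poles at $x_i = x_j$ is unaffected. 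Since the paper states the proof will only be sketched and adapted from \cite{MucciconiPetrov2020}, I would present the base case and the branching reduction in detail and then indicate that the remaining identity between rational functions is verified exactly as in the homogeneous case, the inhomogeneity parameters $b_r$ entering as inert spectators in that computation.
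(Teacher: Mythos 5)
There is a genuine gap: neither of your two proposed routes matches the paper's method, and the one you elaborate in most detail does not close.

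Your primary route is an induction on $n$ via the branching rule \eqref{eq:branching rule}, which you attribute to \cite[Theorem 3.9]{MucciconiPetrov2020}. That attribution is incorrect: the reference (and the present paper) do \emph{not} run an induction on branching. As you yourself flag, the entire content of such an induction would be showing that the cross-terms — where $\mathfrak{D}_{\B_{n-1}}$ shifts a variable $x_i$ with $i<n$, carrying the extra factor $(1-x_ib_{n-1})$ that the inner operator $\mathfrak{D}_{\B_{n-2}}$ lacks — recombine with the branching weights to reproduce $q^{\lambda_n}\sqwF_\lambda$. You do not prove this, only assert that it ``should'' assemble correctly with the inhomogeneities ``entering as inert spectators.'' That assertion is precisely the theorem; nothing has been established.

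Your alternative route is in the right neighborhood of ideas but uses the wrong Cauchy identity. You propose pairing $\sqwF_\lambda$ against $\sqwFBK^*_\lambda$ via \eqref{eq:inhom CI sqW sqW} and letting $D_1$ act on the kernel. That identity requires $m<n$, so every $\lambda$ appearing has $\lambda_n=0$ and the eigenvalue is trivially $1$; you would then need to invoke the shift property \Cref{prop:shift property} to lift to $\lambda_n>0$, and you would need linear independence (not merely spanning) of $\{\sqwFBK^*_\lambda\}_{\ell(\lambda)\le m}$ to extract coefficients — neither of which you address. More fundamentally, the paper's actual method is different: it first proves the \emph{dual} eigenrelation for spin Hall--Littlewood functions (\Cref{prop:eigenrelation shl lambda prime N}, showing $\mathfrak{D}^*_n\,\shlF_\lambda|_{a_n=0} = \frac{1-q^{\lambda_n'}}{1-q}\shlF_\lambda|_{a_n=0}$ by direct computation on the symmetrized sum), then establishes the operator identity \eqref{eq:equality actions on partition functions} on the kernel $E(\X_n;-\U_m)/E(\A_{n-1};-\U_m)$ using the integral representation \eqref{eq:integral rep D b}, and finally transfers the eigenrelation across the \emph{dual} Cauchy identity \eqref{eq:dual CI} using the orthogonality \eqref{eq:orthogonality shl} of the functions $\shlF_{\lambda'}$. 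Because the dual Cauchy identity sums over $\lambda\in\mathrm{Box}(m,n)$, it directly reaches partitions with $\lambda_n>0$ and yields a nontrivial eigenvalue $q^{\lambda_n}$ with no need for the shift-property patch. This duality with the spin Hall--Littlewood side — the key structural idea of the proof — is absent from your proposal.
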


\begin{remark}
    In case $\B_{n-1} = (0,\dots,0)$, \Cref{thm:eigenrelation sqW lambda n} reduces to the known eigenrelation for $q$-Whittaker polynomials \cite[VI]{Macdonald1995}
    \begin{equation}
        D_1 P_\lambda (\X_n;q,0) = q^{\lambda_n} P_\lambda(\X_n;q,0).
    \end{equation}
    On the other hand, setting $\B_{n-1} = \A_{n-1} =(-s,\dots,-s)$, \Cref{thm:eigenrelation sqW lambda n} becomes \cite[Theorem 3.9]{MucciconiPetrov2020}.
\end{remark}

The proof of \Cref{thm:eigenrelation sqW lambda n} uses an eigenrelation for the spin Hall-Littlewood rational functions $\shlF$, which is dual to \eqref{eq:eigenrelation sqW lambda n} and that we state below. 

\medskip

Denote by $V(m)$ the space of symmetric rational functions of $m$ variables of degree at most $1$ in each variable. Alternatively, $V(m)$ is the space of rational functions that can be written as $a(\U_m)/b(\U_m)$ where $a,b$ are symmetric polynomials and $\deg_{u_i}(a) - \deg_{u_i}(b) \le 1$ for all $i=1,\dots,m$. Define the operator $\mathfrak{D}^*_{n}$, which acts on function of $V(m)$ as
\begin{equation}
    \mathfrak{D}^*_{n} = \sum_{j=1}^m \prod_{\substack{ l=1 \\ l\neq j }}^m \frac{u_j - q u_l}{u_j-u_l} \mathfrak{C}_{j,n},
\end{equation}
where
\begin{equation}
    \mathfrak{C}_{j,n} = \prod_{i=1}^{n-1} (-a_i) \frac{ u_j - b_i }{1-a_i u_j} \lim_{\varepsilon \to 0} \varepsilon T_{\varepsilon^{-1},u_j}.
\end{equation}

\begin{proposition} \label{prop:eigenrelation shl lambda prime N}
    Let $\lambda \in \mathrm{Box}(n,m)$. Then, we have
    \begin{equation} \label{eq:eigenrelation shl lambda prime N}
        \mathfrak{D}^*_{n} \shlF_\lambda (\U_m|\A,\B)\big|_{a_n=0} = \frac{1-q^{\lambda_n'}}{1-q} \shlF_\lambda (\U_m|\A,\B)\big|_{a_n=0}.
    \end{equation}    
\end{proposition}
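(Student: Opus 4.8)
~\textbf{Plan of proof.} The identity~\eqref{eq:eigenrelation shl lambda prime N} is the ``dual'' counterpart of the eigenrelation~\eqref{eq:eigenrelation sqW lambda n}, and I would prove it directly from the symmetrization formula~\eqref{eq:sHL symmetrized sum}, mimicking the argument for the analogous spin Hall--Littlewood eigenrelation in \cite[Theorem~3.10]{MucciconiPetrov2020} (which is the case $a_i=b_i=-s$). The first observation is that the operator $\mathfrak{C}_{j,n}$ acts on a single factor $\phi_k(u_j|\A,\B)\big|_{a_n=0}$ by picking out the leading behaviour as $u_j\to\infty$: since $\phi_k(u|\A,\B)=(-1)^k\frac{u}{1-a_ku}\prod_{j=1}^{k-1}\frac{u-b_j}{1-a_ju}$ is a rational function of $u$ of numerator degree $k$ and denominator degree $k$ (so it tends to a constant as $u\to\infty$ unless some $a_j=0$), I would compute $\lim_{\varepsilon\to 0}\varepsilon\,T_{\varepsilon^{-1},u_j}$ applied to the product $\prod_{i=1}^{n-1}(-a_i)\frac{u_j-b_i}{1-a_iu_j}\,\phi_k(u_j|\A,\B)\big|_{a_n=0}$ and check that it reproduces $\phi_k$ up to the scalar $q^{?}$-type factor one expects. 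The key combinatorial point, exactly as in \cite{MucciconiPetrov2020}, is that $\mathfrak{D}^*_n$ is (a conjugate of) a Macdonald--Ruijsenaars-type difference operator in the ``$\infty$'' limit, and the function $\prod_{1\le\alpha<\beta\le m}\frac{u_\alpha-qu_\beta}{u_\alpha-u_\beta}\prod_{i=1}^{\ell(\lambda)}\phi_{\lambda_i}(u_i|\A,\B)$ is an eigenfunction of its non-symmetrized pieces.

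Concretely, the steps I would carry out are: (i) reduce~\eqref{eq:eigenrelation shl lambda prime N} to an identity for the non-symmetrized summand by commuting $\mathfrak{D}^*_n$ past the symmetrization $\sum_{\sigma\in S_m}\sigma(\cdots)$, using that $\mathfrak{D}^*_n$ is $S_m$-invariant and that the prefactor $\frac{(1-q)^m}{(q;q)_{m-\ell(\lambda)}}\psi_{\lambda'}(\A,\B)$ is a scalar; (ii) for a fixed $j$, analyze the action of $\mathfrak{C}_{j,n}$ on $\prod_{1\le\alpha<\beta\le m}\frac{u_\alpha-qu_\beta}{u_\alpha-u_\beta}\prod_{i=1}^{\ell(\lambda)}\phi_{\lambda_i}(u_i)$; since $\mathfrak{C}_{j,n}$ multiplies by $\prod_{i=1}^{n-1}(-a_i)\frac{u_j-b_i}{1-a_iu_j}$ and then takes the $u_j\to\infty$ leading term (the factor $\varepsilon$ cancelling a simple pole/zero structure), the outcome is to replace the $j$-th factor by a rescaled version, picking up a factor $q$ precisely from those terms $\frac{u_\alpha-qu_\beta}{u_\alpha-u_\beta}$ (and from $\phi_{\lambda_j}$) in which the $\infty$-scaling of $u_j$ matters; (iii) sum over $j=1,\dots,m$ and recognize that the contributions organize, after the symmetrization is restored, into the eigenvalue $\frac{1-q^{\lambda_n'}}{1-q}$ — the exponent $\lambda_n'$ appearing because $\lambda_n'=\#\{i:\lambda_i\ge n\}$ counts exactly how many of the variables $u_1,\dots,u_{\ell(\lambda)}$ carry a factor $\phi_{\lambda_i}$ of high enough degree (namely $\lambda_i\ge n$) for the $a_n=0$ specialization and the $n-1$ extra factors $\frac{u_j-b_i}{1-a_iu_j}$ in $\mathfrak{C}_{j,n}$ to conspire into a nontrivial residue.

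An alternative, and in some ways cleaner, route is to derive~\eqref{eq:eigenrelation shl lambda prime N} from~\eqref{eq:eigenrelation sqW lambda n} by duality, using the dual Cauchy identity~\eqref{eq:dual CI}: applying $\mathfrak{D}_{\B_{n-1}}$ in the $\X_n$-variables to $\sum_{\lambda\in\mathrm{Box}(m,n)}\shlF_{\lambda'}(\U_m|\A,\B)\sqwF_\lambda(\X_n|\A_{n-1},\B_{n-1})=\frac{E(\X_n;-\U_m)}{E(\A_{n-1};-\U_m)}$, one gets $\sum_\lambda q^{\lambda_n}\shlF_{\lambda'}(\U_m)\sqwF_\lambda(\X_n)$ on the left; if one can show that the right-hand side $\frac{E(\X_n;-\U_m)}{E(\A_{n-1};-\U_m)}$ is also transformed, via some explicit operator $\mathfrak{D}^*_n$ acting in the $\U_m$-variables, into $\sum_\lambda\bigl(\text{eigenvalue}\bigr)\shlF_{\lambda'}(\U_m)\sqwF_\lambda(\X_n)$, then matching coefficients of the (linearly independent, by \Cref{thm:sqW are a basis}) polynomials $\sqwF_\lambda$ yields the claim with $\lambda_n$ replaced by $\lambda_n'$ after transposition. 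The main obstacle in either approach is bookkeeping the $u_j\to\infty$ limit: one must verify that $\lim_{\varepsilon\to0}\varepsilon\,T_{\varepsilon^{-1},u_j}$ is well-defined on $V(m)$ precisely because of the degree-$\le 1$ condition, and then track exactly which monomials survive the limit — this is where the delicate cancellation producing the ``$1-q^{\lambda_n'}$'' (rather than some other polynomial in $q$) occurs, and it is the only place where the hypothesis $\lambda\in\mathrm{Box}(n,m)$ and the specialization $a_n=0$ are genuinely used.
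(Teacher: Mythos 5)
Your main (direct) route is essentially the paper's proof: one acts with $\mathfrak{D}^*_n$ on the symmetrized-sum formula~\eqref{eq:sHL symmetrized sum} term by term and tracks the $u_j\to\infty$ limit. Two things should be sharpened, though. First, the order of operations in $\mathfrak{C}_{j,n}$: the multiplication by $\prod_{i=1}^{n-1}(-a_i)\tfrac{u_j-b_i}{1-a_iu_j}$ happens \emph{after} $\lim_{\varepsilon\to0}\varepsilon T_{\varepsilon^{-1},u_j}$, not before as you write; if you take the limit of the prefactor as well it tends to $1$ and the result is not $\phi_n$. Second, the hedge ``reproduces $\phi_k$ up to the scalar $q^?$-type factor one expects'' misidentifies the key lemma. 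What is actually true — and what makes the whole argument run — is the clean projector identity
\begin{equation*}
\mathfrak{C}_{j,n}\bigl[\phi_k(u_j|\A,\B)\big|_{a_n=0}\bigr]=\delta_{k,n}\,\phi_k(u_j|\A,\B)\big|_{a_n=0},
\end{equation*}
with no $q$-factor whatsoever: for $k<n$ the function $\phi_k$ tends to a constant at infinity so $\varepsilon\,T_{\varepsilon^{-1},u_j}$ kills it, while for $k=n$ the vanishing of $a_n$ promotes $\phi_n$ to grow linearly so the $\varepsilon$ exactly cancels and the prefactor in $\mathfrak{C}_{j,n}$ rebuilds $\phi_n$ on the nose (the box condition $\lambda_1\le n$ excludes $k>n$). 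Consequently, for each $\sigma\in S_m$ the operator $\mathfrak{C}_{i,n}$ is nonzero on $\sigma(\Phi_\lambda)$ exactly when $\sigma^{-1}(i)\in\{1,\dots,\lambda_n'\}$. The $q$-powers you attribute partly to $\phi_{\lambda_j}$ actually arise entirely from the cross-kernel $\prod_{\alpha<\beta}\tfrac{u_\alpha-qu_\beta}{u_\alpha-u_\beta}$: sending $u_i\to\infty$ in the $\sigma$-permuted kernel yields $q^{\sigma^{-1}(i)-1}$, and summing over the $\lambda_n'$ admissible choices of $i$ produces $1+q+\cdots+q^{\lambda_n'-1}=\tfrac{1-q^{\lambda_n'}}{1-q}$.

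Your alternative route — deducing the $\shlF$ eigenrelation from the $\sqwF$ eigenrelation via the dual Cauchy identity and linear independence of the $\sqwF_\lambda$ — is circular in this paper's logical order. Theorem~\ref{thm:eigenrelation sqW lambda n} is itself \emph{proved} from Proposition~\ref{prop:eigenrelation shl lambda prime N} (via~\eqref{eq:equality actions on partition functions} and the $\shlF$ orthogonality~\eqref{eq:orthogonality shl}), and Theorem~\ref{thm:sqW are a basis} sits even further downstream, depending on Theorem~\ref{thm:orthogonality inhom sqW} which in turn uses both eigenrelations. To make the duality route honest you would need an independent proof of the $\sqwF$ eigenrelation (or at least of linear independence), which the paper does not supply.
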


\begin{proof}[Sketch of proof]
    The eigenrelation \eqref{eq:eigenrelation shl lambda prime N} is the inhomogeneous variant of \cite[Theorem 3.4]{MucciconiPetrov2020} and its proof consists of a direct calculation of the action of the operator $\mathfrak{D}^*_{n}$ on the symmetrized sum \eqref{eq:sHL symmetrized sum}. The operator $\mathfrak{C}_{i,n}$ has the property that
    \begin{equation}
        \mathfrak{C}_{i,n} \left[ \phi_k( u_i|\A,\B)\big|_{a_n=0} \right] = \begin{cases} 
            \phi_k(u_i|\A,\B)\big|_{a_n=0} \qquad & \text{if } k=n,
            \\
            0 \qquad & \text{otherwise},
        \end{cases} 
    \end{equation}
    for all $i =1,\dots, m$. As a result, calling
    \begin{equation}
        \Phi_\lambda(\U_m) = \prod_{j=1}^m \phi_{\lambda_j}(u_{j}|\A,\B)\big|_{a_n=0},
    \end{equation}
    for any $\sigma \in S_m$, we have
    \begin{equation}
        \mathfrak{C}_{i,n} \left[ \sigma\left( \Phi_\lambda(\U_m) \right) \right] = \begin{cases} 
            \sigma\left( \Phi_\lambda(\U_m) \right) \qquad & \text{if } i\in \sigma \{1,\dots, \lambda'_n \},
            \\
            0 \qquad & \text{otherwise}.
        \end{cases} 
    \end{equation}
    One can then evaluate
    \begin{equation}
        \begin{split}
            &\mathfrak{D}_n^*\shlF_\lambda(\U_m|\A,\B) \big|_{a_n=0}
            \\
            &= C(\lambda) \sum_{j=1}^m \sum_{\substack{ \sigma \in S_n \\ i \in \{1, \dots , \lambda_N' \} } } \prod_{ \substack{ l=1 \\ l\neq j} } \frac{u_j -q u_l}{u_j - u_l} \lim_{\varepsilon\to 0} \sigma\left\{ \prod_{1\le \alpha < \beta \le m} \frac{u_{\alpha} - q u_{\beta}}{u_{\alpha} - u_{\beta}} \right\}\bigg|_{u_i=\varepsilon}   \mathfrak{C}_{i,N} \left[ \sigma \left(\Phi_\lambda(\U_m) \right) \right],
        \end{split}
    \end{equation}
    with the constant prefactor set as $C(\lambda) = \frac{(1-q)^n}{(q;q)_{n-\ell(\lambda)}} \psi_{\lambda'}(\A,\B)\big|_{a_n=0}$.
    Following \cite[Theorem 3.4]{MucciconiPetrov2020}, the limit in the right hand side is responsible for the creation of factors $q^{\bar{k}}$ with $\bar{k}=\sigma^{-1}(i)$ and the double summation can be rearranged as
    \begin{equation}
        C(\lambda) \sum_{\bar{k}=1}^{\lambda_n'} q^{\bar{k}} \sum_{\tau \in S_m} \tau \left( \prod_{1\le \alpha < \beta \le m} \frac{u_\alpha - q u_\beta}{u_\alpha - u_\beta} \Phi_\lambda(\U_m) \right),
    \end{equation}
    which equals the right hand side of \eqref{eq:eigenrelation shl lambda prime N}.
\end{proof}

\begin{proof}[Sketch of proof of \Cref{thm:eigenrelation sqW lambda n}]
    Armed with \Cref{prop:eigenrelation shl lambda prime N} we prove that
    \begin{equation} \label{eq:equality actions on partition functions}
        \left( 1 - (1-q)\mathfrak{D}^*_n \right) \left[ \frac{E(\X_n; -\U_m)}{E(\A_{n-1}; -\U_m)} \right] = \mathfrak{D}_{\B_{n-1}} \left[ \frac{E(\X_n; -\U_m)}{E(\A_{n-1}; -\U_m)} \right],
    \end{equation}
    which implies, by the dual Cauchy identity \eqref{eq:dual CI}
    \begin{equation}
        \begin{split}
            &\sum_\lambda  q^{\lambda_n} \shlF_{\lambda'} (\U_m|\A,\B) \big|_{a_n=0}  \sqwF_\lambda(\X_n|\A_{n-1},\B_{n-1}) 
            \\
            & = \sum_\lambda  q^{\lambda_n} \shlF_{\lambda'} (\U_m|\A,\B) \big|_{a_n=0} \mathfrak{D}_{\B_{n-1}}\left[ \sqwF_\lambda(\X_n|\A_{n-1},\B_{n-1}) \right]
        \end{split}
    \end{equation}
    and by orthogonality \eqref{eq:orthogonality shl} and the eigenrelation \eqref{eq:eigenrelation sqW lambda n}. Checking the identity \eqref{eq:equality actions on partition functions} is a direct calculation which uses the integral representation of the action of the $q$-difference operator $\mathfrak{D}_{\B_{n-1}}$ on fully factorized functions
    \begin{equation} \label{eq:integral rep D b}
        \frac{\mathfrak{D}_{\B_{n-1}}[h(x_1) \cdots h(x_n)]}{h(x_1) \cdots h(x_n)} = - \oint_{x_1,\dots,x_n}  \frac{\prod_{r=1}^{n-1}(1-b_r z)}{\prod_{i=1}^n (1-z/x_i)} \frac{h(qz)}{h(z)} \frac{\diff z}{2 \pi \mathrm{i} z}.
    \end{equation}
    Above, the integration contour only encircles poles $x_1,\dots,x_n$. An analogous calculation was reported in \cite[Theorem 3.9]{MucciconiPetrov2020} and therefore we omit the details.
\end{proof}

\subsection{Eigenrelations involving $\lambda_1$}

Introduce the $q$-difference operator
\begin{equation}
        \overline{\mathfrak{D}}_{\A_{n-1}}  = \sum_{i=1}^n \prod_{r=1}^{n-1} (1-a_r/x_i) \prod_{\substack{ j=1 \\ j\neq i}}^n \frac{1}{1-x_j/x_i} T_{q^{-1},x_i}.
\end{equation}
The action of $\overline{\mathfrak{D}}_{\A_{n-1}}$ on an $n$-variate function $f(\X_n)$ can also be expressed as
\begin{equation} \label{eq:D bar by conjugation}
        \overline{\mathfrak{D}}_{\A_{n-1}} [f (\X_n)] = H_q(\Xbar_n;\A_{n-1})^{-1} \overline{D}_1 \left[ H_q( \Xbar_n ;\A_{n-1}) f(\X_n) \right],            
\end{equation}
where 
\begin{equation}
    \overline{D}_1 = \sum_{i=1}^n  \prod_{\substack{ j=1 \\ j\neq i}}^n \frac{1}{1-x_j/x_i} T_{q^{-1},x_i}.
\end{equation}

\begin{theorem} \label{thm:eigenrelation sqW lambda 1}
    We have
    \begin{equation}\label{eq:eigenrelation sqW lambda 1}
        \overline{\mathfrak{D}}_{\A_{n-1}} \overline{\mathbb{F}}_\lambda(\X_n|\A_{n-1}, \B_{n-1}) = q^{-\lambda_1} \overline{\mathbb{F}}_\lambda (\X_n|\A_{n-1}, \B_{n-1}).
    \end{equation}
\end{theorem}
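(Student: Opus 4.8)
\textbf{Proof plan for \Cref{thm:eigenrelation sqW lambda 1}.}

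The plan is to deduce the $\lambda_1$-eigenrelation from the $\lambda_n$-eigenrelation of \Cref{thm:eigenrelation sqW lambda n} by applying the reverse symmetry of \Cref{prop:reverse symmetry}. Concretely, writing $\sqwF_\lambda(\X_n|\A_{n-1},\B_{n-1}) = \sqwF_{\lambda^{\mathrm{rev}}}(\Xbar_n|\B_{n-1}^{\leftarrow},\A_{n-1}^{\leftarrow})$, one should check that conjugating the operator $\mathfrak{D}_{\B_{n-1}}$ by the substitution $x_i \mapsto \overline{x}_i$ together with the reversal of the parameter lists turns it into $\overline{\mathfrak{D}}_{\A_{n-1}}$, and that the eigenvalue $q^{\lambda_n}$ attached to $\sqwF_\lambda$ becomes $q^{(\lambda^{\mathrm{rev}})_1} = q^{-\lambda_n}$ when read off $\sqwF_{\lambda^{\mathrm{rev}}}$ — equivalently, $q^{\lambda_1}$ when the roles of $\lambda$ and $\lambda^{\mathrm{rev}}$ are exchanged. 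The cleanest way to run this is to start from \eqref{eq:eigenrelation sqW lambda n} applied to $\sqwF_{\lambda^{\mathrm{rev}}}(\Y_n|\B_{n-1}^{\leftarrow},\A_{n-1}^{\leftarrow})$ with the eigenvalue $q^{(\lambda^{\mathrm{rev}})_n} = q^{-\lambda_1}$, then substitute $y_i = \overline{x}_i$ and simplify the resulting operator.

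First I would record the behaviour of the $q$-shift operators under inversion of variables: $T_{q,y_i}$ acting in the variable $y_i = \overline{x}_i$ becomes $T_{q^{-1},x_i}$, since $q y_i = q/x_i = \overline{(q^{-1} x_i)}$. Next I would compute the transform of the prefactor of $\mathfrak{D}_{\B_{n-1}}$: under $y_i = \overline{x}_i$ the factor $\prod_{r=1}^{n-1}(1-y_i b_r^{\leftarrow})$, summed over the reversed $\B$-list, becomes $\prod_{r=1}^{n-1}(1-b_r/x_i)$ — but the operator we want, $\overline{\mathfrak{D}}_{\A_{n-1}}$, carries instead the factor $\prod_{r=1}^{n-1}(1-a_r/x_i)$; here the second swap of parameter lists in \eqref{eq:reverse symmetry} (namely $\A \leftrightarrow \B$) takes care of turning the $b$'s into $a$'s. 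Similarly the Vandermonde-type factor $\prod_{j\ne i}(1-y_i/y_j)^{-1}$ becomes $\prod_{j\ne i}(1-x_j/x_i)^{-1}$, matching the corresponding factor of $\overline{\mathfrak{D}}_{\A_{n-1}}$ exactly. Assembling these pieces shows that the image of $\mathfrak{D}_{\B_{n-1}}$ under the substitution $(y_i \to \overline{x}_i,\ \A\leftrightarrow\B,\ \text{reversal})$ is precisely $\overline{\mathfrak{D}}_{\A_{n-1}}$; one may also phrase this more invariantly using the conjugation identities \eqref{eq:D B conjugation} and \eqref{eq:D bar by conjugation}, noting that $H_q(\Y_n;\B_{n-1}^{\leftarrow})$ becomes $H_q(\Xbar_n;\A_{n-1})$ under the same substitution, and that $D_1$ transforms into $\overline{D}_1$.

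The main obstacle is purely bookkeeping: keeping the two independent list-reversals straight — the geometric reversal $\Y_n^{\leftarrow}$ (order of variables) versus the parameter reversals $\A_{n-1}^{\leftarrow}, \B_{n-1}^{\leftarrow}$ — and confirming that, because every factor in $\mathfrak{D}_{\B_{n-1}}$ is already symmetric under permuting the non-active variables, the geometric reversal acts trivially and only the $\A\leftrightarrow\B$ swap matters. Once the operator identity is established, the eigenrelation follows immediately: applying \eqref{eq:eigenrelation sqW lambda n} with $\lambda$ replaced by $\lambda^{\mathrm{rev}}$ and parameters swapped and reversed, then substituting $\X_n \mapsto \Xbar_n$ and invoking \Cref{prop:reverse symmetry}, yields
\begin{equation}
    \overline{\mathfrak{D}}_{\A_{n-1}}\, \sqwF_\lambda(\X_n|\A_{n-1},\B_{n-1}) = q^{(\lambda^{\mathrm{rev}})_n}\, \sqwF_\lambda(\X_n|\A_{n-1},\B_{n-1}) = q^{-\lambda_1}\, \sqwF_\lambda(\X_n|\A_{n-1},\B_{n-1}),
\end{equation}
which is \eqref{eq:eigenrelation sqW lambda 1}. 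As the paper itself notes, this matches the structure of \cite[Theorem 3.10]{MucciconiPetrov2020}, so a reader familiar with that argument will find no surprises; I would present the proof as a short "sketch" consisting of the operator-transformation computation followed by the one-line application of reverse symmetry.
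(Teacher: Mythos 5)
Your proposal is correct, but it takes a genuinely different route from the paper. The paper proves \Cref{thm:eigenrelation sqW lambda 1} ``from scratch'' by mirroring the argument for \Cref{thm:eigenrelation sqW lambda n}: it establishes a second eigenrelation for the spin Hall--Littlewood rational functions (\Cref{prop:eigenrelation sHL lambda 1}) involving the operator $\overline{\mathfrak{D}}_r^*$, and then transfers that eigenrelation to the spin $q$-Whittaker side via a new duality identity \eqref{eq:duality} for the kernel $E(\X_n;-\U_m)/E(\A_{n-1};-\U_m)$. Your argument bypasses all of that: once one knows \Cref{thm:eigenrelation sqW lambda n} together with the reverse symmetry \Cref{prop:reverse symmetry}, the conjugation $y_i \mapsto \overline{x}_i$ combined with the swap $\A \leftrightarrow \B$ carries $\mathfrak{D}_{\A_{n-1}}$ (acting on $\Y_n$) into $\overline{\mathfrak{D}}_{\A_{n-1}}$ (acting on $\X_n$) — indeed $T_{q,y_i}$ becomes $T_{q^{-1},x_i}$, $\prod_{j\neq i}(1-y_i/y_j)^{-1}$ becomes $\prod_{j\neq i}(1-x_j/x_i)^{-1}$, and $\prod_r(1-y_i a_r)$ becomes $\prod_r(1-a_r/x_i)$, with the list reversal $\A^{\leftarrow}$ being invisible since the product is over all $r$. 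Reading off the eigenvalue from $(\lambda^{\mathrm{rev}})_n=-\lambda_1$ gives \eqref{eq:eigenrelation sqW lambda 1}. What your approach buys is a shorter proof that eliminates the need for \Cref{prop:eigenrelation sHL lambda 1} and the second duality computation; what it costs is a dependence on the reverse symmetry (established separately in Section 2), and you should make explicit that you are invoking \Cref{thm:eigenrelation sqW lambda n} for the \emph{signature} $\lambda^{\mathrm{rev}}$ (whose entries are nonpositive), which requires the small extension of the eigenrelation from partitions to signatures via the shift property and the fact that $\mathfrak{D}_{\B_{n-1}}$ commutes with multiplication by $(x_1\cdots x_n)^k$ up to a factor $q^k$. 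One small presentational caveat: you mention a ``geometric reversal $\Y_n^{\leftarrow}$,'' but the reverse symmetry inverts the variables rather than reversing their order; since $\sqwF_\lambda$ is symmetric in its variables this is harmless, but it may confuse a reader.
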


The proof of \Cref{thm:eigenrelation sqW lambda 1} follows the same blueprint of \Cref{thm:eigenrelation sqW lambda n} and uses another eigenrelation for the spin Hall-Littlewood rational functions $\shlF$. To state it we define the operator
\begin{equation}
    \overline{\mathfrak{D}}_r^* = \sum_{ \substack{ I \subset \{ 1,\dots,n\} \\ |I|=r } } \prod_{ \substack{ i \in I \\ j \in \{ 1,\dots,n \} \setminus I } }^M \frac{u_j - q u_i}{u_j - u_i} T_{0,I},
\end{equation}
where
\begin{equation}
    T_{0,I} = \prod_{i \in I} T_{0,i}.
\end{equation}

\begin{proposition} \label{prop:eigenrelation sHL lambda 1}
    We have
    \begin{equation} \label{eq:eigenrelation sHL lambda 1}
        \overline{\mathfrak{D}}_r^* \shlF_\lambda = e_r(1,q,\dots,q^{M-\lambda_{1}'-1}) \shlF_\lambda.
    \end{equation}
\end{proposition}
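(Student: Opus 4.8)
The plan is to compute the action of $\overline{\mathfrak{D}}_r^*$ directly on the symmetrized-sum formula \eqref{eq:sHL symmetrized sum} for $\shlF_\lambda$, following the template of the $\lambda_n$-eigenrelation and of the corresponding homogeneous statement in \cite{MucciconiPetrov2020}. The key observation I would isolate first is the effect of the elementary shift $T_{0,u_i}$, which sends $u_i \to 0$, on a single factor $\phi_k(u_i|\A,\B)$: since $\phi_k(u|\A,\B) = (-1)^k \frac{u}{1-a_k u}\prod_{j=1}^{k-1}\frac{u-b_j}{1-a_j u}$, each such factor contains an overall power of $u$, so $T_{0,u_i}[\phi_k(u_i|\A,\B)] = 0$ for every $k \ge 1$. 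This is the mechanism that kills most terms. Therefore, when $\overline{\mathfrak{D}}_r^*$ acts on $\sigma(\prod_{i=1}^{\ell(\lambda)}\phi_{\lambda_i}(u_i|\A,\B))$, only those index sets $I$ that avoid the "occupied" slots $\sigma\{1,\dots,\ell(\lambda)\}$ survive; i.e., $I$ must be a subset of size $r$ of the $M - \ell(\lambda)$ slots carrying no $\phi$-factor. (Here I am writing $M$ for the number of variables, matching the operator's definition; one should reconcile $M$, $m$ and the box constraint so that $M - \ell(\lambda) \ge M - \lambda_1'$, hence the polynomial $e_r(1,q,\dots,q^{M-\lambda_1'-1})$ makes sense — this bookkeeping needs care since $\ell(\lambda)=\lambda_1'$.)

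Granting that reduction, on the surviving terms $T_{0,I}$ acts as the identity on the $\phi$-part, and the whole computation collapses to understanding how $\sum_{|I|=r}\prod_{i\in I, j\notin I}\frac{u_j - q u_i}{u_j - u_i}$ interacts with the Hall--Littlewood symmetrization prefactor $\prod_{\alpha<\beta}\frac{u_\alpha - q u_\beta}{u_\alpha - u_\beta}$ after we specialize the variables in $I$ to $0$. The standard manipulation — symmetrizing over $S_M$, pulling the rational kernels together, and using a partial-fraction / residue identity — produces a sum over which subset of the "free" variables is sent to zero, and the combinatorial weight of choosing the $k$-th free slot turns into $q^{\,k-1}$. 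Summing these weights over all size-$r$ subsets of an $(M-\lambda_1')$-element set gives exactly $e_r(1,q,\dots,q^{M-\lambda_1'-1})$, and crucially this scalar comes out \emph{in front of} the same symmetrized sum, so the remaining factor reassembles into $\shlF_\lambda$ with its original normalization $\frac{(1-q)^M}{(q;q)_{M-\ell(\lambda)}}\psi_{\lambda'}(\A,\B)$ intact. This is precisely the inhomogeneous analogue of \cite[Theorem~3.10]{MucciconiPetrov2020}, and since the $\A,\B$-dependence is confined entirely to the $\phi_k$ factors (which are untouched on surviving terms) and to the overall constant $\psi_{\lambda'}$, the inhomogeneity genuinely plays no role in the eigenvalue.

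The main obstacle I anticipate is not the vanishing mechanism (that is clean) but the symmetrization/residue bookkeeping that extracts the eigenvalue: one must show that after restricting to $I$ disjoint from the occupied slots and setting $u_I = 0$, the double product $\prod_{i\in I,j\notin I}\frac{u_j-qu_i}{u_j-u_i}$ times $\prod_{\alpha<\beta}\frac{u_\alpha-qu_\beta}{u_\alpha-u_\beta}$ re-symmetrizes correctly and that the accumulated powers of $q$ are $1,q,\dots,q^{M-\lambda_1'-1}$ rather than, say, shifted by $\ell(\lambda)$ — getting the exponent range right is where the argument is delicate. Since this is the verbatim inhomogeneous lift of an argument already carried out in \cite{MucciconiPetrov2020}, I would present it as a sketch, indicating the substitutions ($s_i \to$ inhomogeneities, $\xi_i$ absorbed into $\psi$) and referring to \cite[Theorem~3.10]{MucciconiPetrov2020} for the full residue computation.
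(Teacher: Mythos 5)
Your proposal follows the same route as the paper's own (very terse) sketch: direct evaluation of $\overline{\mathfrak{D}}_r^*$ on the symmetrized sum \eqref{eq:sHL symmetrized sum}, with the vanishing $T_{0,u_i}[\phi_k(u_i|\A,\B)]=0$ for $k\ge 1$ as the mechanism that forces $I$ to avoid the occupied slots, followed by the residue/resymmetrization bookkeeping that produces $e_r(1,q,\dots,q^{M-\lambda_1'-1})$. The only slip is the attribution: the homogeneous prototype for this spin Hall--Littlewood eigenrelation is \cite[Theorem~8.2]{BufetovMucciconiPetrov2018}, not \cite[Theorem~3.10]{MucciconiPetrov2020} (the latter is the $q$-Whittaker analogue, i.e.\ the prototype for \Cref{thm:eigenrelation sqW lambda 1}); otherwise the argument is correct and matches the paper.
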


\begin{proof}[Sketch of proof]
    \Cref{prop:eigenrelation sHL lambda 1} is the inhomogeneous variant of \cite[Theorem 8.2]{BufetovMucciconiPetrov2018}, where parameters $\A,\B$ were set as $a_1,\dots,a_{n-1}=b_1=\cdots=b_{n-1}=-s$. It's proof is a direct evaluation of the action of $\overline{\mathfrak{D}}_r^*$ on the explicit expression \eqref{eq:sHL symmetrized sum} and since the argument are analogous to \cite[Theorem 8.2]{BufetovMucciconiPetrov2018} we will not repeat it here.
\end{proof}

\begin{proof}[Sketch of proof of \Cref{thm:eigenrelation sqW lambda 1}]
    Define the operator
    \begin{equation}
        \widetilde{\mathfrak{D}} = q^{-n} \left( 1 + (q-1) \overline{\mathfrak{D}}^*_1 \right)
    \end{equation}
    and notice, by \eqref{prop:eigenrelation sHL lambda 1}, that $\widetilde{\mathfrak{D}}$ acts on spin Hall-Littlewood rational functions $\shlF_\lambda$ diagonally with eigenvalue $q^{-\lambda_1'}$.
    We prove that
    \begin{equation} \label{eq:duality}
         \widetilde{\mathfrak{D}} \left[ \frac{E(\X_n;-\U_m)}{E(\A_{n-1};-\U_m)} \right] = \overline{\mathfrak{D}}_{\A_{n-1}} \left[ \frac{E(\X_n;-\U_m)}{E(\A_{n-1};-\U_m)} \right].
    \end{equation}
    This identity proves \eqref{eq:eigenrelation sqW lambda 1} thanks to the orthogonality of the spin Hall-Littlewood functions \eqref{eq:orthogonality shl}.
    The action of $\widetilde{\mathfrak{D}}$ on a fully factorized function $g(u_1)\cdots g(u_m)$ with $g(0)=1$ can be written as
    \begin{equation}
        \frac{\widetilde{\mathfrak{D}}[g(u_1) \cdots g(u_m)] }{ g(u_1) \cdots g(u_m) } = 
        \oint_{\gamma_{0,\boldsymbol{u}}} \prod_{j=1}^n \frac{w-q^{-1}u_j}{w-u_j} \frac{1}{g(w)} \frac{\diff w}{ 2 \pi \mathrm{i} w},
    \end{equation}
    where the contour is positively oriented and contains $0$, $u_i$ for $i=1,\dots,n$ and no other singularity of the integrand. On the other hand the action of the operator $\overline{\mathfrak{D}}_{\A_{n-1}}$ on a factorized function is
    \begin{equation} \label{eq:integral rep Dbar a}
        \frac{\overline{\mathfrak{D}}_{\A_{n-1}} [h(x_1) \cdots h(x_n)]}{ h(x_1) \cdots h(x_n) } = \frac{1}{2\pi \mathrm{i}} \oint_{ x_1,\dots, x_n } \frac{\prod_{i=1}^{l-1} (z+a_i) }{\prod_{i=1}^l (z-x_i)} \frac{h(z/q)}{h(z)} \diff z.
    \end{equation}
    Matching the left and right hand side of \eqref{eq:dual CI kernel} reduces then to a simple manipulation of the above integral expressions; see \cite[Theorem 8.2]{BufetovMucciconiPetrov2018} for details.
\end{proof}

\begin{remark}
    A consequence of \Cref{thm:orthogonality inhom sqW}, \Cref{thm:sqW are a basis} below is that the $q$-difference operators $\mathfrak{D}_{\B_{n-1}}$ and $\overline{\mathfrak{D}}_{\A_{n-1}}$ commute with each other. This fact is a priori rather nontrivial, although it can be verified directly for instance through the integral representation of the action of these operators \eqref{eq:integral rep D b}, \eqref{eq:integral rep Dbar a}; see \cite[Proposition 3.11]{MucciconiPetrov2020} for a similar calculation.
\end{remark}

\section{Proofs of main theorems} \label{sec:proofs}

\subsection{Torus scalar product for $q$-Whittaker polynomials}

For any two functions $f,g: \mathbb{T}^n \to \mathbb{C}$, we introduce the scalar product
\begin{equation}
    \langle f \, | \,g \rangle_n = \oint_{\mathbb{T}^n} f(\Z_n) g(\Zbar_n) \Delta(\Z_n) \mathrm{d} z_1 \cdots \mathrm{d} z_n,
\end{equation}
where the density $\Delta(\Z_n)$ was defined in \eqref{eq:Delta}. The scalar product $\langle\cdot | \cdot \rangle_n$ is a the $t=0$ case of Macdonald's torus scalar product; see \cite[VI.9]{Macdonald1995}. The $q$-Whittaker polynomials $P$ satisfy the orthogonality
\begin{equation} \label{eq:orthogonality qW}
    \left\langle P_\lambda \, \middle| \, P_\mu \right\rangle_n = \mathbf{1}_{\lambda=\mu} \mathsf{c}_n(\lambda) ,
    \qquad
    \text{where}
    \qquad
    \mathsf{c}_n(\lambda ) = \prod_{k=1}^{n-1} \frac{(q;q)_{\lambda_k-\lambda_{k+1}}}{(q;q)_\infty}.
\end{equation}
It is clear, using the definition \eqref{eq:sqW scalar product} of the scalar product $\llangle | \rrangle$ and the density $\Delta(\Z_n|\A_{n-1} , \B_{n-1})$ from \eqref{eq: delta A B}, that
\begin{equation} \label{eq:relation two scalar products}
    \llangle f \, | \, g \rrangle_n = \frac{\prod_{k=1}^{n-1} (a_k b_k;q)_\infty}{H_q(\A_{n-1}; \B_{n-1})} \left\langle f(\Z_n) H_q( \Z_n; \B_{n-1} ) \, \middle| \, g(\Z_n)  H_q( \Z_n; \A_{n-1} ) \right\rangle_n.
\end{equation}

\subsection{Proof of \Cref{thm:orthogonality inhom sqW}} 

The proof of \Cref{thm:orthogonality inhom sqW} uses a number of preliminary results, which we organize in several lemmas.

\begin{lemma} \label{lem:evaluation scalar product kernels}
    Fix integers $0<m<n$ and $k>0$. Then, we have
    \begin{equation} \label{eq:scalar product factorized quantities}
        \left\llangle \frac{H_q(\Z_n; \Y_m) H_q(\A_{n-1};\B_m) }{H_q(\Z_n; \B_{m}) H_q(\A_{n-1};\Y_m)} \, \middle| \, \frac{E (\Z_n; -\U_k) }{E(\B_{n-1}; -\U_k)}\right\rrangle_n = \frac{\prod_{k=1}^{n-1} (a_k b_k;q)_\infty}{(q;q)_\infty^{n-1}} \frac{  E(\Y_m; -\U_k) }{ E(\B_{m}; -\U_k) },
    \end{equation}
    where we recall that functions $H_q,E$ were defined in \eqref{eq:H}, \eqref{eq:E} respectively.
\end{lemma}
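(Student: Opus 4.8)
The idea is to reduce the claimed identity to the basic Cauchy identity \eqref{eq:inhom CI sqW sqW} together with the dual Cauchy identity \eqref{eq:dual CI}, exploiting the fact that both sides of \eqref{eq:scalar product factorized quantities} are already packaged as generating functions for the two families $\sqwF_\lambda$ and $\shlF_{\lambda'}$. Concretely, I would expand the first slot of the scalar product using \eqref{eq:inhom CI sqW sqW},
\begin{equation}
    \frac{H_q(\Z_n; \Y_m) H_q(\A_{n-1};\B_m)}{H_q(\Z_n; \B_{m}) H_q(\A_{n-1};\Y_m)} = \sum_\lambda \sqwF_\lambda(\Z_n|\A_{n-1},\B_{n-1}) \, \sqwFBK_\lambda^*(\Y_m|\B_m,\A_m),
\end{equation}
and expand the second slot using \eqref{eq:dual CI},
\begin{equation}
    \frac{E(\Z_n;-\U_k)}{E(\B_{n-1};-\U_k)} = \sum_{\nu \in \mathrm{Box}(k,n)} \shlF_{\nu'}(\U_k|\B,\A)\, \sqwF_\nu(\Z_n|\B_{n-1},\A_{n-1}),
\end{equation}
where I have taken the $\sqwF$-expansion with inhomogeneities $\A,\B$ swapped so that the pairing inside $\llangle \cdot | \cdot \rrangle_n$ matches the orthogonality relation \eqref{eq:orthogonality sqW} exactly (note the $g(\Zbar_n)$ in the definition \eqref{eq:sqW scalar product} of the scalar product). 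Substituting both expansions and interchanging sums with the torus integral, the scalar product becomes a double sum over $\lambda,\nu$ of $\llangle \sqwF_\lambda(\cdot|\A_{n-1},\B_{n-1}) \mid \sqwF_\nu(\cdot|\B_{n-1},\A_{n-1}) \rrangle_n$ times the $\Z$-independent coefficients, which by \Cref{thm:orthogonality inhom sqW} collapses to a single sum over $\lambda=\nu$.

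**Assembling the answer.** After the collapse, the left-hand side equals
\begin{equation}
    \sum_{\lambda \in \mathrm{Box}(k,n)} \mathsf{c}_n(\lambda|\A_{n-1},\B_{n-1}) \, \sqwFBK_\lambda^*(\Y_m|\B_m,\A_m) \, \shlF_{\lambda'}(\U_k|\B,\A).
\end{equation}
Now I would recognize the product $\mathsf{c}_n(\lambda|\A_{n-1},\B_{n-1}) \, \sqwFBK_\lambda^*(\Y_m|\B_m,\A_m)$: unwinding \eqref{eq:c AB}, \eqref{eq:dual sqWBK} and \eqref{eq:psi}, the $\lambda$-dependent $q$-Pochhammer factors telescope so that this product is, up to the global constant $\prod_{k=1}^{n-1}(a_kb_k;q)_\infty/(q;q)_\infty^{\,n-1}$, simply a multiple of $\sqwFBK_\lambda(\Y_m|\B_m,\A_m)$ with the "right" $\psi$-normalization to match the Pieri/dual-Cauchy pairing with $\shlF_{\lambda'}$. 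Then the remaining sum $\sum_{\lambda}\sqwFBK_\lambda(\Y_m|\cdots)\,\shlF_{\lambda'}(\U_k|\cdots)$ is precisely the dual Cauchy identity \eqref{eq:dual CI} read in the variables $(\Y_m,\U_k)$ (with roles of $\A,\B$ interchanged), which evaluates to $E(\Y_m;-\U_k)/E(\B_m;-\U_k)$, yielding the right-hand side of \eqref{eq:scalar product factorized quantities}.

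**Main obstacle.** The genuine content is not the integration step — that is a formal manipulation once convergence is granted — but rather the bookkeeping of normalizations: one must check that the constant $\mathsf{c}_n(\lambda|\A_{n-1},\B_{n-1})$ combines with the $\psi_\lambda$ appearing in $\sqwFBK_\lambda^*$ to produce exactly the $\lambda$-independent prefactor times the un-starred $\sqwFBK_\lambda$ that the dual Cauchy identity \eqref{eq:dual CI} needs, with all the $\A\leftrightarrow\B$ swaps consistent. A secondary technical point is the interchange of summation and integration: since $|a_i|,|b_i|<1$ and $\Z_n\in\mathbb{T}^n$, for $\Y_m,\U_k$ in a small enough polydisc all the series converge absolutely and uniformly on $\mathbb{T}^n$, so Fubini applies; I would state this briefly and then, if desired, extend to general $\Y_m,\U_k$ by analytic continuation since both sides of \eqref{eq:scalar product factorized quantities} are rational in those variables. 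One should also double-check that only $\lambda\in\mathrm{Box}(k,n)$ contribute — forced by the support of $\shlF_{\nu'}$ in \eqref{eq:dual CI} — so that the sum is genuinely finite and no tail estimate is needed beyond the Cauchy-identity expansion in the first slot.
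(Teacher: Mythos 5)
Your proposed proof is \emph{circular}: you invoke \Cref{thm:orthogonality inhom sqW} to collapse the double sum over $\lambda,\nu$, but \Cref{lem:evaluation scalar product kernels} is itself a key ingredient in the paper's proof of \Cref{thm:orthogonality inhom sqW}. The logical chain in the paper runs
\Cref{lem:evaluation scalar product kernels} $\Rightarrow$ \Cref{lem:integral rep sqW} $\Rightarrow$ \Cref{lem:finite expansion inhom sqw} $\Rightarrow$ \Cref{thm:orthogonality inhom sqW}, so the orthogonality relation \eqref{eq:orthogonality sqW} is simply not available when proving this lemma. (Your bookkeeping is otherwise correct: $\mathsf{c}_n(\lambda|\A_{n-1},\B_{n-1})\,\psi_\lambda(\A_m,\B_m)$ does telescope to the constant $\prod_{k=1}^{n-1}(a_kb_k;q)_\infty/(q;q)_\infty^{n-1}$ for $\lambda\in\mathbb{Y}_m$, and the residual sum is the dual Cauchy identity — so the argument would be valid as a \emph{consistency check} of the main theorem, but not as a proof of this lemma.)

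The paper's argument is genuinely different and avoids any appeal to spin $q$-Whittaker orthogonality. It first uses the conjugation relation \eqref{eq:relation two scalar products} to rewrite the $\llangle\,\cdot\,|\,\cdot\,\rrangle_n$ pairing as a \emph{classical} $q$-Whittaker torus scalar product $\langle\,\cdot\,|\,\cdot\,\rangle_n$ of two fully factorized kernels, absorbing the $H_q(\Z_n;\B_{n-1})$ and $H_q(\Z_n;\A_{n-1})$ weights into the two slots. It then expands each factorized slot via the classical $q$-Whittaker Cauchy (and dual Cauchy) identities into $P_\lambda(\Z_n;q,0)$-series — not $\sqwF_\lambda$-series — and applies the \emph{known} orthogonality \eqref{eq:orthogonality qW} of $P_\lambda(\,\cdot\,;q,0)$ from Macdonald. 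The resulting sums are re-summed back into products of $H_q$ and $E$, and a cancellation of $H_q(\A_{n-1};\tau^m\B_{n-1})$ and $E(\tau^m\B_{n-1};-\U_k)$ factors produces the right-hand side. To fix your proof you would need to perform exactly this reduction to the $t=0$ Macdonald scalar product rather than invoking the spin orthogonality.
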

\begin{proof}
    The left hand side of \eqref{eq:scalar product factorized quantities} can be simplified as
    \begin{equation}
        \begin{split}
            &\frac{H_q(\A_{n-1};\B_m)}{H_q(\A_{n-1};\Y_m) E(\B_{n-1}; -\U_k) } \left\llangle \frac{H_q(\Z_n; \Y_m)}{H_q(\Z_n; \B_{m})} \, \middle| \, E (\Z_n; -\U_k)\right\rrangle_n
            \\
            & = \frac{H_q(\A_{n-1};\B_{m}) \prod_{k=1}^{n-1} (a_k b_k;q)_\infty }{H_q(\A_{n-1};\Y_m) E(\B_{n-1} ; -\U_k) H_q( \A_{n-1}; \B_{n-1}) } 
            \\
            & \qquad\times \left\langle \frac{H_q(\Z_n; \Y_m)}{H_q(\Z_n; \B_{m})} H_q( \Z_n; \B_{n-1}) \, \middle| \, E (\Z_n; -\U_k) H_q(\Z_n; \A_{n-1})  \right\rangle_n
            \\
            & = \frac{\prod_{k=1}^{n-1} (a_k b_k;q)_\infty}{H_q(\A_{n-1};\Y_m) H_q(\A_{n-1}; \tau^m\B_{n-1} ) E(\B_{n-1}; -\U_k) } 
            \\
            &\qquad\times \left\langle H_q(\Z_n; \Y_m, \tau^m\B_{n-1} ) \, \middle| \, E (\Z_n; -\U_k) H_q(\Z_n; \A_{n-1})  \right\rangle_n
        \end{split}
    \end{equation}
    where $\tau^m\B_{n-1}=(b_{m+1},\dots,b_{n-1})$ as in \eqref{eq:tau k}. The factorized expressions in the scalar product in the right hand side can be expanded as
    \begin{equation}
        \begin{aligned}
            H_q(\Z_n; \Y_m, \tau^m\B_{n-1} ) = \sum_{\lambda} P_\lambda(\Z_n;q,0) Q_\lambda(\Y_m, \tau^m\B_{n-1} ;q,0 )
            \\
            E (\Z_n; -\U_k) H_q(\Z_n; \A_{n-1}) = \sum_{\mu} P_\mu(\Z_n;q,0) Q_\mu(\A_{n-1},-\widetilde{\U_k};q,0).
        \end{aligned}
    \end{equation}
    Here, in the first expression, the term $Q_\lambda(\Y_m, \tau^m\B_{n-1} ;q,0)$ is the $q$-Whittaker polynomial $Q_\lambda$ with $n-1$ variables specialized as $(y_1,\dots,y_m,b_{m+1},\dots,b_{n-1})$, whereas in the second line one can define the $Q_\mu$ term through the orthogonality \eqref{eq:orthogonality qW} as
    \begin{equation}
            Q_\mu(\A_{n-1},-\widetilde{\U_k};q,0) = \frac{1}{\mathsf{c}_n(\lambda)} \left \langle E (\Z_n; -\U_k) H_q(\Z_n; \A_{n-1}) \middle| P_\mu(\Z_n;q,0) \right \rangle_n.
    \end{equation}
    Using the above expansions we find
    \begin{equation}
        \begin{aligned}
            & \left\langle H_q(\Z_n; \Y_m, \tau^m \B_{n-1}) \, \middle| \, H_q(\Z_n; \A_{n-1},-\widetilde{\U_k})  \right\rangle_n
            \\
            &
            = \sum_{\lambda ,\mu} \left\langle P_\lambda(\Z_n;q,0) \, \middle| \, P_\mu(\Z_n;q,0) \right\rangle_n Q_\lambda( \Y_m, \tau^m \B_{n-1} ;q,0 ) Q_\mu( \A_{n-1},-\widetilde{\U_k} ;q,0 )
            \\
            &
            = \sum_{\lambda ,\mu} \mathsf{c}_n(\lambda) Q_\lambda ( \Y_m, \tau^m\B_{n-1} ;q,0 ) Q_\mu( \A_{n-1},-\widetilde{\U_k} ; q,0 )
            \\
            &
            = \frac{1}{(q;q)_\infty^{n-1}} \sum_{\lambda}  P_\lambda ( \Y_m, \tau^m\B_{n-1} ;q,0 ) Q_\mu( \A_{n-1}, -\widetilde{\U_k} ;q,0 )
            \\
            & = \frac{1}{(q;q)_\infty^{n-1}}
            H_q(\Y_m; \A_{n-1}) H_q( \tau^m \B_{n-1}; \A_{n-1}) E(\Y_m; \U_k) E(\tau^m\B_{n-1}; -\U_k),
        \end{aligned}
    \end{equation}
    where we used the fact that
    \begin{equation}
        \mathsf{c}_n(\lambda) Q_\lambda(\Y_m,\tau^m\B_{n-1} ;q,0 ) = \frac{1}{(q;q)_\infty^{n-1}} P_\lambda (\Y_m, \tau^m\B_{n-1} ;q,0 ),
        \qquad \text{for all $\lambda$ such that } \lambda_n=0.
    \end{equation}
    One can see that the previous identity hold directly from \eqref{eq:sqW at q=0}.
    Then, we have
     \begin{equation}
        \begin{aligned}
            &\frac{H_q(\A_{n-1};\B_m)}{H_q(\A_{n-1};\Y_m) E(\B_{n-1}; -\U_k) } \left\llangle \frac{H_q(\Z_n; \Y_m)}{H_q(\Z_n; \B_{m})} \, \middle| \, E (\Z_n; -\U_k)\right\rrangle_n
            \\
            &
            = \frac{\prod_{k=1}^{n-1} (a_k b_k;q)_\infty}{(q;q)_\infty^{n-1}} \frac{H_q(\Y_m; \A_{n-1}) H_q(\tau^m\B_{n-1}; \A_{n-1}) E(\Y_m; -\U_k) E(\tau^m\B_{n-1} ; -\U_k)}{H_q(\A_{n-1};\Y_m) H_q(\A_{n-1};\tau^m\B_{n-1}) E(\B_{n-1};-\U_k) }
            \\
            &
            = \frac{\prod_{k=1}^{n-1} (a_k b_k;q)_\infty}{(q;q)_\infty^{n-1}} \frac{  E(\Y_m; -\U_k) }{ E(\B_{m};-\U_k) },
        \end{aligned}
    \end{equation}
    which completes the proof.
\end{proof}

\begin{lemma} \label{lem:integral rep sqW}
    Fix integers $0<m<n$. Then, for every $\lambda \in \mathbb{Y}_m$, we have
    \begin{equation} \label{eq:expansion sqW scalar prod}
        \sqwFBK_\lambda(\Y_m|\B_m, \A_m) = \prod_{k=1}^{n-1} \frac{(q;q)_\infty}{(a_k b_k;q)_\infty}  \left\llangle \frac{H_q(\Z_n; \Y_m) H_q(\A_{n-1};\B_m) }{H_q(\Z_n; \B_{m}) H_q(\A_{n-1};\Y_m)} \, \middle| \, \sqwF_\lambda(\Z_n | \B_{n-1}, \A_{n-1} ) \right\rrangle_n.
    \end{equation}
    Moreover, the following expansion holds
    \begin{equation} \label{eq:expansion sqW infinite sum}
        \begin{split}
            &\sqwFBK_\lambda(\Y_m|\B_m, \A_m) 
            \\
            &= \prod_{k=1}^{n-1} \frac{(q;q)_\infty}{(a_k b_k;q)_\infty} \sum_{\mu}  \left\llangle \sqwF_\mu(\Z_n| \A_{n-1},\B_{n-1})  \, \middle| \, \sqwF_\lambda(\Z_n | \B_{n-1}, \A_{n-1} ) \right\rrangle_n \psi_\mu(\A,\B) \sqwFBK_\mu(\Y_m | \B_m, \A_m ).
        \end{split}
    \end{equation}
\end{lemma}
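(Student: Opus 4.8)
The plan is to derive \eqref{eq:expansion sqW scalar prod} from the Cauchy identity \eqref{eq:inhom CI sqW sqW} by pairing it against $\sqwF_\lambda$ in the variable $\Z_n$, and then obtain \eqref{eq:expansion sqW infinite sum} by inserting, in place of the Cauchy kernel, its expansion back into spin $q$-Whittaker polynomials via the dual Cauchy identity (or equivalently via the same identity \eqref{eq:inhom CI sqW sqW} read the other way). Concretely, first I would apply \Cref{lem:evaluation scalar product kernels} with $\U_k$ the empty list (or more precisely, combine it with the Cauchy expansion): by \eqref{eq:inhom CI sqW sqW} with the roles of the parameter sets $\A_{n-1},\B_{n-1}$ swapped, we have
\begin{equation}
    \frac{H_q(\Z_n;\Y_m)H_q(\A_{n-1};\B_m)}{H_q(\Z_n;\B_m)H_q(\A_{n-1};\Y_m)} = \sum_{\mu} \sqwF_\mu(\Z_n|\A_{n-1},\B_{n-1}) \,\sqwFBK^*_\mu(\Y_m|\B_m,\A_m),
\end{equation}
where $\sqwFBK^*_\mu = \psi_\mu(\B_m,\A_m)\sqwFBK_\mu = \psi_\mu(\A_m,\B_m)\sqwFBK_\mu$ by \eqref{eq:dual sqWBK} and the symmetry of $\psi$ noted after it. Pairing both sides with $\sqwF_\lambda(\Z_n|\B_{n-1},\A_{n-1})$ under $\llangle\,\cdot\,|\,\cdot\,\rrangle_n$ (which acts in the $\Z_n$ variables only, treating the $\Y_m$-dependent coefficients as scalars) immediately yields \eqref{eq:expansion sqW infinite sum}, modulo the constant $\prod_k (q;q)_\infty/(a_kb_k;q)_\infty$.

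For \eqref{eq:expansion sqW scalar prod}, the idea is the reverse: I would expand $\sqwF_\lambda(\Z_n|\B_{n-1},\A_{n-1})$ against the Cauchy kernel directly. Using \eqref{eq:inhom CI sqW sqW} and the orthogonality-type evaluation in \Cref{lem:evaluation scalar product kernels} — which tells us how the scalar product of the Cauchy kernel against the $E$-type kernel factorizes — one recognizes that integrating the Cauchy kernel in the $\Y_m$ parameters against $\sqwF_\lambda$ reproduces exactly $\sqwFBK_\lambda(\Y_m|\B_m,\A_m)$ up to the stated constant. More carefully: by \eqref{eq:inhom CI sqW sqW} the Cauchy kernel equals $\sum_\nu \sqwF_\nu(\Z_n|\A_{n-1},\B_{n-1})\sqwFBK^*_\nu(\Y_m|\B_m,\A_m)$; substituting this and linearity of the scalar product reduces the right-hand side of \eqref{eq:expansion sqW scalar prod} to $\prod_k \frac{(q;q)_\infty}{(a_kb_k;q)_\infty}\sum_\nu \psi_\nu(\A_m,\B_m)\sqwFBK_\nu(\Y_m|\B_m,\A_m)\,\llangle \sqwF_\nu(\Z_n|\A_{n-1},\B_{n-1})\,|\,\sqwF_\lambda(\Z_n|\B_{n-1},\A_{n-1})\rrangle_n$, which is exactly the right-hand side of \eqref{eq:expansion sqW infinite sum}; so the two displays \eqref{eq:expansion sqW scalar prod} and \eqref{eq:expansion sqW infinite sum} are in fact equivalent given the Cauchy identity, and it suffices to prove one of them.

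To establish \eqref{eq:expansion sqW scalar prod} itself, I would start from the relation \eqref{eq:relation two scalar products} converting $\llangle\,\cdot\,|\,\cdot\,\rrangle_n$ into the plain $q$-Whittaker torus product $\langle\,\cdot\,|\,\cdot\,\rangle_n$, collect the $H_q$ factors carefully (the kernel $H_q(\Z_n;\Y_m)/H_q(\Z_n;\B_m)$ combines with $H_q(\Z_n;\B_{n-1})$ and $H_q(\Z_n;\A_{n-1})$ into products $H_q(\Z_n;\Y_m,\tau^m\B_{n-1})$ and similar, exactly as in the proof of \Cref{lem:evaluation scalar product kernels}), then expand the factorized $H_q$ kernels in $q$-Whittaker polynomials $P,Q$ à la Cauchy, use the $q$-Whittaker orthogonality \eqref{eq:orthogonality qW}, and finally repackage the resulting sum over $\lambda$ (with $\lambda_n=0$, so that $\mathsf{c}_n(\lambda)Q_\lambda = (q;q)_\infty^{-(n-1)}P_\lambda$) into the $\sqwFBK$ form. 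The main obstacle I anticipate is purely bookkeeping: tracking the tower of $H_q$-normalization constants through the conversion \eqref{eq:relation two scalar products}, the shift $\tau^m\B_{n-1}$, and the passage from $Q$ to $P$ so that the final constant comes out to precisely $\prod_{k=1}^{n-1}\frac{(q;q)_\infty}{(a_kb_k;q)_\infty}$ with no leftover factors; there is no conceptual difficulty, but the constants must be handled with care, and one must also justify exchanging the (a priori infinite) sum over $\mu$ with the scalar product, which follows since the $\sqwF$ are polynomials of bounded degree once $\lambda$ is fixed, so only finitely many $\mu$ contribute a nonzero pairing.
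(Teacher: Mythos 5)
Your first half --- deriving \eqref{eq:expansion sqW infinite sum} from \eqref{eq:expansion sqW scalar prod} by expanding the Cauchy kernel with \eqref{eq:inhom CI sqW sqW} inside the scalar product --- is correct and matches the paper. The gap is in your proposed proof of \eqref{eq:expansion sqW scalar prod} itself. You suggest converting $\llangle\cdot\,|\,\cdot\rrangle_n$ to $\langle\cdot\,|\,\cdot\rangle_n$ via \eqref{eq:relation two scalar products}, collecting $H_q$ factors, expanding in $q$-Whittaker $P$'s and $Q$'s, and using the orthogonality \eqref{eq:orthogonality qW}. That strategy is exactly the proof of \Cref{lem:evaluation scalar product kernels}, but it relies crucially on \emph{both} arguments of the scalar product being fully factorized products of $H_q$'s and $E$'s, so that each admits a Cauchy-type expansion in $q$-Whittaker polynomials with explicit, known coefficients. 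Here the right slot of the pairing is $\sqwF_\lambda(\Z_n|\B_{n-1},\A_{n-1})$; after the conversion \eqref{eq:relation two scalar products} it becomes $\sqwF_\lambda(\Z_n|\B_{n-1},\A_{n-1})H_q(\Z_n;\A_{n-1})$, which is \emph{not} factorized, and its expansion in $P_\nu(\Z_n;q,0)$ has a priori unknown coefficients. The $H_q$ bookkeeping therefore does not close, and one cannot ``repackage into the $\sqwFBK$ form'' by that route --- you would simply be relabeling the unknown pairings you are trying to compute.

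The paper's key move, which your proposal omits, is to test \eqref{eq:expansion sqW scalar prod} against the spin Hall--Littlewood basis: multiply both sides by $\shlF_{\lambda'}(\U_k|\B,\A)$ and sum over $\lambda\in\mathrm{Box}(k,n)$. On the left, the dual Cauchy identity \eqref{eq:dual CI} gives $E(\Y_m;-\U_k)/E(\B_m;-\U_k)$. On the right, bringing the sum inside the pairing and applying \eqref{eq:dual CI} once more converts the troublesome $\sum_\lambda\shlF_{\lambda'}(\U_k|\B,\A)\,\sqwF_\lambda(\Z_n|\B_{n-1},\A_{n-1})$ into the \emph{factorized} kernel $E(\Z_n;-\U_k)/E(\B_{n-1};-\U_k)$ --- which is precisely the situation handled by \Cref{lem:evaluation scalar product kernels}. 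The per-$\lambda$ identity then follows because the $\shlF_{\lambda'}$ are orthogonal (\Cref{prop:orthogonality shl}), so matching all the tested identities recovers the coefficients individually. In short, the $\shlF$-testing step is what turns the non-factorized $\sqwF_\lambda$ into something the $q$-Whittaker orthogonality argument can handle, and without it the proof does not go through.
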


\begin{proof}
    First notice that the expansion \eqref{eq:expansion sqW infinite sum} follows directly from \eqref{eq:expansion sqW scalar prod}, employing the Cauchy identity \eqref{eq:inhom CI sqW sqW} to expand the expression in the left term of the scalar product. Therefore, we only need to prove \eqref{eq:expansion sqW scalar prod}. By the dual Cauchy identity \eqref{eq:dual CI} and the orthogonality of the spin Hall-Littewood rational functions \eqref{eq:orthogonality shl}, it suffices to prove that the summation
    \begin{equation} \label{eq:summation sHL scalar prod}
        \sum_{\lambda \in \mathrm{Box}(k,n) } \shlF_{\lambda'} (\U_k | \B, \A) \prod_{k=1}^{n-1} \frac{(q;q)_\infty}{(a_k b_k;q)_\infty}  \left\llangle \frac{H_q(\Z_n; \Y_m) H_q(\A_{n-1};\B_m) }{H_q(\Z_n; \B_{m}) H_q(\A_{n-1};\Y_m)} \,  \middle| \, \sqwF_\lambda(\Z_n | \B_{n-1}, \A_{n-1} ) \right\rrangle_n 
    \end{equation}
    equals
    \begin{equation} \label{eq:dual CI kernel}
        \frac{E(\Y_m; -\U_k)}{E(\B_{m}; -\U_k)}.
    \end{equation}
    Bringing the summation inside the scalar product and using the the dual Cauchy identity \eqref{eq:dual CI}, the summation \eqref{eq:summation sHL scalar prod} becomes
    \begin{equation}
        \prod_{k=1}^{n-1} \frac{(q;q)_\infty}{(a_k b_k;q)_\infty} \left\llangle \frac{H_q(\Z_n; \Y_m) H_q(\A_{n-1};\B_m) }{H_q(\Z_n; \B_{m}) H_q(\A_{n-1};\Y_m)} \, \middle| \, \frac{E (\Z_n; -\U_k) }{E(\B_{n-1}; -\U_k)}\right\rrangle_n,
    \end{equation}
    which, by \Cref{lem:evaluation scalar product kernels}, equals \eqref{eq:dual CI kernel}, completing the proof of \eqref{eq:expansion sqW scalar prod}. 
\end{proof}

\begin{lemma} \label{lem:adjoint}
    The operators $\overline{\mathfrak{D}}_{\A_{n-1}}$ and $\overline{\mathfrak{D}}_{\B_{n-1}}$ are adjoint with respect to the scalar product $\llangle \cdot \, | \, \cdot \rrangle_n$. Similarly, operators $\mathfrak{D}_{\A_{n-1}}, \mathfrak{D}_{\B_{n-1}}$ are adjoint with respect to the same scalar product.
\end{lemma}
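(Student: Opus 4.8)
The plan is to use the conjugation formulas \eqref{eq:D B conjugation} and \eqref{eq:D bar by conjugation}, which reduce the statement to the self-adjointness of the bare $q$-Whittaker operators $D_1$ and $\overline{D}_1$ with respect to the (parameter-free) Macdonald torus scalar product $\langle\,\cdot\,|\,\cdot\,\rangle_n$, together with the relation \eqref{eq:relation two scalar products} tying $\llangle\,\cdot\,|\,\cdot\,\rrangle_n$ to $\langle\,\cdot\,|\,\cdot\,\rangle_n$. Concretely, I would first record that $D_1$ is self-adjoint for $\langle\,\cdot\,|\,\cdot\,\rangle_n$; this is classical (it is the $t=0$ case of Macdonald's operator being self-adjoint for the torus scalar product, see \cite[VI.9]{Macdonald1995}), and can be checked directly by writing the action of $D_1$ as a contour integral as in \eqref{eq:integral rep D b} and using that shifting $z_i \mapsto q z_i$ combined with $z_i \mapsto z_i^{-1}$ preserves the density $\Delta(\Z_n)$ up to the symmetrization already present in \eqref{eq:Delta}. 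The same argument, using \eqref{eq:integral rep Dbar a}, gives that $\overline{D}_1$ is self-adjoint for $\langle\,\cdot\,|\,\cdot\,\rangle_n$.

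Next I would unwind the conjugations. By \eqref{eq:relation two scalar products}, for symmetric $f,g$ one has, up to the constant $C = \prod_{k=1}^{n-1}(a_kb_k;q)_\infty / H_q(\A_{n-1};\B_{n-1})$,
\begin{equation}
    \llangle \mathfrak{D}_{\A_{n-1}} f \,|\, g \rrangle_n = C\left\langle H_q(\Z_n;\B_{n-1})\, \mathfrak{D}_{\A_{n-1}}[f] \,\middle|\, H_q(\Z_n;\A_{n-1})\, g \right\rangle_n.
\end{equation}
Now $H_q(\Z_n;\B_{n-1})\,\mathfrak{D}_{\A_{n-1}}[f] = H_q(\Z_n;\B_{n-1}) H_q(\Z_n;\A_{n-1})^{-1} D_1[H_q(\Z_n;\A_{n-1}) f]$ by \eqref{eq:D B conjugation} applied with parameters $\A_{n-1}$, while on the other side $H_q(\Z_n;\A_{n-1})\,g$ is what pairs against it; the cross-factor $H_q(\Z_n;\B_{n-1})H_q(\Z_n;\A_{n-1})^{-1}$ is the obstruction to a clean cancellation. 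The key observation is that this cross-factor is \emph{not} invariant under $z_i\mapsto \overline z_i$, so one cannot pair $\mathfrak{D}_{\A_{n-1}}$ with itself; instead, moving $D_1$ across using its self-adjointness for $\langle\,\cdot\,|\,\cdot\,\rangle_n$ produces exactly $H_q(\Z_n;\A_{n-1})\,\mathfrak{D}_{\B_{n-1}}[g]$ on the right after re-conjugating with $H_q(\Z_n;\B_{n-1})$ — the roles of $\A_{n-1}$ and $\B_{n-1}$ swap because the two copies of $H_q$ sit in the two different slots of the scalar product. Carrying this bookkeeping through yields $\llangle \mathfrak{D}_{\A_{n-1}} f \,|\, g \rrangle_n = \llangle f \,|\, \mathfrak{D}_{\B_{n-1}} g \rrangle_n$. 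The argument for $\overline{\mathfrak{D}}_{\A_{n-1}}$ and $\overline{\mathfrak{D}}_{\B_{n-1}}$ is identical, using \eqref{eq:D bar by conjugation}, the self-adjointness of $\overline{D}_1$, and the factors $H_q(\Zbar_n;\A_{n-1})$, $H_q(\Zbar_n;\B_{n-1})$ that appear in the density \eqref{eq: delta A B} — note that these enter through the $\Zbar_n$-arguments, which is precisely why it is $\overline{\mathfrak{D}}_{\A}$ that gets conjugated by $H_q(\Zbar_n;\A_{n-1})$ in \eqref{eq:D bar by conjugation}.

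The main obstacle is the bookkeeping of which $H_q$ factor conjugates which operator and sits in which slot of the scalar product: the density \eqref{eq: delta A B} is manifestly asymmetric in $\A_{n-1}$ versus $\B_{n-1}$ (the $\A$-factor is evaluated at $\Zbar_n$, the $\B$-factor at $\Z_n$), and the scalar product \eqref{eq:sqW scalar product} evaluates $g$ at $\Zbar_n$; keeping track of these three asymmetries simultaneously is what forces the $\A\leftrightarrow\B$ swap in the adjoint and is the only place an error is likely. A clean way to organize it is to absorb $H_q(\Z_n;\B_{n-1})$ into $f$ and $H_q(\Z_n;\A_{n-1})$ into $g$ once and for all (equivalently, work on the $\langle\,\cdot\,|\,\cdot\,\rangle_n$ side from the start via \eqref{eq:relation two scalar products}), at which point $\mathfrak{D}_{\A_{n-1}}$ literally becomes $D_1$ sandwiched between $H_q(\cdot;\A_{n-1})$-conjugations and the claim is the statement that $D_1$, conjugated on one side by $H_q(\cdot;\A_{n-1})$ and paired against something conjugated by $H_q(\cdot;\A_{n-1})$, equals $D_1$ paired the other way with the roles of the conjugating parameter tracked through the $\Zbar$ substitution — i.e. it reduces to the already-noted self-adjointness of $D_1$. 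I expect no genuine analytic difficulty: all contours are the unit torus, all integrands are holomorphic there under the standing assumption $|a_i|,|b_i|<1$, and the $q$-shift contour deformations are the same ones used in \cite[VI.9]{Macdonald1995} and in \cite{MucciconiPetrov2020}.
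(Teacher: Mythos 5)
Your overall strategy — reduce to self‑adjointness of $D_1$, $\overline{D}_1$ via the conjugation formulas \eqref{eq:D B conjugation}, \eqref{eq:D bar by conjugation} together with the relation \eqref{eq:relation two scalar products} — is exactly the paper's. However, the identity you arrive at for the pair $\mathfrak{D}_{\A_{n-1}}$, $\mathfrak{D}_{\B_{n-1}}$ has the wrong orientation, and this is a genuine error rather than a bookkeeping slip. Because $\llangle\cdot\,|\,\cdot\rrangle_n$ is not symmetric (one has $\llangle f\,|\,g\rrangle_n$ with parameters $(\A_{n-1},\B_{n-1})$ equal to $\llangle g\,|\,f\rrangle_n$ with the parameters \emph{swapped}), ``adjoint'' is a directed statement. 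The identity the argument actually produces, and the one that is used in \Cref{lem:prelim orthogonality}, is
\begin{equation*}
\llangle\, \mathfrak{D}_{\B_{n-1}}[f]\,|\,g \,\rrangle_n = \llangle\, f\,|\,\mathfrak{D}_{\A_{n-1}}[g]\,\rrangle_n ,
\end{equation*}
not the one with $\A$ and $\B$ interchanged that you write. The reason is visible in the very cross‑factor you flag: after applying \eqref{eq:relation two scalar products} the first slot carries $H_q(\Z_n;\B_{n-1})$, and this cancels \emph{exactly} the factor $H_q(\Z_n;\B_{n-1})^{-1}$ produced by conjugating $\mathfrak{D}_{\B_{n-1}}$ via \eqref{eq:D B conjugation} — both sit in the same slot at the same argument $\Z_n$, so there is no cross‑factor and no obstruction. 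Putting $\mathfrak{D}_{\A_{n-1}}$ in the first slot instead leaves the cross‑factor $H_q(\Z_n;\B_{n-1})H_q(\Z_n;\A_{n-1})^{-1}$, which does not resolve: moving it to the second slot and applying self‑adjointness of $D_1$ gives $D_1\bigl[H_q(\Zbar_n;\B_{n-1})H_q(\Zbar_n;\A_{n-1})^{-1}H_q(\Z_n;\A_{n-1})\,g\bigr]$ there, which is not $H_q(\Z_n;\B_{n-1})\,\mathfrak{D}_{\B_{n-1}}[g]$. Your claimed identity is in fact false: for $n=2$, $\A_1=(a)$, $\B_1=(b)$, $f=g=z_1+z_2$ one has $\mathfrak{D}_{\A_1}[e_1]=e_1-a(1-q)e_2$, $\mathfrak{D}_{\B_1}[e_1]=e_1-b(1-q)e_2$, and expanding $e_1,e_2$ in the two dual $\sqwF$‑bases and using \eqref{eq:orthogonality sqW} gives $\llangle e_2\,|\,e_1\rrangle_2 = a\,(ab;q)_\infty/(q;q)_\infty$ while $\llangle e_1\,|\,e_2\rrangle_2 = b\,(ab;q)_\infty/(q;q)_\infty$, whence
\begin{equation*}
\llangle\, \mathfrak{D}_{\A_1}[e_1]\,|\,e_1\,\rrangle_2 - \llangle\, e_1\,|\,\mathfrak{D}_{\B_1}[e_1]\,\rrangle_2 = (1-q)(b^2-a^2)\,\frac{(ab;q)_\infty}{(q;q)_\infty}\neq 0 .
\end{equation*}

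For $\overline{\mathfrak{D}}$ the correct orientation is $\llangle\,\overline{\mathfrak{D}}_{\A_{n-1}}[f]\,|\,g\,\rrangle_n = \llangle\,f\,|\,\overline{\mathfrak{D}}_{\B_{n-1}}[g]\,\rrangle_n$ (with $\A$, not $\B$, in the first slot), and there the cancellation mechanism is the \emph{other} one: the conjugating factor $H_q(\Zbar_n;\A_{n-1})^{-1}$ in slot one cancels the slot‑two factor $H_q(\Z_n;\A_{n-1})$ only after the latter is evaluated at $\Zbar_n$, i.e.\ it is a cross‑slot cancellation, whereas for $\mathfrak{D}_{\B_{n-1}}$ it was intra‑slot. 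So the two cases are mirror to one another rather than ``identical'' as you state, and it is precisely this asymmetry — which operator's conjugating $H_q$‑factor lands in which slot and at which argument — that forces opposite $\A\leftrightarrow\B$ orientations for $\mathfrak{D}$ and $\overline{\mathfrak{D}}$. Your instinct to worry about this bookkeeping was right; the resolution is to choose the operator whose conjugating factor can actually be absorbed, namely $\mathfrak{D}_{\B_{n-1}}$ in slot one for the $\mathfrak{D}$‑pair and $\overline{\mathfrak{D}}_{\A_{n-1}}$ in slot one for the $\overline{\mathfrak{D}}$‑pair.
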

\begin{proof}
    This is the result of a simple computation which we report for completeness. Using \eqref{eq:D bar by conjugation} and \eqref{eq:relation two scalar products}, we have
    \begin{equation}
        \begin{split}
            & \left \llangle \overline{\mathfrak{D}}_{\A_{n-1}} \left[ f(\Z_n) \right] \, \middle| \, g(\Z_n) \right \rrangle_n 
            \\
            &= \left\llangle H_q(\Zbar_n;\A_{n-1})^{-1} \overline{D}_1 \left[ H_q(\Zbar_n;\A_{n-1}) f(\Z_n) \right]  \, \middle| \, g(\Z_n) \right\rrangle_n
            \\
            & = \frac{1}{H_q(\A_{n-1},\B_{n-1})}
            \\
            & \qquad \times \left\langle H_q( \Zbar_n;\A_{n-1})^{-1} \overline{D}_1 \left[ H_q(\Zbar_n;\A_{n-1}) f(\Z_n) \right] H_q(\Z_n; \B_{n-1}) \, \middle| \, g(\Z_n) H_q(\Z_n; \A_{n-1}) \right\rangle_n
            \\
            & = \frac{1}{H_q(\A_{n-1},\B_{n-1})}  \left\langle  \overline{D}_1 \left[ H_q(\Zbar_n;\A_{n-1}) f(\Z_n) \right]  \, \middle| \, H_q(\Zbar_n; \B_{n-1}) g(\Z_n) \right\rangle_n.
        \end{split}
    \end{equation}
    The action on $n$-variate Laurent polynomials of the operator $\overline{D}_1$ is self adjoint with respect to the scalar product $\langle \, \cdot \, | \, \cdot \, \rangle_n$; see [Macdonald Section VI.9]. Thus, the right hand side is equal to
    \begin{equation}
        \begin{split}
            & \frac{1}{H_q(\A_{n-1},\B_{n-1})}  \left\langle  H_q(\Zbar_n;\A_{n-1}) f(\Z_n) \, \middle| \, \overline{D}_1 \left[  H_q(\Zbar_n; \B_{n-1}) g(\Z_n) \right] \right\rangle_n
            \\
            & = \frac{1}{H_q(A_{n-1}, \B_{n-1})}
            \\
            & \qquad \times \left\langle   f(\Z_n) H_q(\Z_n; \B_{n-1}) \, \middle| \, H_q( \Zbar_n; \B_{n-1})^{-1} \overline{D}_1 \left[  H_q(\Zbar_n; \B_{n-1}) g(\Z_n) \right] H_q(\Z_n;\A_{n-1}) \right\rangle_n
            \\
            & = \left\llangle   f(\Z_n)   \, \middle| \, H_q( \Zbar_n; \B_{n-1})^{-1} \overline{D}_1 \left[  H_q( \Zbar_n; \B_{n-1}) g(\Z_n) \right] \right\rrangle_n
            \\
            & = \left\llangle   f(\Z_n) \,  \middle| \,\overline{\mathfrak{D}}_{\B_{n-1}} \left[  g(\Z_n) \right] \right\rrangle_n.
        \end{split}
    \end{equation}
    Analogous calculations show that $\mathfrak{D}_{\A_{n-1}}, \mathfrak{D}_{\B_{n-1}}$ are adjoint with respect to the scalar product $\llangle \cdot \, | \, \cdot \rrangle_n$.
\end{proof}

\begin{lemma} \label{lem:prelim orthogonality}
    For all $\lambda,\mu \in \mathbb{Y}_n$ such that $\lambda_1 \neq \mu_1$ or $\lambda_n \neq \mu_n$, we have
    \begin{equation}
        \left\llangle \sqwF_\lambda( \Z_n| \A_{n-1}, \B_{n-1} ) \, \middle| \, \sqwF_\mu( \Z_n| \B_{n-1} , \A_{n-1} ) \right\rrangle_n = 0.
    \end{equation}
\end{lemma}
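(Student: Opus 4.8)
The plan is to run the classical argument that eigenvectors attached to distinct eigenvalues of mutually adjoint operators must be orthogonal, using the two eigenrelations \Cref{thm:eigenrelation sqW lambda n} and \Cref{thm:eigenrelation sqW lambda 1} together with the adjointness statements in \Cref{lem:adjoint}. Throughout, write $f = \sqwF_\lambda(\Z_n\,|\,\A_{n-1},\B_{n-1})$ and $g = \sqwF_\mu(\Z_n\,|\,\B_{n-1},\A_{n-1})$. Since \Cref{thm:eigenrelation sqW lambda n} is an identity valid for arbitrary parameter lists, it gives not only $\mathfrak{D}_{\B_{n-1}} f = q^{\lambda_n} f$ but also, after relabelling the parameter lists, $\mathfrak{D}_{\A_{n-1}} g = q^{\mu_n} g$; likewise \Cref{thm:eigenrelation sqW lambda 1} gives $\overline{\mathfrak{D}}_{\A_{n-1}} f = q^{-\lambda_1} f$ and $\overline{\mathfrak{D}}_{\B_{n-1}} g = q^{-\mu_1} g$. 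The key point is that the operator attached to $g$ is then indexed precisely by the parameter list ($\A_{n-1}$, resp. $\B_{n-1}$) that makes it the adjoint partner of the operator applied to $f$ in \Cref{lem:adjoint}.

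First I would treat the case $\lambda_n \neq \mu_n$. Applying $\mathfrak{D}_{\B_{n-1}}$ to $f$ inside the scalar product, moving it to the second slot as $\mathfrak{D}_{\A_{n-1}}$ via \Cref{lem:adjoint}, and then invoking the eigenrelation for $g$, we get
\begin{equation}
    q^{\lambda_n}\,\llangle f \,|\, g \rrangle_n
    = \llangle \mathfrak{D}_{\B_{n-1}}[f] \,|\, g \rrangle_n
    = \llangle f \,|\, \mathfrak{D}_{\A_{n-1}}[g] \rrangle_n
    = q^{\mu_n}\,\llangle f \,|\, g \rrangle_n ,
\end{equation}
so that $(q^{\lambda_n} - q^{\mu_n})\,\llangle f \,|\, g \rrangle_n = 0$. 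Since $|q|<1$ and $\lambda_n,\mu_n$ are distinct nonnegative integers, $q^{\lambda_n}\neq q^{\mu_n}$, and hence $\llangle f \,|\, g \rrangle_n = 0$.

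The case $\lambda_1 \neq \mu_1$ is handled identically with $\mathfrak{D}$ replaced by $\overline{\mathfrak{D}}$: using \Cref{thm:eigenrelation sqW lambda 1}, the adjointness of $\overline{\mathfrak{D}}_{\A_{n-1}}$ and $\overline{\mathfrak{D}}_{\B_{n-1}}$ from \Cref{lem:adjoint}, and the eigenrelation for $g$, one obtains $(q^{-\lambda_1} - q^{-\mu_1})\,\llangle f \,|\, g \rrangle_n = 0$, and $q^{-\lambda_1}\neq q^{-\mu_1}$ again forces the scalar product to vanish. Since the hypothesis $\lambda_1 \neq \mu_1$ or $\lambda_n \neq \mu_n$ places us in at least one of the two cases, the lemma follows. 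I do not expect a genuine obstacle here; the only delicate point is the bookkeeping of which parameter list labels the operator that lands on the second slot after applying \Cref{lem:adjoint}, so that the eigenvalue produced there is $q^{\mu_n}$ (resp. $q^{-\mu_1}$) rather than $q^{\lambda_n}$ (resp. $q^{-\lambda_1}$) — everything then hinges on the elementary fact that $t \mapsto q^{t}$ is injective on the integers.
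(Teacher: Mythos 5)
Your proof is correct and coincides with the paper's own argument: both rely on the two eigenrelations (Theorems~\ref{thm:eigenrelation sqW lambda n} and~\ref{thm:eigenrelation sqW lambda 1}) combined with the adjointness of $\mathfrak{D}_{\A_{n-1}},\mathfrak{D}_{\B_{n-1}}$ (resp. $\overline{\mathfrak{D}}_{\A_{n-1}},\overline{\mathfrak{D}}_{\B_{n-1}}$) from Lemma~\ref{lem:adjoint}, and then invoke injectivity of $k\mapsto q^{k}$ on the integers. The only cosmetic difference is which of the two cases is written out explicitly — you detail $\lambda_n\neq\mu_n$ and the paper details $\lambda_1\neq\mu_1$, each leaving the other to an identical argument.
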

\begin{proof}
    This is an immediate consequence of \Cref{lem:adjoint} and of \Cref{thm:eigenrelation sqW lambda n}, \Cref{thm:eigenrelation sqW lambda 1}. For instance, considering the action of $\overline{\mathfrak{D}}_{\A_{n-1}}$, we have
    \begin{equation}
        \begin{split}
            &q^{-\lambda_1} \left\llangle \mathbb{F}_\lambda(\Z_n|\A_{n-1},\B_{n-1}) \, \middle| \, \mathbb{F}_\mu(\Z_n| \B_{n-1} , \A_{n-1} ) \right\rrangle_n\\
            &= \left\llangle \overline{\mathfrak{D}}_{\A_{n-1}} \mathbb{F}_\lambda(\Z_n|\A_{n-1},\B_{n-1}) \, \middle| \, \mathbb{F}_\mu(\Z_n| \B_{n-1} , \A_{n-1} ) \right\rrangle_n
            \\
            &= \left\llangle  \mathbb{F}_\lambda( \Z_n|\A_{n-1},\B_{n-1}) \, \middle| \, \overline{\mathfrak{D}}_{\B_{n-1}}\mathbb{F}_\mu(\Z_n| \B_{n-1} ,\A_{n-1} ) \right\rrangle_n
            \\
            & = q^{-\mu_1} \left\llangle \mathbb{F}_\lambda( \Z_n| \A_{n-1},\B_{n-1}) \, \middle| \, \mathbb{F}_\mu(\Z_n| \B_{n-1} , \A_{n-1} ) \right\rrangle_n,
        \end{split}
    \end{equation}
    which implies the claim when $\lambda_1 \neq \mu_1$. Similar basic arguments can be repeated to prove the claim in case $\lambda_n \neq \mu_n$.
\end{proof}

\begin{lemma} \label{lem:finite expansion inhom sqw}
    Fix integers $0<m<n$. Then, for every $\lambda \in \mathbb{Y}_m$, we have
    \begin{equation}
        \begin{split}
            &\sqwFBK_\lambda(\Y_m| \B_m, \A_m) 
            \\
            &= 
            \sum_{\mu \in \mathrm{Box}(\lambda_1,m) } \frac{\left\llangle \sqwF_\mu(\Z_n| \A_{n-1},\B_{n-1})   \, \middle| \, \sqwF_\lambda(\Z_n | \B_{n-1}, \A_{n-1} ) \right\rrangle_n }{\mathsf{c}_n(\mu|\A_{n-1},\B_{n-1})} \,  \sqwFBK_\mu(\Y_m | \B_m , \A_m ).
        \end{split}
    \end{equation}
\end{lemma}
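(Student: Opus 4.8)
The plan is to start from the infinite expansion \eqref{eq:expansion sqW infinite sum} in Lemma \ref{lem:integral rep sqW} and truncate it to a finite sum. Rewriting \eqref{eq:expansion sqW infinite sum} using the definition \eqref{eq:c AB} of $\mathsf{c}_n$ together with the formula \eqref{eq:psi} for $\psi_\mu$, one observes that the product $\prod_{k=1}^{n-1}\frac{(q;q)_\infty}{(a_kb_k;q)_\infty}\,\psi_\mu(\A,\B)$ is precisely $1/\mathsf{c}_n(\mu|\A_{n-1},\B_{n-1})$ up to the $\mathsf{c}_n(\mu)$ factor being written in terms of the parts $\mu_k-\mu_{k+1}$; since $\psi_\mu(\A_m,\B_m)=\psi_\mu(\B_m,\A_m)$ and since $\mu$ has length at most $m<n$, the parts with index $\ge m$ vanish, so this identification is clean. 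Hence \eqref{eq:expansion sqW infinite sum} becomes
\begin{equation*}
    \sqwFBK_\lambda(\Y_m|\B_m,\A_m)=\sum_{\mu}\frac{\left\llangle \sqwF_\mu(\Z_n|\A_{n-1},\B_{n-1})\,\middle|\,\sqwF_\lambda(\Z_n|\B_{n-1},\A_{n-1})\right\rrangle_n}{\mathsf{c}_n(\mu|\A_{n-1},\B_{n-1})}\,\sqwFBK_\mu(\Y_m|\B_m,\A_m).
\end{equation*}
It remains to show that the only $\mu$ contributing are those in $\mathrm{Box}(\lambda_1,m)$, i.e. those with $\ell(\mu)\le m$ and $\mu_1\le\lambda_1$.

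The length restriction $\ell(\mu)\le m$ is automatic: the index $\mu$ in \eqref{eq:expansion sqW infinite sum} comes from the Cauchy identity \eqref{eq:inhom CI sqW sqW} applied to the $m$-variable factor, so $\sqwFBK_\mu(\Y_m|\B_m,\A_m)$ with $\ell(\mu)>m$ either vanishes or, more to the point, the sum in \eqref{eq:inhom CI sqW sqW} ranges over $\mu\in\mathbb{Y}_m$ to begin with. The substantive point is the bound $\mu_1\le\lambda_1$. Here I would invoke Lemma \ref{lem:prelim orthogonality}: the scalar product $\left\llangle \sqwF_\mu(\Z_n|\A_{n-1},\B_{n-1})\,\middle|\,\sqwF_\lambda(\Z_n|\B_{n-1},\A_{n-1})\right\rrangle_n$ vanishes unless $\mu_1=\lambda_1$ \emph{and} $\mu_n=\lambda_n$. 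Since $\lambda\in\mathbb{Y}_m$ and $m<n$ forces $\lambda_n=0$ (the $n$-th part of a partition of length $\le m$), we already get $\mu_n=0$ and $\mu_1=\lambda_1$; in particular $\mu_1=\lambda_1\le\lambda_1$, so $\mu\in\mathrm{Box}(\lambda_1,m)$, and the sum collapses to the claimed finite range. (One could phrase this even more sharply: Lemma \ref{lem:prelim orthogonality} forces $\mu_1=\lambda_1$ exactly, so only partitions with first part exactly $\lambda_1$ appear; enlarging the index set to all of $\mathrm{Box}(\lambda_1,m)$ is harmless since the extra terms carry a zero coefficient.)

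The main obstacle — though it is more bookkeeping than genuine difficulty — is making sure the interchange of the infinite sum over $\mu$ with the scalar product is justified and that the coefficient identification $\prod_k\frac{(q;q)_\infty}{(a_kb_k;q)_\infty}\psi_\mu(\A,\B)=1/\mathsf{c}_n(\mu|\A_{n-1},\B_{n-1})$ is written correctly. For the former, I would note that for fixed $\Y_m$ the sum in \eqref{eq:inhom CI sqW sqW} is a convergent power series in the $y_j$'s (the parameters satisfy $|b_r|<1$ and one works in a neighbourhood where the geometric-type series converge absolutely), and the scalar product is a contour integral over the compact torus $\mathbb{T}^n$, so dominated convergence applies termwise; alternatively, since all we ultimately use is the finiteness coming from Lemma \ref{lem:prelim orthogonality}, one can first expand, apply the lemma to kill all but finitely many terms, and only then there is no convergence issue at all. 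I would present the argument in this second order to avoid any analytic fuss: expand via \eqref{eq:expansion sqW infinite sum}, immediately apply Lemma \ref{lem:prelim orthogonality} to see all but the $\mathrm{Box}(\lambda_1,m)$ terms vanish, and identify the surviving coefficients with $1/\mathsf{c}_n(\mu|\A_{n-1},\B_{n-1})$ using \eqref{eq:c AB} and \eqref{eq:psi}.
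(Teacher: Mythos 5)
Your proof is correct and takes essentially the same route as the paper: start from the infinite expansion \eqref{eq:expansion sqW infinite sum}, observe that the index runs over $\mu\in\mathbb{Y}_m$ (so $\ell(\mu)\le m<n$) because the expansion came from the Cauchy identity \eqref{eq:inhom CI sqW sqW} in $m$ variables, invoke Lemma~\ref{lem:prelim orthogonality} to restrict the first part, and identify $\prod_{k=1}^{n-1}\frac{(q;q)_\infty}{(a_kb_k;q)_\infty}\psi_\mu(\A,\B)=1/\mathsf{c}_n(\mu|\A_{n-1},\B_{n-1})$ for such $\mu$. Your added remarks about the interchange of sum and scalar product are unnecessary here, since that step is already absorbed into the derivation of \eqref{eq:expansion sqW infinite sum} in Lemma~\ref{lem:integral rep sqW}; otherwise the argument matches the paper's.
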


\begin{proof}
    This is an immediate consequence of \Cref{lem:prelim orthogonality} and of the expansion \eqref{eq:expansion sqW infinite sum} for the function $\sqwFBK_\lambda(\Y_m|\B_m,\A_m)$. The summation of \eqref{eq:expansion sqW infinite sum} clearly runs over partitions $\mu$ such that $\ell(\mu)\le m <n$ and for such partitions we have
    \begin{equation}
        \prod_{k=1}^{n-1} \frac{(q;q)_\infty}{(a_k/b_k;q)_\infty}  \psi_\mu(\A,\B) = \frac{1}{\mathsf{c}_n(\mu|\A_{n-1},\B_{n-1})},
    \end{equation}
    completing the proof.
\end{proof}

\begin{lemma} \label{lem:expansion monomial inhom sqw}
    Let $\lambda \in \mathbb{Y}_n$. An expansion of the following form holds
    \begin{equation} \label{eq:expansion sqWBK}
        \sqwFBK_\lambda (\X_m| \A_m,\B_m ) = \sum_{\mu \subset \mathrm{Box}(\lambda_1,m)} \alpha_{\lambda,\mu} (\A_m,\B_m) \, m_\mu(\X_m), 
    \end{equation}
    where $m_\mu$ are the monomial symmetric polynomials. Coefficients $\alpha_{\mu,\lambda} (\A_m,\B_m)$ are analytic for $a_1,\dots,a_{m}$, $b_1,\dots,b_{m}$ in a neighborhood of the origin. Moreover for $a_1=\cdots=a_{m}=b_1=\cdots=b_{m}=0$ we have $\alpha_{\mu,\lambda} = 0$ unless $\mu \leq \lambda$ and $\alpha_{\lambda,\lambda}= 1$.
\end{lemma}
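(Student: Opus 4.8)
The plan is to read the structure of $\sqwFBK_\lambda(\X_m|\A_m,\B_m)$ directly off the branching expansion \eqref{eq:branching rule} together with the explicit formula \eqref{eq:skew sqWBK} for the single-variable skew factors, and then to obtain the triangularity at $\A=\B=0$ by reduction to the $q$-Whittaker case. The first step is to record that, for $\nu\in\mathbb{Y}_k$ and $\rho\in\mathbb{Y}_{k-1}$ with $\rho\preceq\nu$, the skew polynomial $\sqwFBK_{\nu/\rho}(x|\A_k,\B_k)$ is a polynomial in $x$ of degree at most $\nu_1$ whose coefficients are analytic in $\A_k,\B_k$ in a neighbourhood of the origin. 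This is a direct inspection of \eqref{eq:skew sqWBK}: the prefactor $x^{|\nu|-|\rho|}$ cancels exactly the lowest power of $x$ arising from $\prod_r(a_r/x;q)_{\nu_r-\rho_r}$ (a polynomial in $1/x$ of total degree $|\nu|-|\rho|$, with the convention $\rho_k=0$), while $\prod_r(xb_r;q)_{\rho_r-\nu_{r+1}}$ contributes non-negative powers of $x$ summing to $|\rho|-(|\nu|-\nu_1)$; adding these contributions one finds $x$-powers lying exactly in $\{0,1,\dots,\nu_1\}$, the remaining factors $(q;q)_{\nu_r-\nu_{r+1}}$ and $(a_rb_r;q)_{\nu_r-\nu_{r+1}}$ being independent of $x$. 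Each coefficient of $x^j$ is then a polynomial in $\A_k,\B_k$ divided by $\prod_r(a_rb_r;q)_{\nu_r-\nu_{r+1}}$, and since this denominator equals $1$ at $\A=\B=0$ it is nonvanishing near the origin, which gives the analyticity.

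Next I would unfold the branching rule \eqref{eq:branching rule}, writing $\sqwFBK_\lambda(\X_m|\A_m,\B_m)$ as a (finite) sum over chains of interlacing partitions $\varnothing\preceq\nu^{(1)}\preceq\cdots\preceq\nu^{(m)}=\lambda$, each summand being the product $\prod_{i=1}^m\sqwFBK_{\nu^{(i)}/\nu^{(i-1)}}(x_i|\dots)$ with suitably $\tau$-shifted inhomogeneities. The variable $x_i$ occurs only in the $i$-th factor, and since $\rho\preceq\nu$ forces $\rho_1\le\nu_1$, every chain satisfies $\nu^{(i)}_1\le\lambda_1$; by the previous paragraph the degree of $\sqwFBK_\lambda(\X_m|\A_m,\B_m)$ in each $x_i$ is therefore at most $\lambda_1$. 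Because $\sqwFBK_\lambda(\X_m|\A_m,\B_m)$ is symmetric in $\X_m$ (as proven in \cite{Borodin_Korotkikh_Inhom}), it expands in monomial symmetric polynomials $m_\mu$ with $\mu_1\le\lambda_1$ and $\ell(\mu)\le m$, i.e.\ with $\mu\in\mathrm{Box}(\lambda_1,m)$; this is \eqref{eq:expansion sqWBK}. The coefficients $\alpha_{\lambda,\mu}(\A_m,\B_m)$ are finite sums of finite products of the coefficients of the previous paragraph, hence analytic near the origin.

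Finally, the triangularity at $\A=\B=0$ follows by reduction to the classical situation. By \eqref{eq:sqW at q=0} one has $\sqwFBK_\lambda(\X_m|0,0)=P_\lambda(\X_m;q,0)$, the Macdonald polynomial at $t=0$; by Macdonald's triangularity \cite[VI]{Macdonald1995} this polynomial has the expansion $P_\lambda(\X_m;q,0)=m_\lambda+\sum_{\mu}c_{\lambda\mu}(q)\,m_\mu$, the sum running over partitions $\mu\ne\lambda$ with $|\mu|=|\lambda|$ lying strictly below $\lambda$ in the dominance order. Since any such $\mu$ satisfies $\mu\le\lambda$ in the reverse lexicographic order adopted in this paper, this is precisely the assertion that $\alpha_{\lambda,\mu}|_{\A=\B=0}=0$ whenever $\mu\not\le\lambda$ while $\alpha_{\lambda,\lambda}|_{\A=\B=0}=1$.

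The only genuinely delicate point is the first step: checking that the negative powers of $x$ produced by the symbols $(a_r/x;q)_{\nu_r-\rho_r}$ cancel exactly against the prefactor $x^{|\nu|-|\rho|}$, so that $\sqwFBK_{\nu/\rho}(x)$ is a polynomial with the sharp degree bound $\nu_1$, and that this bound then propagates down the interlacing chains to yield $\deg_{x_i}\sqwFBK_\lambda\le\lambda_1$. Everything else — the finiteness of the branching sum, the nonvanishing of the denominators near $\A=\B=0$, and the classical monomial expansion of the $q$-Whittaker polynomials — is routine bookkeeping.
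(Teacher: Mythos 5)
Your proposal is correct and follows essentially the same route as the paper's proof: read the degree bound $\deg_x \sqwFBK_{\nu/\rho}(x|\cdot) \le \nu_1$ and the analyticity of coefficients off the explicit formula \eqref{eq:skew sqWBK}, propagate it to every $x_i$ via the branching rule, and conclude triangularity at $\A=\B=0$ from the classical monomial expansion of the $q$-Whittaker polynomial $P_\lambda(\X_m;q,0)$. The only cosmetic difference is that the paper gets the degree bound for all $x_i$ by one application of branching plus symmetry, whereas you unfold the full interlacing chain $\nu^{(1)}\preceq\cdots\preceq\nu^{(m)}=\lambda$; both give the same conclusion.
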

\begin{proof}
    Consider the branching factor $\sqwFBK_{\lambda/\mu}(x|\A_m,\B_m)$ of \eqref{eq:skew sqWBK}. It is clear that
    \begin{equation}
        \deg_x\left( \sqwFBK_{\lambda/\mu}(x|\A_m,\B_m) \right) = |\lambda| - | \mu| + \sum_{r\ge 1} \mu_{r}-\lambda_{r+1} = \lambda_1
    \end{equation}
    and that the coefficients of $\sqwFBK_{\lambda/\mu}(x|\A_m,\B_m)$ are analytic functions of parameters $\A_m,\B_m$ as long as $|a_1|,\dots,|a_m|, |b_1|,\dots,|b_m|<1$. Then, the branching rule \eqref{eq:branching rule} implies that
    \begin{equation}
        \deg_{x_i} \left( \sqwFBK_{\lambda}( \X_m |\A_m,\B_m) \right) = \lambda_1,
        \qquad \text{for all } i=1,\dots,m,
    \end{equation}
    by symmetry and this proves the expansion \eqref{eq:expansion sqWBK}. Setting all parameters $\A_m,\B_m$ to zero, the spin $q$-Whittaker polynomial $\sqwFBK$ reduces to the $q$-Whittaker polynomial $P$, as in \Cref{eq:limit sqW to qW}, and we have (\cite[VI.4]{Macdonald1995})
    \begin{equation}
        P_\lambda(\X_m;q,0) = m_\lambda(\X_m) + \sum_{\mu < \lambda} c_{\lambda,\mu} \, m_\mu(\X_n),
    \end{equation}
    where $c_{\lambda,\mu}$ are polynomials in $q$. This completes the proof. 
\end{proof}

We are now in the position to prove the orthogonality of spin $q$-Whittaker polynomials.

\begin{proof}[Proof of \Cref{thm:orthogonality inhom sqW}]
    Fix $L \ge 0$ and $\lambda$ such that $\lambda_1 \le L$. By \Cref{lem:finite expansion inhom sqw}, we have the expansion 
    \begin{equation} \label{eq:expansion sqWFBK beta}
        \sqwFBK_\lambda(\Y_m|\B_m, \A_m) = \sum_{\nu \subset \mathrm{Box}(L,m)} \beta_{\lambda,\nu} (\A_{n-1},\B_{n-1}) \, \mathbb{F}_\nu(\Y_m | \B_m, \A_m ),
    \end{equation}
    where
    \begin{equation} \label{eq:beta}
            \beta_{\lambda,\nu} (\A_{n-1},\B_{n-1}) = \frac{1}{\mathsf{c}_n(\nu|\A_{n-1},\B_{n-1})}  \left\llangle \sqwF_\nu(\Z_n| \A_{n-1},\B_{n-1})   \, \middle| \, \sqwF_\lambda(\Z_n | \B_{n-1}, \A_{n-1} ) \right\rrangle_n.
    \end{equation}
    Coefficients $\beta_{\lambda,\nu} (\A_{n-1},\B_{n-1})$
    are analytic functions of parameters $\A_{n-1}, \B_{n-1}$ for $|a_1|,\dots,|a_{n-1}|$, $|b_1|,\dots , |b_{n-1}|<1$ which can be seen by a direct inspection of each term in the right hand side of \eqref{eq:beta}. 
    Using \Cref{lem:expansion monomial inhom sqw} in the expansion \eqref{eq:expansion sqWFBK beta}, we reduce the relation to
    \begin{equation} \label{eq:relation alpha beta}
        \begin{split}
            &\sum_{\mu \in \mathrm{Box}(L,m)} \alpha_{\lambda,\mu} (\A_{m},\B_{m}) m_\mu(\Y_m) 
            \\
            & \qquad = \sum_{\nu \in \mathrm{Box}(L,m)} \sum_{\mu \in \mathrm{Box}(L,m)} \, \beta_{\lambda,\nu} (\A_{n-1},\B_{n-1}) \alpha_{\nu,\mu} (\A_m,\B_m) \, m_\mu(\Y_m).
        \end{split}
    \end{equation}
    Define the finite dimensional square matrices 
    \begin{equation}
        \begin{aligned}
            \alpha (\A_{n-1},\B_{n-1}) = \left[ \alpha_{\lambda,\mu} (\A_{n-1},\B_{n-1}) \right]_{\lambda,\mu \in \mathrm{Box}(L,n)},
            \\
            \beta(\A_{n-1},\B_{n-1}) = \left[ \beta_{\lambda,\mu} (\A_{n-1},\B_{n-1}) \right]_{\lambda,\mu \in \mathrm{Box}(L,n)}.
        \end{aligned}
    \end{equation}
    Then, relation \eqref{eq:relation alpha beta} implies the matrix relation
    \begin{equation} \label{eq:matrix alpha beta relation}
        \alpha (\A_{n-1},\B_{n-1}) = \beta (\A_{n-1},\B_{n-1}) \alpha (\A_{n-1},\B_{n-1}).
    \end{equation}
    For $a_1=\cdots=a_{n-1}=b_1=\cdots=b_{n-1}=0$, the matrix $\alpha$ is upper triangular and $\alpha_{\lambda,\lambda}=1$ and as a result it is invertible. Since each coefficient of $\alpha(\A_{n-1},\B_{n-1})$ is analytic for $|a_i|,|b_j|<1$, there exists $\varepsilon>0$ such that
    \begin{equation}
        \det\left[ \alpha (\A_{n-1},\B_{n-1}) \right] > 0, \qquad \forall |a_1|,\dots,|a_{n-1}|,|b_1|,\dots,|b_{n-1}|\le \varepsilon.
    \end{equation}
    Then, $\alpha (\A_{n-1},\B_{n-1})$ remains invertible in the same region and relation \eqref{eq:matrix alpha beta relation} implies that
    \begin{equation}
        \beta (\A_{n-1},\B_{n-1}) = \mathrm{Id}, \qquad \forall |a_1|,\dots,|a_{n-1}|,|b_1|,\dots,|b_{n-1}|\le \varepsilon.
    \end{equation}
    Since $\beta (\A_{n-1},\B_{n-1})$ is analytic we have $\beta(\A_{n-1},\B_{n-1}) = \mathrm{Id}$ for all $|a_1|,\dots,|a_{n-1}|,|b_1|,\dots |b_{n-1}| < 1$, showing that in the same region of parameters the desired orthogonality relation \eqref{eq:orthogonality sqW} holds for all partitions $\lambda , \mu \in \mathbb{Y}_m$. Setting $m=n-1$, we see that to conclude the proof we are only left to check that the orthogonality relation \eqref{eq:orthogonality sqW} also holds when at least one of the two partitions $\lambda,\mu$ has length equal to $n$. Since the case when $\lambda_n \neq \mu_n$ is covered by \Cref{lem:prelim orthogonality}, we assume that $\lambda_n = \mu_n$. In this case, using the shift property \eqref{eq:shift property}, we have
    \begin{equation} \label{eq:scalar prod with shift property}
        \begin{split}
            &\left\llangle \sqwF_\lambda(\Z_n|\A_{n-1},\B_{n-1}) \middle| \sqwF_\mu (\Z_n| \B_{n-1} , \A_{n-1} ) \right\rrangle_n
            \\
            & = \left\llangle \left( z_1 \cdots z_n \right)^{\lambda_n} \sqwF_{\tilde{\lambda}}(\Z_n|\A_{n-1},\B_{n-1}) \middle| \left(z_1 \cdots z_n \right)^{\mu_n} \sqwF_{\tilde{\mu}} (\Z_n| \B_{n-1} , \A_{n-1} ) \right\rrangle_n
            \\
            & = \left\llangle  \sqwF_{\tilde{\lambda}}(\Z_n|\A_{n-1},\B_{n-1}) \middle| \sqwF_{\tilde{\mu}} (\Z_n| \B_{n-1} , \A_{n-1} ) \right\rrangle_n
            \\
            & = \mathbf{1}_{\tilde{\lambda}=\tilde{\mu}} \mathsf{c}_n( \tilde{\lambda} | \A_{n-1}, \B_{n-1} ) ,
        \end{split}
    \end{equation}
    where we denoted 
    \begin{equation}
        \tilde{\lambda}=(\lambda_1 - \lambda_n, \dots, \lambda_{n-1} -\lambda_n, 0),
        \qquad
        \tilde{\mu}=(\mu_1 - \mu_n, \dots, \mu_{n-1} -\mu_n, 0).
    \end{equation} 
    This concludes the proof since 
    \begin{equation}
        \mathsf{c}_n( \tilde{\lambda} | \A_{n-1}, \B_{n-1} ) = \mathsf{c}_n( \lambda | \A_{n-1}, \B_{n-1} ),
    \end{equation}
    as it can be verified from the expression \eqref{eq:c AB}.
\end{proof}

\begin{remark}
    Setting $a_i=b_i=\sqrt{q}$, assuming $q\in (0,1)$, makes the expression \eqref{eq:orthogonality sqW} symmetric and the polynomials $\sqwF_\lambda(\Z_n|(\sqrt{q},\dots,\sqrt{q}),(\sqrt{q},\dots,\sqrt{q}))$ become self-orthonormal. It is not clear if this simplification in the $L^2$ norms $\mathsf{c}_n(\lambda|(\sqrt{q},\dots,\sqrt{q}),(\sqrt{q},\dots,\sqrt{q}))$ has any deeper implication.
\end{remark}

\subsection{Proofs of \Cref{thm:sqW are a basis} and \Cref{thm:sqWBK are a basis}} \label{subs: proof sqW are a basis}

Let us now prove that the spin $q$-Whittaker polynomials are a basis of the space of $n$-variate symmetric polynomials, denoted by $\mathsf{Sym}_n$. First, we report the next lemma.

\begin{lemma} \label{lem:sqw generate}
    Fix $n\in \mathbb{N}$. The families of symmetric polynomials $\{ \sqwF_\lambda(\X_n|\A_{n-1},\B_{n-1}) \}_{\lambda \in \mathbb{Y}_n}$ and $\{ \sqwFBK_\lambda(\X_n|\A_{n},\B_{n}) \}_{\lambda \in \mathbb{Y}_n}$ generate the space $\mathsf{Sym}_n$.
\end{lemma}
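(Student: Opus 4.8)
The plan is to derive the spanning property from the orthogonality of \Cref{thm:orthogonality inhom sqW} together with the degree estimate contained in \Cref{lem:expansion monomial inhom sqw}, by a dimension count carried out degree by degree. A tempting shortcut — expanding each $\sqwF_\lambda$ (or $\sqwFBK_\lambda$) unitriangularly on the monomial basis and reading off invertibility of the transition matrix — is not available, since the expansion of \Cref{lem:expansion monomial inhom sqw} is triangular only at $\A=\B=0$; for generic inhomogeneities $\sqwFBK_{(2,0)}(\X_2|\A_2,\B_2)$ already acquires a nonzero $m_{(2,1)}$-component, with $(2,1)>(2,0)$ in reverse lexicographic order. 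So one must genuinely use the orthogonality.

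I would first treat the family $\sqwF$. For $L\ge 0$ let $\mathsf{Sym}_n^{\le L}\subset \mathsf{Sym}_n$ be the subspace spanned by $\{m_\mu:\mu\in\mathrm{Box}(L,n)\}$, so that $\dim\mathsf{Sym}_n^{\le L}=\#\mathrm{Box}(L,n)$ and $\mathsf{Sym}_n=\bigcup_{L\ge 0}\mathsf{Sym}_n^{\le L}$. By \Cref{lem:expansion monomial inhom sqw} (specialized at $a_n=0$), $\sqwF_\lambda(\X_n|\A_{n-1},\B_{n-1})\in\mathsf{Sym}_n^{\le\lambda_1}$, so the polynomials $\{\sqwF_\lambda(\X_n|\A_{n-1},\B_{n-1}):\lambda\in\mathrm{Box}(L,n)\}$ all lie in $\mathsf{Sym}_n^{\le L}$ and are $\#\mathrm{Box}(L,n)$ in number. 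They are linearly independent: pairing a relation $\sum_\lambda c_\lambda\sqwF_\lambda(\Z_n|\A_{n-1},\B_{n-1})=0$ against $\sqwF_\mu(\Z_n|\B_{n-1},\A_{n-1})$ in the scalar product $\llangle\cdot\,|\,\cdot\rrangle_n$ and invoking \Cref{thm:orthogonality inhom sqW} gives $c_\mu\,\mathsf{c}_n(\mu|\A_{n-1},\B_{n-1})=0$, and every $q$-Pochhammer factor in \eqref{eq:c AB} is nonzero when $|a_i|,|b_i|<1$ and $|q|<1$, so $\mathsf{c}_n(\mu|\A_{n-1},\B_{n-1})\neq 0$ and $c_\mu=0$. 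A linearly independent subset of cardinality $\dim\mathsf{Sym}_n^{\le L}$ is a basis of $\mathsf{Sym}_n^{\le L}$; letting $L\to\infty$ shows that $\{\sqwF_\lambda(\X_n|\A_{n-1},\B_{n-1})\}_{\lambda\in\mathbb{Y}_n}$ generates $\mathsf{Sym}_n$.

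For the family $\sqwFBK$ I would transfer the result through the restriction homomorphism $\rho\colon\mathsf{Sym}_{n+1}\to\mathsf{Sym}_n$ sending $f(\X_{n+1})$ to $f(x_1,\dots,x_n,0)$, which is surjective. By \Cref{prop:relation sqW and sqWBK} it maps $\sqwF_\lambda(\X_{n+1}|\A_n,\B_n)$ to $\sqwFBK_\lambda(\X_n|\A_n,\B_n)$ when $\ell(\lambda)\le n$, and to $0$ when $\ell(\lambda)=n+1$ (in the latter case $\sqwF_\lambda$ vanishes on every coordinate hyperplane, as in the inductive argument of the proof of \Cref{prop:relation sqW and sqWBK}). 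Applying the previous paragraph with $n+1$ variables, $\{\sqwF_\lambda(\X_{n+1}|\A_n,\B_n)\}_{\lambda\in\mathbb{Y}_{n+1}}$ generates $\mathsf{Sym}_{n+1}$, so its image under $\rho$ generates $\operatorname{im}(\rho)=\mathsf{Sym}_n$; that image is $\{\sqwFBK_\lambda(\X_n|\A_n,\B_n):\lambda\in\mathbb{Y}_n\}\cup\{0\}$, and removing $0$ does not change the span, so $\{\sqwFBK_\lambda(\X_n|\A_n,\B_n)\}_{\lambda\in\mathbb{Y}_n}$ generates $\mathsf{Sym}_n$ as well.

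The only substantive input is \Cref{thm:orthogonality inhom sqW}; modulo that (and the elementary degree bound of \Cref{lem:expansion monomial inhom sqw}), the argument is a routine comparison of dimensions and I expect no real obstacle. The single point requiring care is the last step for $\sqwFBK$, where one must know exactly which $\sqwF_\lambda$ survive the specialization $x_{n+1}=0$; this is precisely what \Cref{prop:relation sqW and sqWBK} (and its proof) supplies, and it is also the reason the $\sqwFBK$ case is handled by the restriction device rather than directly, since no orthogonality is available for the $\sqwFBK$ themselves.
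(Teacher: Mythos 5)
Your proof is correct, but it takes a genuinely different route from the paper's. The paper proves \Cref{lem:sqw generate} entirely through the Pieri rule \eqref{eq:pieri rule}: since $\bigl(\sum_k e_k(\X_n)(-u)^k\bigr)\sqwFBK_\mu$ is a finite $\shlF$-weighted linear combination of $\sqwFBK_\lambda$'s, one can inductively build $e_\nu(\X_n)=e_{\nu_1}\cdots e_{\ell(\nu)}$ inside the span of the $\sqwFBK_\lambda$, and since the $e_\nu$ generate $\mathsf{Sym}_n$ the lemma follows; the same argument with the Pieri rule for $\sqwF$ handles the other family. The key structural feature of the paper's proof is that it makes no appeal to \Cref{thm:orthogonality inhom sqW} at all — the Pieri-rule input is independent of orthogonality, so the lemma (generation alone) is logically cheaper than full basisness. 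Your proof instead pulls in the orthogonality theorem to get linear independence, then closes by a dimension count in $\mathrm{Box}(L,n)$; this is valid (there is no circularity, since \Cref{lem:sqw generate} is never used in the proof of \Cref{thm:orthogonality inhom sqW}), and it is in fact the same idea that the paper uses as the alternative proof of \Cref{thm:sqW are a basis}. What you add that the paper does not is the treatment of the $\sqwFBK$ family by pushing forward along the restriction homomorphism $\rho\colon\mathsf{Sym}_{n+1}\to\mathsf{Sym}_n$ via \Cref{prop:relation sqW and sqWBK} and the observation (from the shift property) that $\sqwF_\lambda$ with $\ell(\lambda)=n+1$ dies under $\rho$; this neatly sidesteps the need for a Pieri rule for $\sqwFBK$ but at the cost of having already proved orthogonality. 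In short: same conclusion, different mechanism — the paper trades on Pieri rules and needs no orthogonality; you trade on orthogonality and need no Pieri rules. One small cleanup: your bound $\sqwF_\lambda\in\mathsf{Sym}_n^{\le\lambda_1}$ follows directly from the degree estimate in the proof of \Cref{lem:expansion monomial inhom sqw} (each $\deg_{x_i}\le\lambda_1$) and does not actually require the shift property, so your argument is slightly leaner than the paper's alternative proof, which invokes \Cref{prop:shift property} at this point.
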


\begin{proof}
    Let us first prove that the family of spin $q$-Whittaker polynomials $\{ \sqwFBK_\lambda(\X_n|\A_{n},\B_{n}) \}_{\lambda \in \mathbb{Y}_n}$ generate the space of $n$-variate of symmetric polynomials. From the Pieri rule \eqref{eq:pieri rule}, we have
    \begin{equation}
        \left( \sum_{k=0}^n e_k(\X_n)(-u)^k \right)
        \sqwFBK_\mu(\X_n|\A_n,\B_n)
        =\sum_{\lambda} c_{\lambda,\mu} (u)  \sqwFBK_\lambda(\X_n|\A_n,\B_n),
    \end{equation}
    where $e_k(\X_n)$ is the $k$-th elementary symmetric polynomials in $n$ variables and 
    \begin{equation}
        c_{\lambda,\mu} (u) = E(\A_n;u) \shlF_{\lambda'/\mu'} (u|\A,\B).
    \end{equation}
    This implies that, for any $\nu \in \mathbb{Y}$, we can express the product of elementary symmetric polynomials
    \begin{equation}
        e_\nu(\X_n) = e_{\nu_1}(\X_n) \cdots e_{\ell(\nu)}(\X_n),
    \end{equation}
    as a linear combination of spin $q$-Whittaker polynomials. Since the set of symmetric polynomials $\{ e_\nu(\X_n) \}_{\nu \in \mathbb{Y}}$ generates $\mathsf{Sym}_n$, so do the $n$-variate spin $q$-Whittaker polynomials $\{ \sqwFBK_\lambda(\X_n|\A_{n},\B_{n}) \}_{\lambda \in \mathbb{Y}_n}$.

    The analogous statement for the family $\{ \sqwF_\lambda(\X_n|\A_{n-1},\B_{n-1}) \}_{\lambda \in \mathbb{Y}_n}$ can be proved in the same way using the corresponding Pieri rules.
\end{proof}

\begin{proof}[Proof of \Cref{thm:sqW are a basis}]
    This is a straightforward consequence of \Cref{thm:orthogonality inhom sqW} and of \Cref{lem:sqw generate}. An alternative proof is as follows\footnote{Suggested by Travis Scrimshaw.}. The spin $q$-Whittaker polynomials are linearly independent as a result of \Cref{thm:orthogonality inhom sqW}. Moreover, by \Cref{lem:expansion monomial inhom sqw} and the shift property of \Cref{prop:shift property} we have
    \begin{equation}
        \sqwF_\lambda(\X_n|\A_{n-1} , \B_{n-1}) \in \mathrm{Span} \{ m_\nu(\X_n) \}_{\nu \in \mathrm{Box}(L,n)},
        \qquad \text{for all } \lambda \in \mathrm{Box}(L,n).
    \end{equation}
    This implies that, for all $L>0$, we have
    \begin{equation}
        \mathrm{Span} \{ \sqwF_\lambda(\X_n|\A_{n-1} , \B_{n-1}) \}_{\lambda \in \mathrm{Box}(L,n)} = \mathrm{Span} \{ m_\nu(\X_n) \}_{\nu \in \mathrm{Box}(L,n)},
    \end{equation}
    which proves that $\{\sqwF_\lambda(\X_n|\A_{n-1}, \B_{n-1})\}_{\lambda \in \mathbb{Y}_n}$ generates $\mathsf{Sym}_n$.
\end{proof}

\begin{proof}[Proof of \Cref{thm:sqWBK are a basis}]
    We proceed by contradiction. For this assume that there exists a set $S \subset \mathbb{Y}_n$ and non-zero coefficients $\{\gamma_\lambda\}_{\lambda \in S}$ such that
    \begin{equation} \label{eq:linear combination 0}
        \sum_{\lambda \in S} \gamma_\lambda \sqwFBK_\lambda(\X_n |\A_n, \B_n) = 0.
    \end{equation}
    Replacing $\sqwFBK_\lambda(\X_n |\A_n, \B_n)$ with $\sqwF_\lambda(\X_{n+1} |\A_n, \B_n)$ in the left hand side of \eqref{eq:linear combination 0} we obtain the symmetric polynomial in $n+1$ variables
    \begin{equation} \label{eq:expansion f 1}
        f(\X_{n+1})=\sum_{\lambda \in S} \gamma_\lambda \sqwF_\lambda(\X_{n+1} |\A_n, \B_n).
    \end{equation}
    By \Cref{prop:relation sqW and sqWBK}, the polynomial $f(\X_{n+1})$ reduces to the left hand side of $\eqref{eq:linear combination 0}$ setting $x_i=0$ for any $i=1,\dots,n+1$. As a result the monomial $x_1 \cdots \, x_{n+1}$ divides the polynomial $f(\X_{n+1})$ and we can write
    \begin{equation}
        f(\X_{n+1}) = x_1 \cdots \, x_{n+1} \, \tilde{f}(\X_{n+1}),
    \end{equation}
    where $\tilde{f}$ is another symmetric polynomial in $n+1$ variables. Expanding $\tilde{f}$ in the basis of $\sqwF$ polynomials, we get
    \begin{equation}
        \tilde{f}(\X_{n+1}) = \sum_{\mu} \tilde{\gamma}_\mu \, \sqwF_\mu(\X_{n+1}|\A_n, \B_n),
    \end{equation}
    for some coefficients $\tilde{\gamma}_\mu$. This yields the expansion
    \begin{equation} \label{eq:expansion f 2}
        f(\X_{n+1}) = \sum_{\mu} \tilde{\gamma}_\mu \, \sqwF_{(\mu_1+1 , \dots, \mu_{n+1}+1)}(\X_{n+1}|\A_n, \B_n),
    \end{equation}
    using the shift property \eqref{eq:shift property}. Since polynomials $\sqwF$ form a basis, the two expansions \eqref{eq:expansion f 1} and \eqref{eq:expansion f 2} need to be equal, which is a contradiction since the set $S$ contains only partitions of length strictly less than $n+1$.
\end{proof}

\section{Special cases} \label{sec:special cases}

In this section we consider several special cases and scaling limits of the spin $q$-Whittaker polynomials and the corresponding specializations of \Cref{thm:orthogonality inhom sqW} and \Cref{thm:sqW are a basis}. To avoid exceedingly technical arguments, in \Cref{subs:spin Whittaker}, which is dedicated to the spin Whittaker functions, the discussion will be kept at a formal level.

\subsection{Inhomogeneous symmetric Grothendieck polynomials}

The first specialization of the spin $q$-Whittaker polynomials we consider is setting $q=0$ and $a_1=\cdots=a_{n-1}=0$, leading to the definitions of polynomials
\begin{equation}
    G_\lambda(\X_n|\B_{n-1}) = \sqwF_\lambda(\X_n| \A_{n-1}, \B_{n-1})\big|_{q=0, \,a_1=\cdots=a_{n-1}=0}
\end{equation}
and their variant 
\begin{equation}
    \overline{G}_\lambda(\X_n|\B_{n-1}) = \sqwF_\lambda(\X_n| \B_{n-1}, \A_{n-1})\big|_{q=0, \, a_1=\cdots=a_{n-1}=0}.
\end{equation}
The polynomials $G$ are the (inhomogeneous) symmetric Grothendieck polynomials \cite{Chan_Pflueger_Grothendieck,Hwang_et_al_Grothendieck_combinatorial}; see \cite{Fomin_Kirillov_Grothendieck} for the case $b_1=\cdots=b_n=\beta$ and also \cite{Lascoux_Schutzemberger_Grothendieck,Buch_Grothendieck} for the case $b_1=\cdots=b_n=1$. They are more commonly defined through the bialternant formulas
\begin{equation} \label{eq:G bialternant}
    G_\lambda(\X_n|\B_{n-1}) = \frac{\det\left[ x_i^{\lambda_j +n-j} \prod_{k=1}^{j-1}(1- x_i b_k) \right]_{i,j=1,\dots,n}}{\det \left[ x_i^{n-j} \right]_{i,j=1,\dots,n}}.
\end{equation}
Notice that, as a consequence of \Cref{prop:reverse symmetry}, from \eqref{eq:G bialternant} one gets
\begin{equation} \label{eq:G star bialternant}
    \overline{G}_\lambda(\X_n|\B_{n-1}) = \frac{\det\left[ x_i^{\lambda_j} \prod_{k=1}^{n-j}(x_i-b_{n-k}) \right]_{i,j=1,\dots,n}}{\det \left[ x_i^{n-j} \right]_{i,j=1,\dots,n}}.
\end{equation}
To verify that the polynomials $G$ defined specializing the spin $q$-Whittaker polynomials correspond to the Grothendieck polynomials, one can check that the branching rule \eqref{eq:branching rule} reduces, after setting $q=a_1=\cdots=a_n=0$, to the set-valued tableaux combinatorial formula given in \cite[Theorem 3.2]{Hwang_et_al_Grothendieck_combinatorial} or \cite[Theorem 3.4]{Chan_Pflueger_Grothendieck}, which generalizes previously known combinatorial formulas for the case $b_1=\cdots=b_n=1$ \cite{Buch_Grothendieck}; see also \cite[Theorem 4.2]{yeliussizov2015duality}. Alternatively one can observe that, again as a consequence of the branching rule \eqref{eq:branching rule}, we have
\begin{equation} \label{eq:monomial expansion Grothendieck}
    G_\lambda(\X_n|\B_{n-1}) = m_\lambda(\X_n) + \sum_{\mu>\lambda} c_{\lambda,\mu} m_\mu(\X_n),
    \qquad 
    \overline{G}_\lambda (\X_n|\B_{n-1}) = m_\lambda(\X_n) + \sum_{\mu<\lambda} \overline{c}_{\lambda,\mu} m_\mu(\X_n),
\end{equation}
for some coefficients $\{c_{\lambda,\mu}\},\{\overline{c}_{\lambda,\mu}\}$ and specializing the Cauchy identities \eqref{eq:inhom CI sqWBK sqWBK}, we have
\begin{equation} \label{eq:CI Grothendieck}
    \sum_{\lambda} G_\lambda(\X_n|\B_{n-1}) \overline{G}_\lambda (\Y_n|\B_{n-1}) = \frac{\prod_{i=1}^n \prod_{j=1}^{n-1}(1-b_j x_i)}{\prod_{i=1}^n \prod_{j=1}^n(1-x_iy_j)}.
\end{equation}
Properties \eqref{eq:monomial expansion Grothendieck}, \eqref{eq:CI Grothendieck}, which are also satisfied by Grothendieck polynomials (see e.g. \cite[eq. (3.7)]{Gavrilova_Petrov_Tilted}), uniquely identify the polynomials $G,\overline{G}$ and therefore \eqref{eq:G bialternant}, \eqref{eq:G star bialternant} hold.

\begin{remark}
    In \cite{Motegi_Sakai_2013} they also define the functions $\overline{G}_\lambda(\Y_n|\B_{n-1})$, which differ by ours, in the case $b_1=\cdots=b_{n-1}=-\beta$ by a multiplication of a factor $\prod_{i=1}^n (1-\beta/y_i)^{n-1}$.
\end{remark}

\medskip

To specialize the orthogonality result of \Cref{thm:orthogonality inhom sqW} to the inhomogeneous symmetric Grothendieck polynomials, we define the scalar product 
\begin{equation}
    \left\langle f|g \right\rangle_{n,\B_{n-1}} = \frac{1}{n!} \int_{\mathbb{T}^n} f(\Z_n) g(\Zbar_n)    \frac{\prod_{1 \le i \neq j \le n} (1-z_i/z_j)}{ \prod_{i=1}^n \prod_{j=1}^{n-1} (1-z_i b_j) } \prod_{i=1}^n \frac{ \diff z_i }{2 \pi \mathrm{i} z_i}.
\end{equation}

\begin{theorem} \label{thm:orthogonality Grothendieck polys}
    We have
    \begin{equation} \label{eq:orthogonality Grothendieck polys}
        \left\langle G_\lambda(\Z_n|\B_{n-1}) |\overline{G}_\lambda(\Z_n|\B_{n-1}) \right\rangle_{n,\B_{n-1}} = \mathbf{1}_{\lambda=\mu}.
    \end{equation}
\end{theorem}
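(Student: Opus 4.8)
The plan is to obtain \Cref{thm:orthogonality Grothendieck polys} as a direct specialization of \Cref{thm:orthogonality inhom sqW}, tracking carefully how the scalar product $\llangle \cdot\,|\,\cdot\rrangle_n$ degenerates when we set $q=0$ and $a_1=\cdots=a_{n-1}=0$. First I would record that, by definition, $G_\lambda(\Z_n|\B_{n-1}) = \sqwF_\lambda(\Z_n|\A_{n-1},\B_{n-1})\big|_{q=0,\,\A_{n-1}=0}$ and $\overline{G}_\mu(\Z_n|\B_{n-1}) = \sqwF_\mu(\Z_n|\B_{n-1},\A_{n-1})\big|_{q=0,\,\A_{n-1}=0}$; note the roles of $\A,\B$ are swapped in the second slot, exactly as in the left-hand side of \eqref{eq:orthogonality sqW}, so the pairing $\llangle \sqwF_\lambda(\Z_n|\A_{n-1},\B_{n-1})\,|\,\sqwF_\mu(\Z_n|\B_{n-1},\A_{n-1})\rrangle_n$ is the correct object to specialize.

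Next I would compute the limit of the density $\Delta(\Z_n|\A_{n-1},\B_{n-1})$ as $q\to 0$ and $a_k\to 0$. In that regime one has $(z;q)_\infty \to 1-z$, so $\Delta(\Z_n)$ from \eqref{eq:Delta} becomes $\frac{1}{n!}\prod_{i\neq j}(1-z_i/z_j)\prod_i \frac{1}{2\pi\mathrm{i} z_i}$. From \eqref{eq:H}, $H_q(\Zbar_n;\A_{n-1}) \to 1$ since every $a_k=0$; $H_q(\A_{n-1};\B_{n-1}) \to 1$ for the same reason; $\prod_{k}(a_kb_k;q)_\infty \to 1$; and $H_q(\Z_n;\B_{n-1}) \to \prod_{i=1}^n\prod_{j=1}^{n-1}\frac{1}{1-z_i b_j}$. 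Substituting into \eqref{eq: delta A B} gives precisely the density appearing in $\langle\cdot|\cdot\rangle_{n,\B_{n-1}}$, namely $\frac{1}{n!}\frac{\prod_{i\neq j}(1-z_i/z_j)}{\prod_{i,j}(1-z_i b_j)}\prod_i\frac{\diff z_i}{2\pi\mathrm{i} z_i}$. Hence $\llangle\cdot\,|\,\cdot\rrangle_n$ specializes to $\langle\cdot|\cdot\rangle_{n,\B_{n-1}}$ in the limit. On the constant side, $\mathsf{c}_n(\lambda|\A_{n-1},\B_{n-1})$ from \eqref{eq:c AB} has each factor $\frac{(a_kb_k;q)_\infty}{(q;q)_\infty}\frac{(q;q)_{\lambda_k-\lambda_{k+1}}}{(a_kb_k;q)_{\lambda_k-\lambda_{k+1}}}$; as $q,a_k\to 0$ this becomes $\frac{1}{1}\cdot\frac{1}{1}=1$ when $\lambda_k=\lambda_{k+1}$ and $\frac{1}{1}\cdot\frac{1-0}{1}=1$ when $\lambda_k>\lambda_{k+1}$ (since $(q;q)_m\to 1$ and $(0;q)_m=1$ for all fixed $m$ as $q\to 0$), so $\mathsf{c}_n(\lambda|\cdots)\to 1$ for every $\lambda$. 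Therefore the right-hand side of \eqref{eq:orthogonality sqW} becomes $\mathbf{1}_{\lambda=\mu}$, matching \eqref{eq:orthogonality Grothendieck polys}.

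The one genuine subtlety — and the main obstacle — is justifying that the torus integral converges and that one may pass to the $q\to0$, $\A_{n-1}\to 0$ limit inside the integral. Here the integrand is a symmetric Laurent polynomial in $\Z_n$ (the $G$ and $\overline G$ are polynomials, $g(\Zbar_n)$ a Laurent polynomial) times the rational density $\frac{\prod_{i\neq j}(1-z_i/z_j)}{\prod_{i,j}(1-z_i b_j)}$; since $|b_j|<1$ the poles $z_i=b_j^{-1}$ lie strictly outside $\mathbb{T}$, so the density is continuous on $\mathbb{T}^n$ and the integral is a genuine (finite) integral rather than a formal contour residue sum. The factor $\prod_{i\neq j}(1-z_i/z_j)$ vanishes on the diagonals, so there is no issue there either. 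Because everything in sight depends continuously (indeed analytically, for $|q|,|a_k|$ small and $|b_j|<1$ fixed) on $q$ and the $a_k$, and the domain $\mathbb{T}^n$ is compact, dominated convergence applies and the limit may be taken under the integral sign. I would also remark, for completeness, that \eqref{eq:monomial expansion Grothendieck} already shows $G_\lambda,\overline G_\mu$ are linearly independent and triangular against the monomial basis, so \eqref{eq:orthogonality Grothendieck polys} additionally recovers that $\{G_\lambda(\X_n|\B_{n-1})\}_{\lambda\in\mathbb{Y}_n}$ is a basis of $\mathsf{Sym}_n$, the $q=\A_{n-1}=0$ specialization of \Cref{thm:sqW are a basis}.
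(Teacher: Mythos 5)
Your proof is correct and follows the same approach the paper intends: Theorem 5.1 is obtained by specializing Theorem~\ref{thm:orthogonality inhom sqW} at $q=0$, $a_1=\cdots=a_{n-1}=0$, and you verify correctly that the density $\Delta(\Z_n|\A_{n-1},\B_{n-1})$ becomes the density of $\langle\cdot|\cdot\rangle_{n,\B_{n-1}}$ and that $\mathsf{c}_n(\lambda|\A_{n-1},\B_{n-1})\to 1$. The only (harmless) inefficiency is your dominated-convergence discussion: since $q=0$ and $a_k=0$ lie within the range $|q|<1$, $|a_k|<1$ for which Theorem~\ref{thm:orthogonality inhom sqW} is already proved, one may plug these values in directly rather than taking a limit, so no interchange of limit and integral needs to be justified.
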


We have not been able to locate the result of \Cref{thm:orthogonality Grothendieck polys} in literature. In the Schur polynomials case, when $b_1=\cdots = b_{n-1}=0$, the orthogonality \eqref{eq:orthogonality Grothendieck polys} corresponds to the orthogonality of characters of irreducible polynomial representations of the unitary group $U(n)$. It would be interesting to find if a similar representation theoretic interpretation can be given to the orthogonality result of \Cref{thm:orthogonality Grothendieck polys}.

\begin{remark}
    The closest result to \Cref{thm:orthogonality Grothendieck polys} we were able to find in literature is in \cite{Motegi_Sakai_2013}, where the polynomials $G_\lambda(\X_n|\B_{n-1})$ were defined for the case $b_1=\cdots=b_{n-1}=-\beta$. There, in \cite[Theorem 5.6]{Motegi_Sakai_2013}, authors  give the orthogonality result for the pair $G_\lambda, \overline{G}_\mu$ 
    \begin{equation}
        \sum_{\{ \Z_n \} } w_M(\Z_n) G_\lambda(\Z_n|(-\beta,\dots,-\beta)) \overline{G}_\lambda(\Zbar_n|(-\beta,\dots,-\beta))  = \mathbf{1}_{\lambda=\mu},
    \end{equation}
    where the sum runs over the set of roots of a certain family of polynomial equations (the Bethe ansatz equations) and partitions $\lambda,\mu$ are bounded by quantities depending on $n,M$. Above the weight $w_M$ depends on the positive integer $M$ and letting $M\to +\infty$ it is natural to conjecture that one should recover \eqref{eq:orthogonality Grothendieck polys}, for the particular case $b_1=\cdots=b_{n-1}=-\beta$.
\end{remark}

\subsection{Interpolation $q$-Whittaker polynomials}

Following \cite{Korotkikh2024}, we define the \emph{interpolation $q$-Whittaker polynomials}
\begin{equation}
    P_\lambda(\X_n|\B_{n-1}) = \sqwF_\lambda(\X_n| \A_{n-1}, \B_{n-1})\big|_{a_1=\cdots=a_{n-1}=0}
\end{equation}
and their dual variant
\begin{equation}
    \overline{P}_\lambda(\X_n|\B_{n-1}) = \sqwF_\lambda(\X_n| \B_{n-1} , \A_{n-1} )\big|_{a_1=\cdots=a_{n-1}=0}.
\end{equation}
These are at the same time $q$-deformation of the Grothendieck polynomials discussed in the previous subsection and multi parameter inhomogenous generalizations of the $q$-Whittaker polynomials. In \cite{Korotkikh2024}, it was shown that the polynomials $P_\lambda(\X_n|\B_{n})$ (which one defines setting $a_1=\cdots=a_n=0$ in the polynomials $\sqwFBK(\X_n|\A_n,\B_n)$) are unique solutions of certain vanishing conditions and as a result they are a linear basis of $\mathsf{Sym}_n$.

\begin{example}
    For the case $n=2$ and $\lambda=(2,0)$ we have
    \begin{equation}
        \begin{split}
            P_{(2,0)}(x_1,x_2|b_1) &= \frac{\left(1-q^2\right) x_2 x_1 \left(1-b_1 x_1\right)}{1-q}+x_2^2 \left(1-b_1 x_1\right) \left(1-b_1 q x_1\right)+x_1^2
            \\
            & = P_{(2,0)}(x_1,x_2;q,0) - b_1 (1+q) P_{(2,0)}(x_1,x_2;q,0) + b_1^2 q  P_{(2,2)}(x_1,x_2;q,0),
        \end{split}
    \end{equation}
    where again $P_{\lambda}(\X_n;q,0)$ denote the $q$-Whittaker polynomials.
\end{example}

To specialize \Cref{thm:orthogonality inhom sqW} to the interpolation $q$-Whittaker polynomials, we define the scalar product 
\begin{equation}
    \left\llangle f|g \right\rrangle_{n,\B_{n-1}} = \frac{1}{n!} \int_{\mathbb{T}^n} f(\Z_n) g(\Zbar_n)    \frac{\prod_{1 \le i \neq j \le n} (z_i/z_j;q)_\infty}{ \prod_{i=1}^n \prod_{j=1}^{n-1} (z_i b_j;q)_\infty } \prod_{i=1}^n \frac{ \diff z_i }{2 \pi \mathrm{i} z_i}.
\end{equation}
\begin{theorem}
    The families of symmetric polynomials $\{P_\lambda(\X_n|\B_{n-1})\}_{\lambda \in \mathbb{Y}_n}$ and $\{\overline{P}_\lambda(\X_n|\B_{n-1})\}_{\lambda \in \mathbb{Y}_n}$ are linear basis of $\mathsf{Sym}_n$. Moreover, we have 
    \begin{equation}
        \left\llangle P_\lambda(\Z_n|\B_{n-1}) \middle| \overline{P}_\lambda(\Z_n|\B_{n-1}) \right\rrangle_{n,\B_{n-1}}        = \mathbf{1}_{\lambda=\mu} \prod_{k=1}^{n-1} \frac{(q;q)_{\lambda_k-\lambda_{k+1}}}{(q;q)_\infty}.
    \end{equation}
\end{theorem}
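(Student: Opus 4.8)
The plan is to obtain both assertions by specializing \Cref{thm:orthogonality inhom sqW} and \Cref{thm:sqW are a basis} at $a_1=\cdots=a_{n-1}=0$. The essential point is that this is not a degenerate limit: the value $a_i=0$ satisfies the standing assumption $|a_i|<1$ of \eqref{eq:conditions parameters}, so those theorems apply \emph{verbatim} with that choice of parameters, and no analytic-continuation argument is required. Consequently the whole proof reduces to matching notation, using repeatedly the trivializations $(0;q)_\infty=(0;q)_m=1$ for $m\ge 0$, together with $H_q(\,\cdot\,;\mathbf{0})=H_q(\mathbf{0};\,\cdot\,)=1$, which follow from \eqref{eq:H}.

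First I would identify the scalar products. Setting $a_k=0$ in the density \eqref{eq: delta A B} and using $H_q(\Zbar_n;\A_{n-1})\big|_{a_k=0}=1$, $\prod_{k=1}^{n-1}(a_kb_k;q)_\infty\big|_{a_k=0}=1$, and $H_q(\A_{n-1};\B_{n-1})\big|_{a_k=0}=1$, one gets $\Delta(\Z_n|\A_{n-1},\B_{n-1})\big|_{a_k=0}=H_q(\Z_n;\B_{n-1})\,\Delta(\Z_n)$, which by \eqref{eq:H} and \eqref{eq:Delta} is exactly the density appearing in $\llangle\,\cdot\,|\,\cdot\,\rrangle_{n,\B_{n-1}}$; hence $\llangle f\,|\,g\rrangle_{n,\B_{n-1}}=\llangle f\,|\,g\rrangle_n\big|_{a_k=0}$. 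Since by definition $P_\lambda(\X_n|\B_{n-1})=\sqwF_\lambda(\X_n|\A_{n-1},\B_{n-1})\big|_{a_k=0}$ and $\overline{P}_\mu(\X_n|\B_{n-1})=\sqwF_\mu(\X_n|\B_{n-1},\A_{n-1})\big|_{a_k=0}$, specializing \eqref{eq:orthogonality sqW} gives
\begin{equation}
    \llangle P_\lambda(\Z_n|\B_{n-1}) \,|\, \overline{P}_\mu(\Z_n|\B_{n-1}) \rrangle_{n,\B_{n-1}} = \mathbf{1}_{\lambda=\mu}\,\mathsf{c}_n(\lambda|\A_{n-1},\B_{n-1})\big|_{a_k=0},
\end{equation}
and from \eqref{eq:c AB} one has $\mathsf{c}_n(\lambda|\A_{n-1},\B_{n-1})\big|_{a_k=0}=\prod_{k=1}^{n-1}\frac{(q;q)_{\lambda_k-\lambda_{k+1}}}{(q;q)_\infty}$, which is the claimed constant.

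For the basis statement I would note that $\{P_\lambda(\X_n|\B_{n-1})\}_{\lambda\in\mathbb{Y}_n}$ is the family $\{\sqwF_\lambda(\X_n|\A_{n-1},\B_{n-1})\}_{\lambda\in\mathbb{Y}_n}$ at the admissible parameter point $\A_{n-1}=(0,\dots,0)$, and $\{\overline{P}_\lambda(\X_n|\B_{n-1})\}_{\lambda\in\mathbb{Y}_n}$ is the same family with the two inhomogeneity lists taken as $(\B_{n-1},(0,\dots,0))$; in both cases \Cref{thm:sqW are a basis} applies directly and yields that these are linear bases of $\mathsf{Sym}_n$. (Alternatively, linear independence of each family is immediate from the biorthogonality just established, and the spanning property follows from the monomial triangularity of \Cref{lem:expansion monomial inhom sqw} together with \Cref{prop:shift property}, exactly as in the alternative proof of \Cref{thm:sqW are a basis}.) There is no genuine obstacle here; the only point deserving care is the one already emphasized, namely that $a_i=0$ is an interior point of the admissible region so the specialization is literal rather than a limit, after which every step is routine bookkeeping with $q$-Pochhammer symbols.
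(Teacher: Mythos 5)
Your proof is correct and takes the same route the paper implicitly follows (the paper states this theorem without an explicit proof, treating it as a direct specialization of \Cref{thm:orthogonality inhom sqW} and \Cref{thm:sqW are a basis} at $a_1=\cdots=a_{n-1}=0$). You correctly flag that $a_i=0$ lies in the interior of the admissible parameter region \eqref{eq:conditions parameters}, so the specialization is literal, and your bookkeeping matching the density, the polynomials, and the constant $\mathsf{c}_n$ is accurate.
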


\subsection{Formal reduction to inhomogeneous spin Whittaker functions} \label{subs:spin Whittaker}
In this section we define the \emph{inhomogeneous spin Whittaker} functions, which are inhomogeneous variants of the functions defined in \cite{MucciconiPetrov2020}. They are multi parameter generalizations of the $GL(n)$-Whittaker functions \cite{Kostant_Whittaker,givental1996stationary} in the same way the spin $q$-Whittaker polynomials generalize the $q$-Whittaker polynomials. As briefly discussed below, the scaling limit linking the spin $q$-Whittaker polynomials to the spin Whittaker functions, unlike in the previous subsections, is singular as it involves in multiple places the convergence of the $q$-Pochhammer symbols to the Euler Gamma function. For this reason, to avoid potentially lengthy proofs, we will keep the discussion formal and one can treat the statements presented as conjectures.

\medskip

We define the Weyl chamber of $\mathbb{R}_{\ge 1}^n$ as
\begin{equation}
    \mathcal{W}_n = \left\{ \L_n \in \mathbb{R}_{\ge 1}^n: L_n \le \cdots \le L_1 \right\}.
\end{equation}
Between elements of the Weyl chambers $\L_{n-1}'\in \mathcal{W}_{n-1}$ and $\L_n \in \mathcal{W}_{n}$, we define the interlacing relations
\begin{equation}
    \L'_{n-1} \preceq \L_n, \qquad \text{if} \qquad L_{i+1} \le L'_i \le L_i, \qquad \text{for all } i=1,\dots,n-1.
\end{equation}
Define the inhomogeneous skew spin Whittaker function
\begin{equation}
    \mathfrak{f}_X( \L_{n-1}' ; \L_n  |\AA_{n-1},\BB_{n-1}) =  \mathbf{1}_{\L_{n-1}' \preceq \L_n} \left( \frac{L_1\cdots L_n}{L_1'\cdots L_{n-1}' } \right)^{-X} \prod_{i=1}^{n-1} \mathcal{A}_{A_i,B_i;X}(L_i,L_i',L_{i+1}),
\end{equation}
where 
\begin{equation}
    \mathcal{A}_{A,B;X}(u,v,z) = \frac{1}{\mathrm{Beta}(A-X,B+X)} \left( 1- \frac{v}{z} \right)^{A-X-1} \left( 1- \frac{u}{v} \right)^{B+X-1} \left( 1- \frac{u}{z} \right)^{1-A-B},
\end{equation}
for all $1\le u<v<z$ and $A>X>-B$. Above Beta denotes Euler's Beta function. Through a branching rule we define the inhomogeneous spin Whittaker function as
\begin{equation}
    \mathfrak{f}_{\XX_n}( \L_n  |\AA_{n-1},\BB_{n-1}) = \int_{\L_{n-1}' \preceq \L_n} \mathfrak{f}_{\XX_{n-1}} (\L_{n-1}'|\tau \AA_{n-1},\BB_{n-2}) \mathfrak{f}_{X_n}( \L_{n-1}' ; \L_n  |\AA_{n-1},\BB_{n-1}) \prod_{i=1}^n \frac{\diff L_i}{L_i},
\end{equation}
where variables $\XX_n$ and parameters $\AA_{n-1}, \BB_{n-1}$ are complex numbers satisfying the conditions
\begin{equation}
    \Re\{ A_i - X_j \}, \Re\{ B_i + X_j \} > 0, \qquad \text{for all } i=1,\dots,n-1 \text{ and } j=1,\dots,n.
\end{equation}

The spin Whittaker functions should be recovered as a limit of the spin $q$-Whittaker polynomials as
\begin{equation} \label{eq:limit sqW to SW}
    \lim_{q\to 1}  \left( \log q^{-1} \right)^{-\binom{n}{2} } \sqwF_\lambda(\X_n|\A_{n-1}, \B_{n-1}) = \mathfrak{f}_{\XX_n}(\L_n|\AA_{n-1},\BB_{n-1}),
\end{equation}
under the scaling
\begin{equation} \label{eq:scaling sqW to SW}
    x_i=q^{X_i}, \qquad a_i = q^{A_i}, \qquad b_i = q^{B_i}, \qquad \lambda_i = \lfloor \log_q (1/L_i) \rfloor.
\end{equation}
In \cite{MucciconiPetrov2020} the functions $\mathfrak{f}_{\XX_n}( \L_n  |\AA_{n-1},\BB_{n-1})$ were defined in the particular (homogeneous) case $A_1=\dots=A_{n-1}=B_1=\cdots=B_{n-1}=S>0$ and the limit \eqref{eq:limit sqW to SW} was shown to hold locally uniformly for $\L_n \in \mathcal{W}_n$. These functions were shown to satisfy integral identities which are continuous analogs of the Cauchy identities \eqref{eq:inhom CI sqW sqW}. Furthermore it was shown that they are eigenfunction of the second order differential operator
\begin{equation}
    -\frac{1}{2} \sum_{i=1}^n \partial_{u_i}^2 + \sum_{1 \le i < j \le n} S^{-2(j-i)} e^{u_j-u_i} (S-\partial_{u_i}) (S+\partial_{u_j}),
\end{equation}
where $L_j=S^{N+1-2j}e^{u_j}$ and with eigenvalue
\begin{equation}
    -\frac{1}{2}\left( X_1^2 + \cdots + X_n^2 \right).
\end{equation} 
Such eigenrelations arises as a scaling limit of the Pieri rules \eqref{eq:pieri rule}, following the recipe of \cite{GerasimovLebedevOblezin2011}, and as a result their inhomogeneous extensiond should exist, although we do not compute them here.

\medskip

We conclude addressing the orthogonality of the functions $\mathfrak{f}_{\XX_n}$, which was conjectured in a special case in \cite{MucciconiPetrov2020}. Define the Sklyanin-type density 
\begin{equation}
    \mathfrak{M}(\ZZ_n|\AA_{n-1},\BB_{n-1}) = \frac{\prod_{i=1}^n \prod_{j=1}^{n-1} \Gamma(A_j - Z_i) \Gamma(B_j+Z_i) }{ \prod_{i=1}^{n-1} \Gamma(A_i+B_i) \prod_{i=1}^{n-1}\prod_{j=1}^{n-1} \Gamma(A_i +B_j) } \mathfrak{M}(\ZZ_n),
\end{equation}
where
\begin{equation}
    \mathfrak{M}(\ZZ_n) = \frac{1}{n! (2 \pi \mathrm{i})^n} \prod_{1\le i \neq j \le n} \frac{1}{\Gamma(Z_i -Z_j )}.
\end{equation}
A formal limit of \Cref{thm:orthogonality inhom sqW} should show that, for all $\L_n, \L_n' \in \WeylCh_n$, we have
\begin{equation} \label{eq:orthogonality SW}
    \begin{split}
        &\int_{(\mathrm{i}\mathbb{R})^n} \mathfrak{f}_{\ZZ_n}(\L_n|\AA_{n-1},\BB_{n-1}) \mathfrak{f}_{-\ZZ_n}(\L_n'|\BB_{n-1}, \AA_{n-1}) \mathfrak{M}(\ZZ_n|\AA_{n-1},\BB_{n-1}) \diff Z_1 \cdots \diff Z_n 
        \\
        & \qquad \qquad \qquad \qquad \qquad \qquad \qquad \qquad \qquad \qquad \qquad = \prod_{i=1}^{n-1} \left( 1 - \frac{L_{i+1}}{L_{i}} \right)^{1-A_i-B_i} \delta_{\L_n - \L_n'}.
    \end{split}
\end{equation}
A rigorous proof of \eqref{eq:orthogonality SW} addressing the (weak) convergence of left and right hand side of \eqref{eq:orthogonality sqW} under the scaling \eqref{eq:scaling sqW to SW} would require the use of convergence and decay estimates of $q$-Pochhammer symbols as those of \cite[Appendix C]{MucciconiPetrov2020}, \cite[Appendix B]{IMS_KPZ_free_fermions} and \cite[Section 4]{BorodinCorwin2011Macdonald}.

\section{A concluding remark} \label{sec:conclusion}

The orthogonality relation we prove in \Cref{thm:orthogonality inhom sqW} was initially conjectured in \cite{MucciconiPetrov2020} from integrable probabilistic argument. The spin $q$-Whittaker polynomials $\sqwF$, as proven in \cite{MucciconiPetrov2020}, can be used to describe the joint probability distribution of a stochastic particle system called $q$-Hahn TASEP introduced in \cite{Povolotsky2013,Corwin2014qmunu}. The probability distribution of a tagged particle in the $q$-Hahn TASEP, under the assumption of step initial conditions, was written in \cite{imamura2019stationary} as an $n$-fold contour integral. In the notation of this paper, this contour integral is\footnote{To match this expression with the one of \cite[Section 10.1]{MucciconiPetrov2020}, one should set $x_i=s,a_i=b_i=-s,y_i=y$.}
\begin{equation} \label{eq:probability tagged particle}
    \begin{split}
        &\mathbb{P}\left( \text{$n$-th particle at time $t$ is at position $\ell$}  \right)
        \\
        & = \prod_{i=1}^{n-1} \frac{(q;q)_\infty}{(a_i b_i;q)_\infty} 
        \left\llangle \frac{ H_q( \Z_n ; \Y_t ) H_q( \X_n ; \B_t ) }{ H_q( \Z_n ; \B_t ) H_q( \X_n ; \Y_t ) } \middle| \left( XZ \right)^\ell (XZ;q)_\infty \frac{ H_q(\X_n;\Z_n) H_q(\A_{n-1};\B_{n-1}) }{H_q(\X_n;\B_{n-1}) H_q(\A_{n-1};\Z_{n})} \right\rrangle_n,
    \end{split}
\end{equation}
where $XZ=x_1 z_1 \cdots x_n z_n$.
Assuming the orthogonality \eqref{eq:orthogonality sqW} and the shift property \eqref{eq:shift property}, the above formula could also (and more naturally) be derived from the results of \cite{MucciconiPetrov2020}, since left and right hand side of \eqref{eq:probability tagged particle} are proven to be equal to
\begin{equation}
    \frac{ H_q(\X_n;\B_t) H_q(\A_{n-1} ; \Y_t) }{ H_q(\X_n;\Y_t)  H_q(\A_{n-1} ; \B_t) } \sum_{ \substack{ \lambda \in \mathbb{Y}_n \\ \lambda_n=\ell } } \sqwF_\lambda(\X_n| \A_{n-1}, \B_{n-1}) \sqwFBK_\lambda^*(\Y_t| \B_t ,\A_t).
\end{equation}
This derivation is explained in \cite[Section 10.1]{MucciconiPetrov2020} and, in light of \Cref{thm:orthogonality inhom sqW}, is now rigorous. The $n$-fold integral formula \eqref{eq:probability tagged particle} can be turned into an $n$-fold integral representation for the $q$-Laplace trasform of the probability mass function as 
\begin{equation} \label{eq:laplace transform}
    \begin{split}
        &\mathbb{E} \left[ \frac{1}{( \zeta q^{ \text{position of the $n$-th particle at time $t$}} ; q )_\infty} \right]
        \\
        &
        \sum_{\ell \in \mathbb{Z}} \frac{1}{( \zeta q^{ \ell } ; q )_\infty} \mathbb{P}\left( \text{$n$-th particle at time $t$ is at position $\ell$}  \right)
        \\
        & = \prod_{i=1}^{n-1} \frac{(q;q)_\infty}{(a_i b_i;q)_\infty} 
        \left\llangle \frac{ H_q( \Z_n ; \Y_t ) H_q( \X_n ; \B_t ) }{ H_q( \Z_n ; \B_t ) H_q( \X_n ; \Y_t ) } \middle| \frac{(\zeta XZ,q/\zeta XZ,q;a)_\infty}{(\zeta,q/\zeta;q)_\infty} \frac{ H_q(\X_n;\Z_n) H_q(\A_{n-1};\B_{n-1}) }{H_q(\X_n;\B_{n-1}) H_q(\A_{n-1};\Z_{n})} \right\rrangle_n.
    \end{split}
\end{equation}
For a derivation, see \cite[Section 4]{imamura2017fluctuations}. Expressions of the kind \eqref{eq:laplace transform} have appeared in literature (in simpler forms) in \cite{BorodinCorwinRemenik,thieryLD2015integrable,imamura2019stationary,IMS_matching}, and they are useful as they can be turned, through algebraic manipulations, into determinantal formulas that are amenable to asymptotic analysis. Furthermore they provide connections between particle systems and explicit determinantal point processes \cite{IMS_matching,CafassoClaeysBiorthogonal}. This paper provides a further hint that such formulas are also related to orthogonality and completeness properties of special families of symmetric polynomials which describe the joint law of the stochastic system. 
Interestingly not all formulas of the kind \eqref{eq:laplace transform} have been linked yet to determinantal point processes or to families of orthogonal symmetric functions. In \cite{thieryLD2015integrable}, finite dimensional representation for Laplace trasform of a directed polymer in a random environment with Inverse Beta weights, have been conjectured. Despite the appearence of the Beta function in the model described in \cite{thieryLD2015integrable} and in the spin Whittaker functions, the latter appear to not be the naturally linked to the Inverse Beta Polymer, showing that the study of integral formulas of the kind \eqref{eq:laplace transform} might still bring insight to the theory of symmetric functions.

\bibliographystyle{alpha}
\bibliography{bib}

\newcommand{\etalchar}[1]{$^{#1}$}
\begin{thebibliography}{GdGW17}

\bibitem[BC14]{BorodinCorwin2011Macdonald}
A.~Borodin and I.~Corwin.
\newblock Macdonald processes.
\newblock {\em Probability Theory and Related Fields}, 158:225--400, 2014.

\bibitem[BCPS15]{BCPS2014}
A.~Borodin, I.~Corwin, L.~Petrov, and T.~Sasamoto.
\newblock {Spectral theory for interacting particle systems solvable by coordinate Bethe ansatz}.
\newblock {\em Communications in Mathematical Physics}, 339(3):1167--1245, 2015.

\bibitem[BCR13]{BorodinCorwinRemenik}
A.~Borodin, I.~Corwin, and D.~Remenik.
\newblock {Log-Gamma polymer free energy fluctuations via a Fredholm determinant identity}.
\newblock {\em Communications in Mathematical Physics}, 324(1):215--232, 2013.

\bibitem[BK24]{Borodin_Korotkikh_Inhom}
A.~Borodin and S.~Korotkikh.
\newblock Inhomogeneous spin $q${-Whittaker} polynomials.
\newblock {\em Annales de la Facult\'e des sciences de Toulouse : Math\'ematiques}, Ser. 6, 33(1):1--68, 2024.

\bibitem[BMP21]{BufetovMucciconiPetrov2018}
A.~Bufetov, M.~Mucciconi, and L.~Petrov.
\newblock {Yang-Baxter random fields and stochastic vertex models}.
\newblock {\em Advances in Mathematics}, 388:107865, 2021.

\bibitem[Bor17]{Borodin2014vertex}
A.~Borodin.
\newblock On a family of symmetric rational functions.
\newblock {\em Advances in Mathematics}, 306:973--1018, 2017.
\newblock arXiv:1410.0976 [math.CO].

\bibitem[BP18]{BorodinPetrov2016inhom}
A.~Borodin and L.~Petrov.
\newblock {Higher spin six vertex model and symmetric rational functions}.
\newblock {\em Selecta Mathematica}, 24(2):751--874, 2018.

\bibitem[Buc02]{Buch_Grothendieck}
A.S. Buch.
\newblock {A Littlewood–Richardson rule for the K-theory of Grassmannians}.
\newblock {\em Acta Math.}, 189(1):37–78, 2002.

\bibitem[Bum13]{Bump_Lie_Groups}
D.~Bump.
\newblock {\em Lie Groups}.
\newblock Graduate Texts in Mathematics. Springer New York, NY, 2013.

\bibitem[BW21]{BorodinWheelerSpinq}
A.~Borodin and M.~Wheeler.
\newblock {Spin $q$–Whittaker polynomials}.
\newblock {\em Advances in Mathematics}, 376:107449, 2021.

\bibitem[CC25]{CafassoClaeysBiorthogonal}
Mattia Cafasso and Tom Claeys.
\newblock Biorthogonal measures, polymer partition functions, and random matrices.
\newblock {\em Annales de l’Institut Henri Poincar{\'e} -- Probabilit{\'e}s et Statistiques}, 2025.
\newblock to appear.

\bibitem[Cor14]{Corwin2014qmunu}
I.~Corwin.
\newblock {The $q$-Hahn Boson process and $q$-Hahn TASEP}.
\newblock {\em International Mathematics Research Notices}, 2015:5577–5603, 2014.

\bibitem[CP21]{Chan_Pflueger_Grothendieck}
M.~Chan and N.~Pflueger.
\newblock Combinatorial relations on skew {Schur} and skew stable {Grothendieck} polynomials.
\newblock {\em Algebraic Combinatorics}, 4(1):175--188, 2021.

\bibitem[Eti99]{Etingof1999}
P.~Etingof.
\newblock {Whittaker functions on quantum groups and q-deformed Toda operators}.
\newblock {\em Amer. Math. Soc. Transl. Ser. 2}, 194:9--25, 1999.
\newblock arXiv:math/9901053 [math.QA].

\bibitem[FK94]{Fomin_Kirillov_Grothendieck}
Sergei Fomin and Anatol~N. Kirillov.
\newblock {Grothendieck polynomials and the Yang-Baxter equation}.
\newblock {\em Proc. formal power series and alg. comb}, page 183–190, 1994.

\bibitem[GdGW17]{deGierWheeler2016}
A.~Garbali, J.~de~Gier, and M.~Wheeler.
\newblock {A new generalisation of Macdonald polynomials}.
\newblock {\em Communications in Mathematical Physics}, 352(2):773--804, 2017.
\newblock arXiv:1605.07200 [math-ph].

\bibitem[Giv97]{givental1996stationary}
A.~Givental.
\newblock {Stationary phase integrals, quantum Toda lattices, flag manifolds and the mirror conjecture}.
\newblock In {\em Topics in singularity theory}, American Mathematical Society Translations Ser 2. AMS, 1997.
\newblock arXiv:alg-geom/9612001.

\bibitem[GLO10]{GLO2010}
A.~Gerasimov, D.~Lebedev, and S.~Oblezin.
\newblock {On $q$-deformed $\mathfrak{gl}_{\ell+1}$ Whittaker functions I, II, III}.
\newblock {\em Communications in Mathematical Physics}, 294:97--119, 121--143, 2010.

\bibitem[GLO12]{GerasimovLebedevOblezin2011}
A.~Gerasimov, D.~Lebedev, and S.~Oblezin.
\newblock {On a classical limit of $q$-deformed Whittaker functions}.
\newblock {\em {Letters in Mathematical Physics}}, 100(3):279--290, 2012.
\newblock arXiv:1101.4567 [math.AG].

\bibitem[GP24]{Gavrilova_Petrov_Tilted}
S.~Gavrilova and L.~Petrov.
\newblock {Tilted biorthogonal ensembles, Grothendieck random partitions, and determinantal tests}.
\newblock {\em Selecta Mathematica}, 30, 2024.

\bibitem[HJK{\etalchar{+}}24]{Hwang_et_al_Grothendieck_combinatorial}
B.-H. Hwang, J.~Jang, J.~S. Kim, M.~Song, and U-K. Song.
\newblock {Refined canonical stable Grothendieck polynomials and their duals, Part 2}.
\newblock {\em arXiv preprint}, 2024.
\newblock arXiv:2404.02483 [math.CO].

\bibitem[IMS20]{imamura2019stationary}
T.~Imamura, M.~Mucciconi, and T.~Sasamoto.
\newblock {Stationary Higher Spin Six Vertex Model and $ q $-Whittaker measure}.
\newblock {\em Probability Theory and Related Fields}, 03 2020.

\bibitem[IMS24]{IMS_matching}
T.~Imamura, M.~Mucciconi, and T.~Sasamoto.
\newblock {Identity between restricted Cauchy sums for the $q$-Whittaker and skew Schur polynomials}.
\newblock {\em SIGMA}, 20(064), 2024.

\bibitem[IMS25]{IMS_KPZ_free_fermions}
T.~Imamura, M.~Mucciconi, and T.~Sasamoto.
\newblock {Solvable models in the KPZ class: approach through periodic and free boundary Schur measure}.
\newblock {\em Annals of Probability}, 2025.
\newblock To appear.

\bibitem[IS19]{imamura2017fluctuations}
T.~Imamura and T.~Sasamoto.
\newblock {Fluctuations for stationary q-TASEP}.
\newblock {\em Probability Theory and Related Fields}, 174:647--730, 2019.

\bibitem[Jac67]{Jacquet1967}
H.~Jacquet.
\newblock {Fonctions de Whittaker associ\'ees aux groupes de Chevalley}.
\newblock {\em Bulletin de la Soci\'et\'e Math\'ematique de France}, 95:243--309, 1967.

\bibitem[KL01]{Kharchev_2001}
S.~Kharchev and D.~Lebedev.
\newblock Integral representations for the eigenfunctions of quantum open and periodic toda chains from the {QISM} formalism.
\newblock {\em Journal of Physics A: Mathematical and General}, 34(11):2247--2258, 03 2001.

\bibitem[Kor22]{Korotkikh_Hidden}
S.~Korotkikh.
\newblock {Hidden diagonal integrability of $q$-Hahn vertex model and Beta polymer model}.
\newblock {\em Probability Theory and Related Fields}, 184:493--570, 2022.

\bibitem[Kor24]{Korotkikh2024}
S.~Korotkikh.
\newblock {Representation theoretic interpretation and interpolation properties of inhomogeneous spin q-Whittaker polynomials}.
\newblock {\em Selecta Mathematica}, 30(3):40, 2024.

\bibitem[Kos78]{Kostant_Whittaker}
B.~Kostant.
\newblock {On Whittaker vectors and representation theory}.
\newblock {\em Inventiones mathematicae}, 48(2):101--184, 1978.

\bibitem[LS82]{Lascoux_Schutzemberger_Grothendieck}
A.~Lascoux and M.-P. Sch{\"u}tzenberger.
\newblock {Structure de Hopf de l’anneau de cohomologie et de l’anneau de Grothendieck d’une vari{\'e}t{\'e} de drapeaux}.
\newblock {\em C. R. Acad. Sci. Paris S{\'e}r. I Math.}, 295(11):629–633, 1982.

\bibitem[Mac95]{Macdonald1995}
I.G. Macdonald.
\newblock {\em Symmetric functions and {H}all polynomials}.
\newblock Oxford University Press, 2nd edition, 1995.

\bibitem[MP22]{MucciconiPetrov2020}
M.~Mucciconi and L.~Petrov.
\newblock {Spin $q$-Whitaker polynomials and deformed quantum Toda}.
\newblock {\em Communications in Mathematical Physics}, 389:1331--1416, Feb 2022.

\bibitem[MS13]{Motegi_Sakai_2013}
K.~Motegi and K.~Sakai.
\newblock {Vertex models, TASEP and Grothendieck polynomials}.
\newblock {\em Journal of Physics A: Mathematical and Theoretical}, 46(35):355201, aug 2013.

\bibitem[Pov13]{Povolotsky2013}
A.~Povolotsky.
\newblock {On integrability of zero-range chipping models with factorized steady state}.
\newblock {\em J. Phys. A}, 46, 2013.
\newblock arXiv:1308.3250 [math-ph].

\bibitem[Rui90]{Ruijsenaars_relativistic}
S.~N.~M. Ruijsenaars.
\newblock {Relativistic Toda systems}.
\newblock {\em Communications in Mathematical Physics}, 133:217--247, 1990.

\bibitem[Skl85]{Sklyanin_The_quantum_toda_Chain}
E.K. Sklyanin.
\newblock {The quantum Toda chain}.
\newblock {\em Lecture Notes in Physics}, 226:196--233, 1985.

\bibitem[STS94]{semenov1994quantization}
M.~Semenov-Tian-Shansky.
\newblock {Quantization of open Toda lattices}.
\newblock In V.~Arnold and S.~Novikov, editors, {\em Dynamical Systems VII: Integrable Systems, Nonholonomic Dynamical Systems}, volume~16 of {\em Encyclopaedia of Mathematical Sciences}, pages 226--259. Springer, 1994.

\bibitem[STS23]{Semenov-Tian-Shansky_2023}
M.~Semenov-Tian-Shansky.
\newblock {Quantum Toda lattice: a challenge for representation theory}.
\newblock {\em Journal of Physics: Conference Series}, 2667(1):012057, dec 2023.

\bibitem[TLD15]{thieryLD2015integrable}
T.~Thiery and P.~Le~Doussal.
\newblock On integrable directed polymer models on the square lattice.
\newblock {\em Jour. Phys. A}, 48(46):465001, 2015.
\newblock arXiv:1506.05006 [cond-mat.dis-nn].

\bibitem[Yel17]{yeliussizov2015duality}
D.~Yeliussizov.
\newblock {Duality and deformations of stable Grothendieck polynomials}.
\newblock {\em Jour. Alg. Comb.}, 45(1):295--344, 2017.
\newblock arXiv:1601.01581 [math.CO].

\end{thebibliography}

\end{document}